\documentclass[a4paper,12pt,reqno]{amsart}
\usepackage{amssymb}
\usepackage{mathtools}
\usepackage[english]{babel}
\usepackage{epsfig}
\setlength{\textheight}{20cm} \textwidth16cm \hoffset=-2truecm

\usepackage{xcolor}

\usepackage{mathptmx}
\usepackage{amsmath,amsfonts,amssymb}
\usepackage{mathtools}
\usepackage{mathrsfs}
\usepackage[all]{xy}
\usepackage{graphicx}
\usepackage{latexsym}
\usepackage{verbatim}
\usepackage[pagewise]{lineno}

\numberwithin{equation}{section}

\newcommand{\C}{\mathbb C}

\newcommand{\Aut}{{\sf Aut}(\mathbb D)}

\newcommand{\bl}{\bold}

\newcommand{\overbar}[1]{\mkern 1.5mu\overline{\mkern-1.5mu#1\mkern-1.5mu}\mkern 1.5mu}







\def\id{{\sf id}}

\def\Aut{{\sf Aut}}

\def\1#1{\overline{#1}}
\def\2#1{\widetilde{#1}}
\def\3#1{\widehat{#1}}
\def\4#1{\mathbb{#1}}
\def\5#1{\frak{#1}}
\def\6#1{{\mathcal{#1}}}
\usepackage{enumitem}

\newcommand{\mcite}[1]{\csname b@#1\endcsname}

\theoremstyle{plain}

\setcounter {result} {64}






\def\id{{\sf id}}

\def\Aut{{\sf Aut}}

\def\Fix{\operatorname{Fix}}
\def\Deck{\operatorname{Deck}}

\def\mb{\mathbb}

\emergencystretch15pt \frenchspacing

\newtheorem{theorem}{Theorem}[section]
\newtheorem{lemma}[theorem]{Lemma}
\newtheorem{proposition}[theorem]{Proposition}
\newtheorem{corollary}[theorem]{Corollary}

\theoremstyle{definition}
\newtheorem{definition}[theorem]{Definition}

\theoremstyle{remark}
\newtheorem{remark}[theorem]{Remark}

\numberwithin{equation}{section}

\title[$2$-proper holomorphic images and Cartan Domains]{$2$-proper holomorphic images of classical Cartan domains}

\author[G. Ghosh]{Gargi Ghosh$^\dag$}
\address{G. Ghosh: Silesian University in Opava, 746 01, Czech Republic} 
\curraddr{Institute of Mathematics, Faculty of Mathematics and Computer Science, Jagiellonian University, \L ojasiewicza 6, 30-348 Krak\'ow, Poland}\email{gargi.ghosh@uj.edu.pl}

\author[W. Zwonek]{W\l odzimierz Zwonek}
\address{W. Zwonek: Institute of Mathematics, Faculty of Mathematics and Computer Science, Jagiellonian University, \L ojasiewicza 6, 30-348 Krak\'ow, Poland} \email{wlodzimierz.zwonek@uj.edu.pl}

\subjclass[2020]{Primary: 32H35, 32M15; Secondary: 32A25, 32Q02}
\keywords{Proper holomorphic maps, deck automorphisms, holomorphic automorphisms, reflections, bounded symmetric domains, Cartan domains, symmetrized bidisc, tetrablock, Lu Qi-Keng domain}

\thanks{\textcolor{black}{This research is part of the project No. 2022/45/P/ST1/01028 co-funded by the
National Science Centre and the European Union Framework Programme for Research
and Innovation Horizon 2020 under the Marie Skłodowska-Curie grant agreement No.
945339.} $^\dag$Partially supported by postdocotoral fellowship from Silesian University in Opava under GA CR grant no. 21-27941S. }


\long\def\REM#1{\relax}

\begin{document}
\maketitle

\selectlanguage{english}
\begin{abstract} 

Motivated by the way two special domains, namely the symmetrized bidisc and the tetrablock, could be defined as the images of $2$-proper holomorphic images of classical Cartan domains, we present a general approach to study $2$-proper holomorphic images of bounded symmetric domains. We show some special properties of $2$-proper holomorphic maps (such as the construction of some \textcolor{black}{involutive} automorphisms etc.) and enlist  possible domains (up to biholomorphisms) which arise as $2$-proper holomorphic images of bounded symmetric domains. This leads us to a consideration of a new family of domains $\mathbb L_n$ for $n\geq 2.$ Let $L_n$ be an irreducible classical Cartan domain of type $IV$ (Lie ball) of dimension $n$ and $\Lambda_n : L_n \to \Lambda_n (L_n):=\mathbb L_n$ be the natural  proper holomorphic mapping of multiplicity $2.$ It turns out that $\mathbb L_2$  and $\mathbb L_3$ are biholomorphic to the symmetrized bidisc and the tetrablock, respectively. In this article, we study function geometric properties of the family $\{\mathbb L_n : n \geq 2\}$ in a unified manner and thus extend results of many earlier papers on analogous properties of the symmetrized bidisc and the tetrablock. We show that $\mathbb L_n$ cannot be exhausted by domains biholomorhic to some convex domains. Any proper holomorphic self-mapping of $\mathbb L_n$ is an automorphism for $n \geq 3.$ Moreover, the automorphism group $\Aut(\mathbb L_n)$ is isomorphic to $\Aut( L_{n-1})$ and $\mathbb L_n$ is inhomogeneous for \textcolor{black}{$n\geq2$}. Additionally, we prove that $\mathbb L_n$ is not a Lu Qi-Keng domain for $n \geq 3.$

\end{abstract}

\section{Introduction} 
Let $D_i, \, i=1,2,$ be two domains in $\mb C^n$ and $\pi : D_1 \to D_2$ be a proper holomorphic mapping of multiplicity $m.$ Our quest of understanding the geometric structure of $D_2$ for `nice' convex $D_1$ begins for $m=2$ case. A simple yet significant observation is that a proper holomorphic mapping with $m=2$ is always a Galois covering (in appropriate sense). However, the situation changes drastically even for $m=3,$ for example, there exists a Blaschke product of multiplicity $3$ from unit disc to itself which is not Galois. 

We start with a bounded symmetric domain $D$ in $\mb C^n.$ Suppose that $\pi:D\to \pi(D)\subset\mathbb C^n$ with $\pi(D)$ being a domain, is a proper holomorphic mapping with multiplicity $2.$ Then as a consequence of \cite[Theorem 3]{Got 1969}  and \cite{Mes 1972} we provide a list of possible $\pi(D)$ (up to biholomorphisms). 
Apart from the symmetrized bidisc, the tetrablock and the complex ellipsoids, that list contains a not so well-studied class of domains, say $\mathbb L_n.$ In this article, we focus on function geometric properties of $\mathbb L_n$ which extend our understanding of proper holomorphic mappings of multiplicity $2$. 

Let $L_n$
 be a classical Cartan domain of type IV in $\mathbb C^n$ and $\Lambda_n : L_n \to \Lambda_n (L_n):= \mathbb L_n$ be a proper holomorphic mapping of multiplicity $2$ defined by
 \begin{eqnarray*}
     \Lambda_n(z_1,z_2,\ldots,z_n)=(z_1^2,z_2,\ldots,z_n).
\end{eqnarray*} 
We note that $\mathbb L_2$ and $\mathbb L_3$ are biholomorphic to the symmetrized bidisc $\mathbb G_2$ and the tetrablock $\mathbb E,$ respectively (compare \cite{Sie 1950}, \cite[Remark, p. 139]{Coh-Col 1994}, \cite[Lemma 3]{Coh-Col 1994} and see Corollary \ref{bimap2} for explicit formulae of such biholomorphisms). Originating from $\mu$-synthesis, the domains $\mb G_2$ and $\mb E$ turn out to be topics of intense research in geometric function theory in their own right, for instance, these hyperconvex domains are first two examples such that none of those is biholomorphically equivalent to any convex domain but Lempert theorem holds on both domains \cite{Agl-You 2004, Edi 2004, Cos 2004, Edi-Kos-Zwo 2013}. 
Recently, an extensive study on both domains has been carried out in operator theory as well, to mention a few, see \cite{Bha-Pal-Roy 2012, Agl-You 2000, Agl-You 2001, Bis-Shy 2013, Agl-You 2003}  and the references therein. 
Therefore, in order to put our main results in the right perspective we describe the basic function geometric properties of $\mathbb L_n$ in a unified manner for all $n\geq 2$.
\begin{enumerate}[leftmargin=*]
    \item[1.] It is proved that $\mathbb L_n$ cannot be exhausted by domains biholomorhic to some convex domains in \cite{Edi 2004} for $n=2$ and in \cite{Edi-Kos-Zwo 2013} for $n=3.$ We show in Theorem \ref{convex} that it is true for all $\mathbb L_n$ for $n\geq 2.$
    \item[2.] In \cite{Ale 1977}, Alexander showed that a proper holomorphic self-mapping of the unit ball in $\mb C^n$ is an automorphism for $n>1.$ We say that a domain has {\it Alexander type property } if it has no proper holomorphic self-mapping except automorphisms. Unlike $\mathbb L_2,$ the domain $\mathbb L_3$ has Alexander type property \cite{Edi-Zwo 2005, Kos 2011}. In Proposition \ref{alexander}, we prove that $\mathbb L_n$ has Alexander type property for $n\geq 3.$
    \item[3.] Any biholomorphic self-mapping of $\mb G_2$ (equivalently, of $\mathbb L_2$) can be determined by an automorphism of the locus set of the symmetrization map (which is biholomorphic to the unit disc) \cite{Edi-Zwo 2005, Edi 2004b}. A similar observation is made for the tetrablock (equivalently, for $\mathbb L_3$) in \cite{You 2008}. We show that an analogous phenomenon holds for $\mathbb L_n$ for all $n\geq 2$. To determine the group of automorphisms of $\mathbb L_n,$ denoted by $\Aut(\mathbb L_n)$, we first construct automorphisms of $\mathbb L_n$ extending the ones of the locus set $\{0\}\times L_{n-1}$ of $\Lambda_n$ and then we show that there are no other automorphisms. Thus Theorem \ref{theorem:automorphisms} states the following: 
    $$\Aut(L_{n-1}) \cong \Aut(\mathbb L_n),\,\,n\geq 2.$$
    Therefore, the explicit description of $\Aut(L_{n-1})$ that can be found in \cite{Hua 1963} gives a complete form of $\Aut(\mathbb L_n)$.
    \item[4.] If the Bergman kernel $K_D$ of the domain $D\subset\mathbb C^n$ has no zero in $D\times D$, we say the domain $D$ is a {\it Lu Qi-Keng domain} \cite{Boa 2000}. While $\mathbb L_2$ is a Lu Qi-Keng domain \cite{Edi-Zwo 2005}, Trybula showed that the result is otherwise for $n= 3$ \cite{Try 2013}. We further show in Proposition \ref{Lu Qi-Keng} that $\mb L_n$ is not a Lu Qi-Keng domain for $n\geq 3.$
\end{enumerate}  

We emphasize that the approach adopted by us is much simpler and yet much more general. Moreover, it shows that the tedious work conducted in many papers for the cases $n=2,3$ can be obtained in a unified manner for arbitrary dimensions. Also, this approach leads us to numerous open questions for further exploration. 
\textcolor{black}{For instance, after we put this article on arXiv, A. Edigarian showed in \cite{Edi 2023} that the domain $\mathbb L_n$ is always a Lempert domain, which is earlier proven for the tetrablock and the symmetrized bidisc (that is the cases $n=2 $ and $3$). Another point of interest is to establish a relation between the equality of invariant functions on $\mathbb L_n$ and the problem of the complete understanding of the $3$-point Nevanlinna-Pick problem in classical Cartan domains (or at least in irreducible classical Cartan domains of type IV)}. The motivation for preceding problem comes from  \cite{Kos 2015} and \cite{Kos-Zwo 2018} where $3$-point Nevanlinna-Pick problem for the unit polydisc and the unit Euclidean ball are discussed, respectively.  Moreover, since the first attitude towards studying of $\mathbb G_2$ and $\mathbb E$ comes from $\mu$-synthesis, it is desirable to find the roots for studying the domains $\mathbb L_n$ for $n\geq 2$ in the very same theory. Lastly, the unified approach of this article can be reenacted to study operators on function spaces on $\mathbb G_2$ and $\mathbb E$ as well. Therefore, we expect that the operator theory on domains $\mathbb L_n$ play a pervasive role in that direction. 

\section{Proper holomorphic mappings with multiplicity two}
\subsection{Preliminaries}
Let $D$ and $G$ be two domains in $\mathbb C^n.$ A holomorphic map $\pi:D\to G$ is said to be {\it proper} if $\pi^{-1}(K)$ is a compact subset of $D$ for every compact $K\subset G.$ We start by recalling some basic properties of proper holomorphic mappings which are of our interest. 
\begin{theorem}\cite[Chapter 15]{Rud 1980} Let $\pi:D\to G$ be a proper holomorphic mapping. Then 
\begin{enumerate}
    \item[(i)] $\pi$ is onto and
    \item[(ii)] there exists a positive integer $m$ such that $\pi:D\setminus\pi^{-1}(\pi(\mathcal J_\pi))\to G\setminus\pi(\mathcal J_\pi)$  is a holomorphic covering with
\begin{align*}
\text{cardinality of }\pi^{-1}(w)&=m,\; \,\,w\in G\setminus\pi(\mathcal J_\pi) \text{ and, }\\
\text{cardinality of }\pi^{-1}(w)&<m,\; \,\,w\in\pi(\mathcal J_\pi),
\end{align*}
where $\mathcal J_\pi:=\{z\in D:\operatorname{det}\pi^{\prime}(z)=0\}.$
\end{enumerate} 
\end{theorem}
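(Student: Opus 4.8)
The plan is to establish finiteness of the fibers first, then openness, deduce surjectivity, and finally analyze the covering and branching structure. First I would show that each fiber $\pi^{-1}(w)$, $w\in G$, is a compact analytic subset of $D$: compactness is immediate from properness, since it is the preimage of the compact set $\{w\}$, and it is the common zero set of the holomorphic functions $z\mapsto \pi_j(z)-w_j$. A compact analytic subset of a domain in $\C^n$ must be finite, because on a positive-dimensional compact component the coordinate functions would attain interior maxima and hence be constant, which is impossible. Thus $\pi$ is finite-to-one. In particular $\det\pi'$ cannot vanish identically: if it did, the generic rank of $\pi$ would be $<n$, forcing the image to lie locally in a lower-dimensional analytic set and the fibers to be positive-dimensional, contradicting finiteness.

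For surjectivity (part (i)) I would argue that $\pi(D)$ is both closed and open in $G$. Closedness follows from the general fact that proper continuous maps into locally compact Hausdorff spaces are closed. Openness follows from the open mapping theorem for finite holomorphic maps (equivalently Remmert's open mapping theorem): a proper holomorphic map with finite fibers between equidimensional complex spaces is open. Since $G$ is connected and $\pi(D)$ is a nonempty subset that is simultaneously open and closed, $\pi(D)=G$.

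For part (ii) I would set $\mathcal J_\pi=\{\det\pi'=0\}$, a proper analytic subset of $D$ (proper since $\det\pi'\not\equiv0$), and observe by Remmert's proper mapping theorem that the branch locus $\pi(\mathcal J_\pi)$ is analytic in $G$. Writing $G'=G\setminus\pi(\mathcal J_\pi)$ and $D'=\pi^{-1}(G')$, on $D'$ the Jacobian is nonvanishing, so by the inverse function theorem $\pi|_{D'}$ is a local biholomorphism; being moreover proper onto the connected set $G'$ (the complement of an analytic subset of a domain is connected), it is a covering map of some constant finite multiplicity $m\geq1$, which yields the first cardinality assertion.

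The part I expect to require the most care — and the main obstacle — is the strict inequality over the branch locus. The key input is the conservation of multiplicity for finite holomorphic maps: the degree $m$ equals $\sum_{z\in\pi^{-1}(w)}\mu_z(\pi)$ for every $w\in G$, where $\mu_z(\pi)=\dim_{\C}\mathcal O_z/\pi^\ast\mathfrak m_{\pi(z)}$ is the local multiplicity. Over a regular value each local multiplicity is $1$, recovering $m$ distinct preimages. Over $w\in\pi(\mathcal J_\pi)$ there is a point $z_0\in\pi^{-1}(w)\cap\mathcal J_\pi$, and since $\det\pi'(z_0)=0$ is equivalent to $\mu_{z_0}(\pi)\geq2$ (a finite germ has multiplicity one precisely when it is a local biholomorphism), the number of distinct points in $\pi^{-1}(w)$ is strictly smaller than $\sum_z\mu_z(\pi)=m$. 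Establishing the conservation-of-multiplicity identity cleanly, for instance via the local normal form of finite maps or a Weierstrass-preparation argument, is where the genuine work lies; the remaining steps are an assembly of standard facts.
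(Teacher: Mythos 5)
This statement is not proved in the paper at all: it is quoted as background, with the proof delegated to \cite{Rud 1980}, Chapter 15. So there is no internal argument to compare against, and your proposal has to be measured against the standard treatment (Rudin's). On that score your outline is correct and follows the classical line for most of its length: finiteness of fibers via compact analytic sets and the maximum principle, $\det\pi'\not\equiv 0$ via the rank theorem, surjectivity by closedness (properness) plus openness (finite equidimensional holomorphic maps are open), analyticity of $\pi(\mathcal J_\pi)$ by Remmert, and the unbranched covering structure from ``proper local biholomorphism onto a connected base.'' Where you genuinely diverge from Rudin is the strict inequality over $\pi(\mathcal J_\pi)$: you invoke conservation of multiplicity, $m=\sum_{z\in\pi^{-1}(w)}\mu_z(\pi)$ with $\mu_z(\pi)=\dim_{\C}\mathcal O_z/\pi^*\mathfrak m_{\pi(z)}$, together with the equivalence $\det\pi'(z_0)=0\iff\mu_{z_0}(\pi)\geq 2$; this is valid, but it imports the algebraic theory of finite map germs (Weierstrass preparation, local normal forms), which is exactly the part you leave unproved. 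Rudin's own argument is more elementary and avoids multiplicities altogether: for $w_0\in\pi(\mathcal J_\pi)$ with $\pi^{-1}(w_0)=\{z_1,\dots,z_k\}$, choose disjoint neighborhoods $U_i\ni z_i$; properness forces all fibers over a small neighborhood of $w_0$ into $\bigcup_i U_i$, openness forces each $U_i$ to meet every such fiber, and since some $z_i$ is a critical point, $\pi$ cannot be injective near $z_i$ (an injective holomorphic map between equidimensional domains has nonvanishing Jacobian), so a nearby regular value acquires at least $k+1$ preimages, whence $k<m$. Both routes work: yours buys the stronger degree identity with multiplicities, which is useful elsewhere (e.g.\ in the Bergman-kernel transformation formulas the paper uses later), while Rudin's buys a self-contained elementary proof.
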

We refer to $m$ as {\it the multiplicity of $\pi$} and the mapping $\pi$ as {\it $m$-proper} holomorphic mapping. The set $\mathcal J_\pi$ is said to be {\it the locus set of $\pi.$} The sets $\mathcal J_\pi$ and $\pi(\mathcal J_\pi)$ are analytic sets of dimension smaller than $n$. The points of $\pi(\mathcal J_\pi)$ are called {\it critical values of $\pi$} and the points of $G\setminus\pi(\mathcal J_\pi)$ are called {\it regular values of $\pi$}.
 
\subsubsection{Deck automorphisms and factorization}

We first introduce a notion of isomorphism between two proper holomorphic mappings.
\begin{definition}\label{iso}
We say that two proper holomorphic mappings $\pi_j:D_j\to G_j$, $j=1,2$ are {\it isomorphic} if there exist two biholomorphisms $a:D_1\to D_2$ and $b:G_1\to G_2$ such that $$b\circ \pi_1=\pi_2\circ a.$$ 
\end{definition}
For instance, for every $a\in\Aut(D),$ the proper holomorphic mapping $\pi\circ a:D\to G$ is isomorphic to the proper holomorphic mapping $\pi:D\to G.$

Suppose that $a:D_1\to D_2$ is a fiber-preserving biholomorphism, that is, for every $z\in G_1$ there is a $w\in G_2$ such that $a(\pi_1^{-1}(z))\subset\pi_2^{-1}(w)$, then we get the existence of a biholomorphism $b:G_1\to G_2$ which satisfies the condition in Definition \ref{iso}. In fact, the mapping $b$ can be well-defined by $b(z):=w$. The properties of proper holomorphic mappings allow us to deduce that $b$ is continuous and holomorphic off the set of critical values of $\pi_1$. Riemann removability theorem assures that $b$ is holomorphic on $G_1$. Then the \textcolor{black}{biholomorphicity} of $b$ is evident.

Recall that a holomorphic automorphism $\rho$ of $D$ is called a {\it deck automorphism} of the proper holomorphic mapping $\pi:D\to G$ if $\pi\circ \rho=\pi$. If the group of deck automorphisms $\operatorname{Deck}(\pi)$ acts transitively on $\pi^{-1}(w)$ for every $w\in G$ then the proper holomorphic mapping $\pi$ is called {\it Galois} \cite{Sza 2009}. Suppose that $\rho$ is a deck automorphism of $\pi,$ then the conjugate $a^{-1}\circ\rho\circ a$ is a deck automorphism of $\pi\circ a,$ $a\in\Aut(D).$


Below we formulate a simple yet significant result for $2$-proper holomorphic mappings.

\begin{proposition}\label{proposition:idempotent-automorphism} Suppose that $\pi:D\to G$ is a $2$-proper holomorphic mapping.
Then there exists an element $g\in\Aut(D),$ $g \neq \id,$ such that $g\circ g = \id,$ where $\id$ denotes
the identity of $\Aut(D).$ Moreover, $g|_{\mathcal J_\pi}$ is the identity.
\end{proposition}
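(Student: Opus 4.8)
The plan is to construct the nontrivial involution $g$ directly from the deck-transformation structure guaranteed by the fact that a $2$-proper map is automatically Galois. Recall from the remark following the definition of Galois maps that for multiplicity $2$ the covering $\pi: D\setminus\pi^{-1}(\pi(\mathcal J_\pi))\to G\setminus\pi(\mathcal J_\pi)$ is a two-sheeted covering, and any connected two-sheeted covering is normal (Galois). First I would fix a regular value $w_0$ and a point $p_0\in\pi^{-1}(w_0)$; the other preimage is some $q_0\neq p_0$, and the unique nontrivial deck transformation of the covering over $G\setminus\pi(\mathcal J_\pi)$ is the map $g_0$ that swaps the two sheets, i.e.\ locally interchanges $p_0\leftrightarrow q_0$ and satisfies $\pi\circ g_0=\pi$. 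Because the fibres have exactly two points off the branch locus, $g_0$ is well-defined as a biholomorphism of $D\setminus\pi^{-1}(\pi(\mathcal J_\pi))$, and it is an involution: $g_0\circ g_0$ is again a deck transformation fixing each sheet, hence the identity on the (connected) covering.

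Next I would extend $g_0$ across the analytic set $\pi^{-1}(\pi(\mathcal J_\pi))$ to obtain an element $g\in\Aut(D)$. Since $\pi^{-1}(\pi(\mathcal J_\pi))$ is an analytic subset of $D$ of dimension strictly smaller than $n$, and $g_0$ is a bounded holomorphic map (it maps into the bounded domain $D$), the Riemann removable singularity theorem applies componentwise and produces a holomorphic extension $g:D\to D$. The identity $\pi\circ g=\pi$ and the involutivity $g\circ g=\id$ persist on $D$ by continuity, since both sides are holomorphic and agree on the dense open set $D\setminus\pi^{-1}(\pi(\mathcal J_\pi))$. From $g\circ g=\id$ it follows that $g$ is its own inverse, so $g$ is in fact a biholomorphism of $D$, i.e.\ $g\in\Aut(D)$; and $g\neq\id$ because it swaps the two points of a generic fibre. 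It remains to verify the final assertion that $g$ restricts to the identity on the locus set $\mathcal J_\pi$. Here I would argue as follows: for $z\in\mathcal J_\pi$ with $\det\pi'(z)=0$, the involution $g$ fixes the fibre $\pi^{-1}(\pi(z))$ setwise; differentiating $\pi\circ g=\pi$ at a fixed point shows $\pi'(g(z))\,g'(z)=\pi'(z)$, and combining this with the branching behaviour of $\pi$ along $\mathcal J_\pi$ forces $g(z)=z$ on the branch locus, since a point of $\mathcal J_\pi$ has fibre of cardinality one and $g$ must permute the one-point fibre trivially.

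The main obstacle I anticipate is precisely this last step of showing $g|_{\mathcal J_\pi}=\id$, together with the subtle point that $g$ genuinely extends \emph{holomorphically} rather than just continuously across the branch locus. For the extension, the fact that $\mathcal J_\pi$ and $\pi(\mathcal J_\pi)$ are analytic of codimension at least one (as recorded in the preliminaries) is exactly what makes Riemann removability applicable, so this is routine once phrased correctly. For the fixed-point claim, the cleanest route is the cardinality observation: a point $w\in\pi(\mathcal J_\pi)$ has $\pi^{-1}(w)$ of cardinality strictly less than $2$, hence exactly $1$ by properness and surjectivity; since $g$ permutes this single-point fibre, it must fix that point. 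As $\mathcal J_\pi=\pi^{-1}(\pi(\mathcal J_\pi))\cap\mathcal J_\pi$ and every point of $\mathcal J_\pi$ lies in a one-point fibre (the two branches of the covering coalesce there), we conclude $g(z)=z$ for all $z\in\mathcal J_\pi$. I would double-check that the one-point-fibre argument covers all of $\mathcal J_\pi$ and not merely a generic subset, using that along $\mathcal J_\pi$ the two sheets are forced to meet; this is where the multiplicity being exactly $2$ is essential, since for higher multiplicity the branch locus can carry fibres of intermediate cardinality and the swap need not fix it pointwise.
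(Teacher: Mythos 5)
Your proposal is correct and follows essentially the same route as the paper: define the fiber-swap involution off the branch locus, extend it across $\mathcal J_\pi$ by Riemann removability, and use the fact that fibers over $\pi(\mathcal J_\pi)$ are singletons (cardinality strictly less than $2$ plus surjectivity) to conclude $g|_{\mathcal J_\pi}=\id$. The only cosmetic differences are that you justify holomorphy of the swap via covering-space theory and invoke the locally-bounded form of the Riemann extension theorem --- where local boundedness near $\mathcal J_\pi$ should be deduced from properness of $\pi$ (preimages of compact neighborhoods are compact) rather than from boundedness of $D$, which the proposition does not assume --- whereas the paper defines $g$ to be the identity on $\mathcal J_\pi$ and uses the continuous form of removability.
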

\begin{proof} 
First note that for a $2$-proper holomorphic mapping, $\pi^{-1}(\pi(\mathcal J_\pi)) = \mathcal J_\pi.$  For a regular value $w\in G,$ the set $\pi^{-1}(w)$ consists of two elements, say $\pi^{-1}(w)=\{z_1,z_2\}.$ We define $g : D\setminus \mathcal J_\pi \to D\setminus \mathcal J_\pi$ by $g(z_j):=z_{3-j}$, $j=1,2.$ This formula gives a well-defined holomorphic self-mapping on the complement of the analytic set $\mathcal J_\pi$ in $D.$ For $w\in\pi(\mathcal J_\pi),$ we have $\pi^{-1}(w)=\{z\}.$ We define $g$ on $\mathcal J_\pi$ as the identity. Then $g$ is holomorphic off $\mathcal J_\pi$ and continuous on $D$. Riemann removability theorem concludes that $g$ is holomorphic on $D$. From the definition, it follows that $g\circ g=\id$ which finishes the proof.
\end{proof}
Thus, the existence of $2$-proper holomorphic mapping on $D$ ensures the existence of a non-trivial automorphism group $\Aut(D)$ of the domain $D.$ Clearly, $g$ is a deck automorphism of $\pi$ and it transitively acts on each fibre. Hence, we have the following result. 
\begin{corollary}\label{galois} The $2$-proper holomorphic mapping $\pi : D \to G$ is Galois, $\Deck(\pi)=\{\id,g\}$ and $D/\operatorname{Deck}(\pi)$ is biholomorphic to the domain $G$.
\end{corollary}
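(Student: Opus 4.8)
The plan is to exploit Proposition \ref{proposition:idempotent-automorphism}, which already produced a non-trivial involution $g\in\Aut(D)$ with $\pi\circ g=\pi$. First I would observe that $g$ is by construction a deck automorphism: for a regular value $w$ with $\pi^{-1}(w)=\{z_1,z_2\}$ we have $g$ swapping $z_1\leftrightarrow z_2$, so $\pi(g(z_j))=\pi(z_{3-j})=w=\pi(z_j)$ off $\mathcal J_\pi$, and by continuity (Riemann removability, already invoked in the Proposition) the identity $\pi\circ g=\pi$ holds on all of $D$. Hence $g\in\Deck(\pi)$, and since $g\neq\id$, the group $\Deck(\pi)$ contains at least $\{\id,g\}$.

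Next I would argue that $\Deck(\pi)$ contains nothing more. Any deck automorphism $\rho$ must permute each fibre $\pi^{-1}(w)$; over a regular value this fibre has exactly two points, so $\rho$ restricted to that fibre is either the identity swap or the transposition. If $\rho$ fixes both points of $\pi^{-1}(w)$ for some regular $w$, then $\rho$ agrees with $\id$ on a non-empty open set (regular values form an open dense set and $\rho$ is continuous), whence $\rho=\id$ by the identity theorem. Otherwise $\rho$ transposes the fibre, so $\rho$ agrees with $g$ on that open set and $\rho=g$. This dichotomy gives $\Deck(\pi)=\{\id,g\}$. Transitivity on each fibre is then immediate: over a regular value $g$ sends one point to the other, and over a critical value the fibre is a single point on which $\id$ acts transitively trivially; thus $\pi$ is Galois.

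Finally, for the quotient statement I would set up the natural biholomorphism between $D/\Deck(\pi)$ and $G$. Since every fibre of $\pi$ is exactly an orbit of $\Deck(\pi)$ (this is precisely the Galois property just established), $\pi$ descends to a well-defined bijection $\tilde\pi:D/\Deck(\pi)\to G$ on the orbit space. The quotient map $D\to D/\Deck(\pi)$ is a finite branched covering, so $D/\Deck(\pi)$ carries a natural complex structure making this projection holomorphic; $\tilde\pi$ is then holomorphic because $\pi$ is, and its inverse is holomorphic by the same Riemann-removability argument used earlier (it is continuous, and holomorphic off the critical values, which form an analytic set of dimension $<n$). I expect the only genuinely delicate point to be the justification that the orbit space $D/\Deck(\pi)$ inherits a bona fide complex-manifold (or normal-complex-space) structure for which the statement ``biholomorphic'' is meaningful; here one invokes the standard theory of quotients by finite groups of automorphisms, and the simplicity of the situation (an order-$2$ group whose fixed-point set is the analytic set $\mathcal J_\pi$) keeps this routine rather than obstructive.
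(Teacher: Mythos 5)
Your proposal is correct and follows essentially the same route as the paper: the paper deduces the corollary immediately from Proposition \ref{proposition:idempotent-automorphism}, noting that the involution $g$ constructed there is a deck automorphism of $\pi$ acting transitively on each fibre. The extra details you supply --- the identity-theorem argument showing $\Deck(\pi)$ contains nothing beyond $\{\id,g\}$, and the verification that the induced bijection $D/\Deck(\pi)\to G$ is a biholomorphism --- are precisely the routine checks the paper leaves implicit, and they are carried out correctly.
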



\begin{remark} Recall that if the multiplicity $m>2,$ the proper holomorphic mapping is not necessarily Galois. For example, in \cite[Section 4.7, p. 711]{Rud 1982} a Blaschke product of degree $3$ is described which is not Galois. Also, in \cite[Example, p. 223]{Dini-Sel-Pri 1991} an example in the class of ellipsoids is provided for the same. On the other hand, under mild assumptions all the proper holomorphic mappings defined on the unit ball $\mathbb B_n$, $n>1$, are Galois \cite[Theorem 1.6]{Rud 1982}. Later, this result of Rudin is extended to the irreducible bounded symmetric domains in \cite{Mes 1988}.
\end{remark}





 \subsection{Sets of fixed points} 

We define the {\it set of fixed points} of a holomorphic self-mapping $g$ of $D$ by
\begin{equation*}
\operatorname{Fix} (g):=\{x\in D:g(x)=x\}.
\end{equation*}
Proposition~\ref{proposition:idempotent-automorphism} shows the existence of a holomorphic self-mapping $g$ on $D$ such that $\operatorname{Fix}(g)=\mathcal J_\pi$  which justifies our interest in the geometry of $\operatorname{Fix}(g)$.  

Before we proceed to further consideration, we recall some basic notions and results on the holomorphically invariant functions and Lempert domains. Lempert domains are of our interest since we can say more about the structure of the set of fixed points in this class of domains.
\subsubsection{(Holomorpically) invariant functions and Lempert domains}
For a domain $D\subseteq\mathbb C^n,$ the {\it Lempert function} $l_D$ is defined as following: for $w,z\in D,$ 
\begin{equation*}
    l_D(w,z):=\inf\{p(\lambda_1,\lambda_2):\exists f:\mathbb D\to D \text{ holomorphic and such that } f(\lambda_1)=w,\; f(\lambda_2)=z\},
\end{equation*} where $p$ denotes the {\it Poincar\'e distance} on the unit disc $\mathbb D\subset\mathbb C$.
{\it The Carath\'eodory pseudo-distance} is defined by
\begin{equation*}
    c_D(w,z):=\sup\{p(F(w),F(z)):F:D\mapsto\mathbb D \text{ is holomorphic}\}.
\end{equation*}
We say that a taut domain $D\subset\mathbb C^n$ is {\it a Lempert domain} if the identity $l_D\equiv c_D$ holds. In particular, any pair of distinct points are connected by a {\it complex geodesic} $f:\mathbb D\to D$ that has a {\it left inverse} $F:D\to\mathbb D$, that is, $F\circ f=\id_{\mathbb D}$. For example, bounded convex domains, strongly linearly convex domains, the symmetrized bidisc and the tetrablock are Lempert domains \cite{Lem 1981,Lem 1982,Lem 1984,Agl-You 2001,Cos 2004,Agl-You 2006,Abo-You-Whi 2007,Edi-Kos-Zwo 2013}. Additionally, a good reference monograph for basic properties of holomorphically invariant functions is \cite{Jar-Pfl 2013}.

Now we recall properties which illustrate the structure of $\operatorname{Fix}(g)$ more intrinsically for certain domains. 
\begin{remark}\label{remark:fix} There is a huge literature on the properties of sets of fixed points. We enlist below the results which are mostly due to J. P. Vigue and can be found, for instance in \cite{Vig 1985} (see also \cite{Aba-Vig 1991}) or are direct consequences of the Lempert theory.
\begin{enumerate}
\item[1.] If $D$ is strongly linearly convex or the Lempert domain with the uniqueness of complex geodesics, for example, strictly convex domains, the symmetrized bidisc then $\operatorname{Fix}(g)$ is {\it totally geodesic}, that is, any complex geodesic joining two points in the set must lie in the set completely.
\item[2.] Consequently, if $D$ is strongly linearly convex in $\mathbb C^2,$ then $\operatorname{Fix}(g)$ is either an empty set, a point, a complex geodesic or the whole $D.$
\item[3.] If $D$ is a bounded convex domain then $\operatorname{Fix}(g)$ is {\it weakly totally geodesic}, that is, for any $w,z\in \operatorname{Fix}(g)$ we can find a complex geodesic joining $w$ and $z$ such that its graph is contained in $\operatorname{Fix}(g).$ Moreover, $\operatorname{Fix}(g)$ is a submanifold and retract.
\item[4.] If $g:\mathbb B_n\to\mathbb B_n$ then $\operatorname{Fix}(g)$ is the intersection of the affine subspace with $\mathbb B_n$.
\end{enumerate}
\end{remark}

Now we go back to our previous setting to use the results mentioned in Remark~\ref{remark:fix}. Using Proposition~\ref{proposition:idempotent-automorphism}, we conclude that for every $2$-proper holomorphic mapping $\pi : D \to G$ there exists an \textcolor{black}{involution} $g \in \Aut(D)$ such that $\operatorname{Fix}(g) = \mathcal J_\pi=\{z\in D: \det \pi^{\prime}(z)=0\},$ the locus set of $\pi.$
Suppose that $D$ is a bounded convex domain. Then a direct consequence of Remark~\ref{remark:fix} is that the locus set $\mathcal J_\pi$ of a $2$-proper holomorphic mapping $\pi:D\to G$ is weakly totally geodesic, a retract and a submanifold. However, a more can be stated:

\begin{proposition} Let $\pi:D\to G$ be a $2$-proper holomorphic mapping, where $D$ is a bounded convex domain. Then $\pi(\mathcal J_\pi)$ is a retract in $G$. Consequently, it is a submanifold and weakly totally geodesic. Moreover, 
\begin{equation*}
c_D(z_1,z_2)=l_D(z_1,z_2)=c_G(\pi(z_1),\pi(z_2))=l_G(\pi(z_1),\pi(z_2)), \;\, z_1,z_2\in \mathcal J_\pi.
\end{equation*}
 \end{proposition}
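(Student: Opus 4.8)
The plan is to construct an explicit retraction of $G$ onto $\pi(\mathcal J_\pi)$ by descending the retraction of $D$ onto $\mathcal J_\pi$ through the proper map $\pi$, and then to obtain the chain of equalities of invariant functions as a corollary of the retraction property combined with the Lempert theorem on the convex domain $D$.

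First I would recall, from Proposition~\ref{proposition:idempotent-automorphism} and the discussion preceding this statement, that there is an involution $g\in\Aut(D)$ with $\operatorname{Fix}(g)=\mathcal J_\pi$, and that by item~3 of Remark~\ref{remark:fix} (since $D$ is bounded convex) there is a holomorphic retraction $r:D\to\mathcal J_\pi$ onto the fixed-point set, with $r|_{\mathcal J_\pi}=\id$. The key observation is that $\pi\circ g=\pi$, so $r$ respects the fibers of $\pi$ in the right sense: since $r$ maps into $\mathcal J_\pi$ and $\pi$ is injective on $\mathcal J_\pi$ (because $\pi^{-1}(\pi(\mathcal J_\pi))=\mathcal J_\pi$, as noted in the proof of Proposition~\ref{proposition:idempotent-automorphism}), the map $r$ cannot distinguish the two points of a generic fiber $\{z_1,z_2\}=\pi^{-1}(w)$ in a way that obstructs descent. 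Concretely, I would verify that $r\circ g$ is again a retraction onto $\mathcal J_\pi$ and that, after composing with the averaging/fiber-collapsing provided by Corollary~\ref{galois} (which identifies $G$ with $D/\operatorname{Deck}(\pi)$), the assignment $w\mapsto \pi(r(z))$ for any $z\in\pi^{-1}(w)$ is well-defined. This is exactly the fiber-preserving situation discussed after Definition~\ref{iso}: the induced map $R:G\to\pi(\mathcal J_\pi)$ is continuous, holomorphic off the critical values, hence holomorphic on all of $G$ by Riemann removability, and $R|_{\pi(\mathcal J_\pi)}=\id$. Thus $R$ is a holomorphic retraction of $G$ onto $\pi(\mathcal J_\pi)$. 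Being a retract of a domain, $\pi(\mathcal J_\pi)$ is automatically a submanifold, and weak total geodesy is inherited from that of $\mathcal J_\pi$ via the retraction.

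For the equalities of invariant functions, fix $z_1,z_2\in\mathcal J_\pi$. Since $D$ is bounded convex, the Lempert theorem gives $c_D=l_D$ on $D$, yielding the leftmost equality. The contractivity of $l$ and $c$ under the holomorphic maps $\pi:D\to G$ and $R\circ\pi=\pi\circ r$ gives the inequalities $c_G(\pi(z_1),\pi(z_2))\le c_D(z_1,z_2)$ and, since $r$ fixes $\mathcal J_\pi$ pointwise, a matching reverse inequality $c_D(z_1,z_2)=c_D(r(z_1),r(z_2))\le c_G(\pi(z_1),\pi(z_2))$ obtained by pulling back extremal functions for $c_G$ along $\pi$ and composing with $R$; the analogous estimates hold for $l$. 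Squeezing these together with $c\le l$ in general collapses all four quantities to a single value.

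The main obstacle I anticipate is the \emph{well-definedness and holomorphicity of the descended retraction} $R$. The subtlety is that $r:D\to\mathcal J_\pi$ need not a priori be $g$-invariant, so $r(z_1)$ and $r(z_2)$ for the two preimages of a regular value $w$ could differ; one must argue that after landing in $\mathcal J_\pi=\operatorname{Fix}(g)$ and projecting by $\pi$ the ambiguity disappears, using that $\pi$ is injective on $\mathcal J_\pi$. I would handle this by replacing $r$ with the symmetrized retraction built from $r$ and $r\circ g$ if necessary, and then invoking the fiber-preserving-biholomorphism machinery together with Riemann removability exactly as in the paragraph following Definition~\ref{iso}, which is the natural template for pushing holomorphic objects through a $2$-proper map.
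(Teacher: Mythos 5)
Your overall strategy coincides with the paper's: take the Vigu\'e retraction $r:D\to\mathcal J_\pi=\operatorname{Fix}(g)$ (Remark~\ref{remark:fix}, item 3), descend it through $\pi$ to a holomorphic retraction of $G$ onto $\pi(\mathcal J_\pi)$ via continuity plus Riemann removability, and then transfer extremal objects using Lempert's theorem on $D$. But the two steps you yourself flag as delicate do not work as you state them. First, the descent. Your fix for the non-$g$-invariance of $r$ --- ``the symmetrized retraction built from $r$ and $r\circ g$'' --- is left unspecified, and its natural reading $\tilde r=\frac12\bigl(r+r\circ g\bigr)$ fails: $\mathcal J_\pi$ is a complex submanifold, not a convex set, so the average of the two points $r(z),r(g(z))\in\mathcal J_\pi$ need not lie in $\mathcal J_\pi$; hence $\tilde r$ is not a retraction onto $\mathcal J_\pi$ and $\pi\circ\tilde r$ does not take values in $\pi(\mathcal J_\pi)$. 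The paper symmetrizes in the opposite order, which is the one that works: average the fiber first, retract second. Explicitly, $s(w):=\pi\bigl(r\bigl(\tfrac{z+g(z)}{2}\bigr)\bigr)$, where $\pi^{-1}(w)=\{z,g(z)\}$; convexity of $D$ puts $\tfrac{z+g(z)}{2}$ in $D$, involutivity of $g$ makes the formula independent of the choice of point in the fiber, the value lies in $\pi(\mathcal J_\pi)$ because $r$ is applied last, and holomorphy follows by exactly your removability argument. (Your symmetrization can be salvaged by passing to $r\circ\tilde r$, but some such correction is genuinely needed.)

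Second, the equalities of invariant functions. You propose to get the reverse inequality $c_D(z_1,z_2)\le c_G(\pi(z_1),\pi(z_2))$ ``by pulling back extremal functions for $c_G$ along $\pi$''; pulling back along $\pi$ proves only $c_G(\pi(z_1),\pi(z_2))\le c_D(z_1,z_2)$, i.e.\ the contractivity you already have, so as written this step is circular. What is needed is a holomorphic map going the other way: the $g$-invariant map $q:G\ni w\mapsto r\bigl(\tfrac{z+g(z)}{2}\bigr)\in\mathcal J_\pi\subset D$ (the lift of the retraction above), which satisfies $q(\pi(z_j))=z_j$ for $z_j\in\mathcal J_\pi$. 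Composing competitors $h:D\to\mathbb D$ for $c_D$ with $q$ gives $c_G(\pi(z_1),\pi(z_2))\ge c_D(z_1,z_2)$, and composing analytic discs in $G$ with $q$ gives $l_D(z_1,z_2)\le l_G(\pi(z_1),\pi(z_2))$; together with contractivity under $\pi$ and Lempert's theorem on $D$, all four quantities coincide. The paper packages this by showing that for a complex geodesic $f$ through $z_1,z_2$ with left inverse $F$, the function $F\circ q$ is a left inverse of $\pi\circ f$; this also proves that $\pi|_{\mathcal J_\pi}$ preserves complex geodesics, which is what actually yields the weak total geodesy of $\pi(\mathcal J_\pi)$ --- a claim your proposal asserts (``inherited via the retraction'') but never establishes, since being a holomorphic retract does not by itself produce geodesics of $G$ lying in the retract.
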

\begin{proof} It is known that there exists a holomorphic retract $r:D\to \mathcal J_\pi$. Then $s:G\to\pi(\mathcal J_\pi)$ defined by $$s(w):=\pi\left(r\left(\frac{z+a(z)}{2}\right)\right),$$ is a retract in $G,$ where $a$ is the deck automorphism and $\pi^{-1}(w)=\{z,a(z)\}$. Thus, $\mathcal J_\pi$ is a submanifold. Moreover, $\pi|_{\mathcal J_\pi}$ preserves complex geodesics. More precisely, if $f:\mathbb D\to D$, where $f(\lambda_j)\in\mathcal J_\pi$, $j=1,2$, $\lambda_1\neq\lambda_2$, is a complex geodesic with the left inverse $F$ then the map $\Phi:G\owns w\to F\left(r\left(\frac{z+a(z)}{2}\right)\right)$ is a left inverse to $\pi\circ f$. In particular, $\pi(\mathcal J_\pi)$ is weakly totally geodesic and for $z_1,z_2\in\mathcal J_\pi,$
\begin{equation*}
c_D(z_1,z_2)=l_D(z_1,z_2)=c_G(\pi(z_1),\pi(z_2))=l_G(\pi(z_1),\pi(z_2)).
\end{equation*}
\end{proof}

In the result below we give a class of domains (including convex ones) ensuring the non-triviality of $\Fix(g)$ for \textcolor{black}{involution}s $g \in \Aut(D).$
 
\begin{proposition}\label{proposition:fix-non-empty} Let $g:D\to D$ be such that $g\circ g=\id$, where $D$ is a Lempert domain with uniquely determined geodesics or a bounded convex domain. Then $\Fix(g)$ is not empty.
\end{proposition}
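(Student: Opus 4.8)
The plan is to join the two points $z_0$ and $g(z_0)$ by a complex geodesic, transport the involution $g$ to an involution of the unit disc, and then locate the latter's interior fixed point. First I would dispose of the trivial case: if $g=\id$ then $\Fix(g)=D\neq\emptyset$, so assume $g\neq\id$ and choose $z_0\in D$ with $w_0:=g(z_0)\neq z_0$ (such a point exists because $g\neq\id$). Since $D$ is a Lempert domain we have $c_D=l_D$, and tautness guarantees that the two points are joined by a complex geodesic $f\colon\mathbb{D}\to D$, say $f(\lambda_1)=z_0$ and $f(\lambda_2)=w_0$ with $\lambda_1\neq\lambda_2$.

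The main step is to show that $g$ carries this geodesic onto itself. Because $g\in\Aut(D)$ is an isometry for $c_D$, the map $g\circ f$ is again a complex geodesic, and using $g\circ g=\id$ it passes through $g(z_0)=w_0$ and $g(w_0)=z_0$, hence through the same unordered pair $\{z_0,w_0\}$ as $f$. When geodesics are uniquely determined this forces $f$ and $g\circ f$ to share the same image, so $g\circ f=f\circ\varphi$ for a unique $\varphi\in\Aut(\mathbb{D})$, and comparing values at $\lambda_1,\lambda_2$ gives $\varphi(\lambda_1)=\lambda_2$ and $\varphi(\lambda_2)=\lambda_1$. Applying $g$ once more and using the injectivity of $f$ yields $\varphi\circ\varphi=\id$; since $\varphi(\lambda_1)=\lambda_2\neq\lambda_1$, the map $\varphi$ is a non-trivial involutive automorphism of $\mathbb{D}$, hence elliptic with a single fixed point $\lambda_\ast\in\mathbb{D}$ (every non-identity involution of $\mathbb{D}$ is conjugate to $\lambda\mapsto-\lambda$). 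Then $g(f(\lambda_\ast))=f(\varphi(\lambda_\ast))=f(\lambda_\ast)$, so $f(\lambda_\ast)\in\Fix(g)$.

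For a bounded convex domain the complex geodesic joining two points need not be unique (for instance in the bidisc), so the identity $g\circ f=f\circ\varphi$ above may fail and I would argue differently. Here $D$ is taut, and by the Wolff--Denjoy theorem for bounded convex domains a fixed-point-free holomorphic self-map has compactly divergent iterates. As $g\circ g=\id$, the iterate sequence $(g^{k})$ takes only the two values $\id,g$ and is therefore not compactly divergent, which forces $g$ to possess a fixed point.

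The crux---and the only real obstacle---is the geodesic-invariance $g\circ f=f\circ\varphi$, which is exactly where the uniqueness of geodesics enters. The rotation $z\mapsto-z$ of an annulus is a fixed-point-free involutive automorphism of a taut (but non-Lempert) domain, so tautness by itself cannot suffice; and the possible failure of geodesic uniqueness on general convex domains is precisely why the convex case must be routed through the Wolff--Denjoy theorem rather than through the disc involution $\varphi$.
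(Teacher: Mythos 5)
Your proof is correct, and it splits into one half that matches the paper and one half that genuinely departs from it. In the Lempert case you and the paper do essentially the same thing: the paper forms the $g$-invariant set of midpoints of complex geodesics joining $a$ and $b=g(a)$ and observes that uniqueness of geodesics collapses this set to a single, necessarily fixed, point; your conjugation of $g$ along the unique invariant geodesic to a non-trivial involution $\varphi\in\Aut$ of $\mathbb D$ produces exactly that midpoint as $f(\lambda_*)$, since the fixed point of $\varphi$ is the hyperbolic midpoint of $\lambda_1,\lambda_2$. The real divergence is the convex case. The paper stays elementary and self-contained: the midpoint set $K$ is convex (average two geodesics sharing a common parametrization --- convexity of $D$ keeps the average inside $D$ and it remains extremal), relatively compact, and satisfies $g(K)=K$, so Brouwer's fixed point theorem finishes the proof; this is in the spirit of the Vigu\'e-type fixed-point results the paper cites. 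You instead import Abate's Wolff--Denjoy theorem for bounded convex domains (a fixed-point-free holomorphic self-map has compactly divergent iterates), after which the conclusion is immediate because the iterates of $g$ contain $\id$ infinitely often. Both routes are valid; the paper's buys self-containedness with only Brouwer as input, while yours buys a one-line deduction at the cost of a substantially deeper external theorem --- if you take that route, cite it precisely (M.~Abate, \emph{Horospheres and iterates of holomorphic maps}, Math. Z. 198 (1988), or Theorem 2.4.20 of his iteration-theory monograph), since the statement for general bounded convex, not strongly convex, domains is exactly what you need.
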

\begin{proof} 




Choose $a\in D$ and put $b:=g(a)$. For our proof assume that $a\neq b$. Define by $K$ the set of all mid-points of complex geodesics joining $a$ and $b$ (in the image of any complex geodesic joining $a$ and $b$ there is exactly one point $z$ lying in the complex geodesic such that $2l_D(a,z)=2l_D(z,b)=l_D(a,b)$). Evidently, $g(K)=K$.

If $D$ is a Lempert domain with uniquely determined complex geodesics then $K$ consists of only one point and $K\subset\Fix(g)$.

If $D$ is a bounded convex domain then $K$ is convex and then Brouwer theorem provides the existence of a point $z\in K$ such that $g(z)=z$.
\end{proof}

 \begin{remark} On the other hand, the non-triviality of $\Fix(g)$ for any \textcolor{black}{involution} $g \in \Aut(D)$ does not always hold for non-convex domains. For example, consider $g:D \to D$ defined by $\lambda\mapsto-\lambda$, where $D\subset\mathbb C$ is an annulus centered at the origin. Note that $\operatorname{Fix}(g)$ is empty.
\end{remark}
 

We end this subsection with a list of examples in order to illustrate the structure of fixed points of the \textcolor{black}{involutive} automorphisms of some well-known domains. This allows us to characterize all possible $2$-proper holomorphic mappings defined on such domains up to isomorphisms.




\subsubsection{Dimension one - examples}
In dimension $1,$ we have the following equivalence classes for $2$-proper holomorphic mappings.
 
We claim that all $2$-proper holomorphic mappings on the unit disc $\mb D$ are isomorphic to $F : \mb D \to \mb D$ such that $ F(z)= z^2.$  Suppose that $\pi : \mb D \to G$ is a $2$-proper holomorphic mapping with ${\rm Deck}(\pi) = \{\id,a\}.$ To prove that $\pi$ and $F$ are isomorphic, we first note that $\operatorname{Fix}(a)$ is non-empty which essentially means here that it consists only of one point, say $\lambda_0.$ Since $a\circ a$ is the identity, for the automorphism $m$ of $\mathbb D$ with $m(0)=\lambda_0$ we have $m^{-1}\circ a \circ m(z)=-z=-\id(z)$ for $z\in \mb D.$ Therefore, without loss of generality we assume $\operatorname{Deck}(\pi)=\{\id,-\id\}$ and define the desired biholomorphism by the formula $G\owns w\to z^2\in\mathbb D$ where $\pi^{-1}(w)=\{z,-z\}$.


Suppose that $D$ is the annulus with radii $r<1<1/r,$ denoted by $A(r,1/r)$ and $\pi : A(r,1/r) \to G$ is a $2$-proper holomorphic mapping. Then $\pi$ has to be isomorphic to any of the following: 
\begin{enumerate}
    \item[1.] The mapping $\pi_1 : A(r,1/r) \to A(r^2,1/r^2)$ defined by $\pi_1( z)= z^2$ is isomorphic to $\pi$ if $\operatorname{Deck}(\pi)=\{\id,-\id\}.$ Here, $\Fix(-\id)$ is an empty set.
    \item[2.] the proper holomorphic mapping $\pi_2$ between $A(r,1/r)$ and the ellipse (and consequently by Riemann mapping theorem to the disc) defined by $\pi_2(z) = z+\frac{\omega}{z}$ is isomorphic to $\pi$ if $\operatorname{Deck}(\pi)=\{\id,g_\omega\}$, where $g_\omega(z)=\omega/z$ for some $|\omega|=1.$ In this case, $\Fix(g_\omega) = \{z : z^2 = \omega\}.$ Here, $\Fix (g_{\omega})$ consists of two points.
\end{enumerate}

\subsubsection{Dimension two - examples}\label{couext}
Here, we consider $2$-proper holomorphic mappings $\pi : D \to G$ where $D$ is either the bidisc $\mb D^2$ or the Euclidean ball $\mb B_2$ and $G$ is a domain in $\mathbb C^2$. We denote the deck automorphism group by ${\rm Deck}(\pi) = \{\id, g\}$ and provide a characterization of $\pi$ on the basis of $g.$

 First we claim that 
 $g$ can never be $-\id.$ Let us consider the {\it generalized Neil parabola} defined by $\{w\in\mathbb C^3:w_1w_2=w_3^2\}$ and a mapping $\pi_0 : D\to\{w\in\mathbb C^3:w_1w_2=w_3^2\}$ defined by
\begin{equation*}
 \pi_0 : z\to (z_1^2,z_2^2,z_1z_2),
\end{equation*} for $D=\mathbb B_2,\mathbb D^2.$ Then $\pi$ is isomorphic to $\pi_0$ and the associated $G$ is biholomorphic to the complex space $D/\{\id,-\id\}$ which can be realized as the generalized Neil parabola \textcolor{black}{and the set of fixed points of $-\id$ contains only the origin. Suppose that the generalized Neil parabola is biholomorphic to a domain in $\mathbb C^2$ then the dimension of the set of fixed points of $-\id$ has to be $2-1=1,$ (cf. Remark~\ref{remark:only-reflections}) which is not the case here. So the generalized Neil parabola is not biholomorphic to any domain in $\mathbb C^2.$ This is a contradiction since we assume $G$ to be a domain in $\mathbb C^2$.}  
We study the rest of $2$-proper holomorphic mappings separately on $\mathbb B_2$ and $\mathbb D^2.$ For $D=\mathbb D^2,$
\begin{enumerate}
\item[1.] if $g(z_1,z_2)=(-z_1,z_2)$ then $G$ is bihlomorphic to the bidisc and $\pi$ is isomorphic to $\pi_1 : \mb D^2 \to \mb D^2$ which is defined by $\pi_1(z_1,z_2)= (z_1^2,z_2).$ Here, $\Fix(g)=\{0\}\times\mathbb D$.
\item[2.] If $g_\omega(z_1,z_2)=(\overbar{\omega}^2 z_2,\omega^2 z_1)$ for some $|\omega|=1$ then $G$ is bihlomorphic to the symmetrized bidisc $\mathbb G_{2}$ and $\pi$ is isomorphic to $\pi_{2,\omega} : \mb D^2 \to \mathbb G_{2}$ which is defined by  $$ (z_1,z_2)\to (\overbar{\omega} z_1+\omega z_2,z_1z_2).$$ Here,
$\Fix(g_\omega)=\{(\overbar{\omega} \lambda,\omega\lambda):\lambda\in\mathbb D\}.$ 
\end{enumerate}
On the other hand, for the Euclidean ball we only have the case where $g(z_1,z_2)=(z_1,-z_2).$ Then the domain $G$ is represented by the complex ellipsoid $\{(z_1,z_2) : |z_1|^2+|z_2|<1\}$ and $\pi$ is isomorphic to $\pi_3 : (z_1,z_2)\mapsto(z_1,z_2^2).$ Also, $\Fix(g)=\mathbb D\times\{0\}.$ 

\subsection{Reflections} 
The above examples show that $D/{\rm Deck}(\pi)$ can be a complex space which is not necessarily a domain. However, if ${\rm Deck}(\pi)$ is a pseudoreflection group then $D/{\rm Deck}(\pi)$ is ensured to be a domain. In the characterization of $2$-proper holomorphic mappings from the classical Cartan domains, pseudoreflections of order $2$ (reflections) play the most important role which we describe in the next section.
\begin{definition}
{\it A pseudoreflection} on $\C^n$ is a linear homomorphism $\rho: \C^n \rightarrow \C^n$ such that $\rho$ is of finite order and the rank of $\id_n - \rho$ is 1. In particular, if $\rho$ is \textcolor{black}{of order $2$,} we refer to it by {\it reflection}.
\end{definition}
 Here, $\id_n$ denotes the identity operator from $\mb C^n$ to itself. Clearly, ${\rm ker}(\id_n-  \rho)$ is a hyperplane in $\mb C^n.$  In other words, the reflection $\rho$ is the identity on the hyperplane ${\rm ker}(\id_n-  \rho)$ and $\rho(v)=-v$ for some $v\neq 0.$
A group generated by pseudoreflections is called a pseudoreflection group. A pseudoreflection group $\Gamma$ acts on $\mb C^n$ by $\rho \cdot z = \rho^{-1}z$ for $\rho \in \Gamma$ and $z \in \mb C^n.$ Moreover, $\Gamma$ acts on the set of all complex-valued function on $\mb C^n$ by $\rho(f)(z) = f(\rho^{-1}\cdot z).$ We say $f$ is $\Gamma$-invariant if $\rho(f)=f$ for every $\rho \in \Gamma.$ 

Chevalley-Shephard-Todd theorem says that the ring of $\Gamma$-invariant polynomials in $n$ variables is equal to $\C[\theta_1,\ldots,\theta_n]$, where $\theta_i$ are algebraically independent
homogeneous polynomials. We refer to the mapping $\theta:=(\theta_1,\ldots,\theta_n) : \mb C^n \to \mb C^n$ by a \emph{basic polynomial} map associated to the group $\Gamma.$ Suppose that a domain $D \subseteq \mb C^n$ remains invariant under the action of $\Gamma.$ It is known that $\theta : D \to \theta(D)$ is a proper holomorphic mapping with the deck automorphism group $\Gamma$ \cite{Try 2013, Rud 1982}. Moreover, any proper holomorphic map $f : D \to D^\prime$ with the deck automorphism group $\Gamma$ is isomorphic to $\theta$ and $D^\prime$ is biholomorphic to $\theta(D)$ \cite{Gho 2021}. 

We observe in Example \ref{couext} that the deck automorphisms for $\pi_1$ and $\pi_{2,1}$ on $\mb D^2$ are conjugate to each other in the set of all $2\times 2$ complex matrices. However, they fail to be a conjugate to each other in $\Aut(\mb D^2)$ which motivates us to the following result. We describe this example in more detail in Remark \ref{nonex}. 

\begin{proposition}\label{proposition:conjugate}
    Suppose that $\Gamma_1$ and $\Gamma_2$ are two pseudoreflection groups and $\Gamma_2 = P\Gamma_1 P^{-1}.$ Also, suppose the domain $D$ is invariant under the action of $\Gamma_1$ and $P$ induces an automorphism of $D.$ Then the basic polynomial maps $\theta_1 : D \to \theta_1(D)$ and $\theta_2 : D \to \theta_2(D)$ are isomorphic. 
\end{proposition}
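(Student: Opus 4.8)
The plan is to realize $\theta_1$ and $\theta_2$ as isomorphic by routing through the composition $\theta_2\circ P$, where $P$ is viewed as an element of $\Aut(D)$. By the remark following Definition~\ref{iso}, post-composing a proper map with an automorphism of the source yields an isomorphic proper map; hence $\theta_2\circ P$ is isomorphic to $\theta_2$, and $\theta_2\circ P:D\to\theta_2(D)$ is genuinely proper, being the composition of the biholomorphism $P|_D$ with the proper map $\theta_2$ (its image is $\theta_2(P(D))=\theta_2(D)$ since $P(D)=D$). It therefore suffices to show that $\theta_2\circ P$ is isomorphic to $\theta_1$; transitivity of the isomorphism relation of Definition~\ref{iso} then gives $\theta_1\cong\theta_2$.

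First I would compute the deck automorphism group of $\theta_2\circ P$ and show it equals $\Gamma_1$. For any $\rho\in\Aut(D)$ one has $(\theta_2\circ P)\circ\rho=\theta_2\circ(P\circ\rho)=\theta_2\circ(P\rho P^{-1})\circ P$; since $P$ is onto $D$, this equals $\theta_2\circ P$ if and only if $\theta_2\circ(P\rho P^{-1})=\theta_2$, i.e.\ $P\rho P^{-1}\in\Deck(\theta_2)=\Gamma_2$. Using the hypothesis $\Gamma_2=P\Gamma_1P^{-1}$, this is equivalent to $\rho\in\Gamma_1$. Because $P$ is linear, the relation $P\rho P^{-1}\in\Gamma_2\subset\GL(n,\C)$ forces $\rho=P^{-1}(P\rho P^{-1})P$ to be linear as well, so no spurious non-linear deck transformations arise and we obtain the exact identity $\Deck(\theta_2\circ P)=\Gamma_1$.

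With this identification in hand I would invoke the result recalled earlier from \cite{Gho 2021}: any proper holomorphic map on $D$ whose deck automorphism group is $\Gamma_1$ is isomorphic to the basic polynomial map $\theta_1$ of $\Gamma_1$. Applying it to $\theta_2\circ P$ gives $\theta_2\circ P\cong\theta_1$, and combining with $\theta_2\circ P\cong\theta_2$ yields $\theta_1\cong\theta_2$, as desired.

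A self-contained alternative, avoiding the citation, is to build the conjugating biholomorphism directly. Taking $a:=P\in\Aut(D)$, one checks that $P$ is fiber-preserving for $\theta_1$ and $\theta_2$: a generic fiber $\theta_1^{-1}(c)$ is a $\Gamma_1$-orbit $\Gamma_1\cdot z_0$, and $P(\Gamma_1\cdot z_0)=(P\Gamma_1P^{-1})\cdot P(z_0)=\Gamma_2\cdot P(z_0)$ is a single fiber of $\theta_2$. The fiber-preserving principle stated after Definition~\ref{iso} (continuity off the set of critical values together with Riemann removability) then produces a biholomorphism $b:\theta_1(D)\to\theta_2(D)$ with $b\circ\theta_1=\theta_2\circ P$. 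In either route the only point requiring mild care is the bookkeeping of the conjugation $\rho\mapsto P\rho P^{-1}$ and the verification that it is a bijection of $\Deck(\theta_2\circ P)$ onto $\Gamma_2$; everything else is formal, so I do not anticipate a substantive obstacle here.
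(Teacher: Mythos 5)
Your proposal is correct and takes essentially the same approach as the paper: where you transfer $\theta_2$ by $P$ to get a proper map with deck group $\Gamma_1$ and invoke the uniqueness result of \cite{Gho 2021}, the paper runs the mirror-image argument, transferring $\theta_1$ by $P^{-1}$ to get $F=\theta_1\circ P^{-1}$ with deck group $\Gamma_2$ and invoking the quotient identification $\theta(D)\cong D/\Gamma$ from \cite{Bis-Dat-Gho-Shy 2022}, arriving at the identical conclusion $\theta_2\circ P=h\circ\theta_1$. Your second, fiber-preserving route likewise parallels the paper's own Remark~\ref{remark:conjugate}, so no substantive difference remains.
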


\begin{proof}
More precisely, we prove that
    $$\theta_2 \circ P = h \circ \theta_1$$ for some biholomorphism $h : \theta_1(D) \to \theta_2(D).$
    
    Note that $\theta_1 : D\to \theta_1(D)$ is a proper holomorphic mapping with the deck automorphism group $\Gamma_1,$ that is,
    $$\theta_1^{-1}\theta_1(z) = \{\sigma\cdot z : \sigma \in \Gamma_1\}.$$
    Then $\theta_1(D)$ is biholomorphic to $D/\Gamma_1$ \cite[Subsection 3.1.1]{Bis-Dat-Gho-Shy 2022}. Consider the map $F=\theta_1 \circ P^{-1}$ which has the following property :
    $$F^{-1}F(z) = \{P\sigma P^{-1}\cdot z : \sigma \in \Gamma_1\},$$ that is, the deck automorphism group of $F$ is $\Gamma_2.$ Also, we note that for $P$ being an automorphism of $D,$ $F$ is well-defined and $F(D)= \theta_1(D).$ Thus being factored by the same pseudoreflection group $\Gamma_2,$ both $F(D)$ and $\theta_2(D)$ are biholomorphic to $D/\Gamma_2$ by \cite[Subsection 3.1.1]{Bis-Dat-Gho-Shy 2022}. Therefore, there exists a biholomorphic mapping $h : \theta_1(D) \to \theta_2(D)$ such that $h \circ F = \theta_2$ that is $h \circ \theta_1 \circ P^{-1} = \theta_2$ which proves the claim.
\end{proof}

\begin{remark}\label{remark:conjugate} We provide another elementary way to see that the conjugate reflections give the same (up to biholomorpisms) images of proper holomorphic mappings without addressing \cite[Subsection 3.1.1]{Bis-Dat-Gho-Shy 2022}.  Let $\sigma_1:=\sigma$ and $\sigma_2:=P\sigma P^{-1}$ be two reflections such that the domain $D$ is invariant under $\sigma_j$, $j=1,2$ and $P$ be a linear isomorphism of $D.$ 
Let $R_j$ be the hyperplane such that $\sigma_j$ leaves it invariant. Then $P(R_1)=R_2$. Denote the non-trivial eigenvectors of $\sigma_j$ by $v_j$. Without loss of generality, we assume that $P(v_1)=v_2$. Then the basic polynomials $\theta_j$ can be defined as following. Let $A$ and $B$ be linear isomorphisms such that $A(R_1)=B(R_2)=\{0\}\times\mathbb C^{n-1}$, $A(R_1\cap D)=B(R_2\cap D)$ and $A(v_1)=B(v_2)=(1,0,\ldots,0) \in \mb C^n$. We put
\begin{equation*}
    \theta_1(z):=((A_1(z))^2,A_2(z),\ldots,A_n(z)),\; \theta_2(z):=((B_1(z))^2,B_2(z),\ldots,B_n(z)).
\end{equation*}
Then $\theta_1=\theta_2\circ P$ which gives that the representing domains of $D/\Gamma_1$ and $D/\Gamma_2$ are biholomorphic (identical for the choice we made here).
\end{remark}

\begin{remark}\label{nonex}
We explain the above result in details with examples for bidisc.
    \begin{enumerate}
        \item[1.] Recall that the proper holomorphic mapping $\pi_1 : \mathbb D^2\owns z\to (z_1,z_2^2)\in\mathbb D^2$ has the deck automorphism group $\{\id, diag(1,-1)\}$ 
        and  $\pi_{2,\omega} : \mathbb D^2\owns z\to (\overbar{\omega} z_1+\omega z_2,z_1z_2)\in\mathbb G_{2}$ has the deck automorphism group $\{\id,\begin{pmatrix}
            0& \omega^2\\
            \overbar{\omega}^2& 0
        \end{pmatrix}\}.$
        \item[2.]  Clearly, for $P_\omega = diag\{\omega,\overbar{\omega}\},$ $\begin{pmatrix}
            0& \omega^2\\
            \overbar{\omega}^2& 0
        \end{pmatrix} = P_\omega \begin{pmatrix}
            0& 1\\
            1& 0
        \end{pmatrix}P_\omega^{-1}$ and $P_\omega$ induces an automorphism of $\mb D^2.$ Then $\pi_{2,\omega} \circ P_\omega = \pi_{2,1}.$ Thus each element of $\{\pi_{2,\omega}: |\omega| = 1\}$ is isomorphic to $\pi_{2,1}.$
        On the other hand, $$\begin{pmatrix}
            0& 1\\
            1& 0
        \end{pmatrix}= P \begin{pmatrix}
            1& 0\\
            0& -1
        \end{pmatrix}P^{-1},$$ where $P = \begin{pmatrix}
            1& -1\\
            1& \phantom{-}1
        \end{pmatrix}$ can never induce an automorphism of $\mb D^2.$ So none of $\pi_{2,\omega}$ is isomorphic to $\pi_1.$ 
    \end{enumerate}
\end{remark}

\section{Classification of domains those are $2$-proper holomorphic images of the classical Cartan domains}
In this section, we generalize the method of producing two very special domains, namely the symmetrized bidisc and the tetrablock, that have been intensively studied in the last two decades and stimulated progress in different fields of mathematics, specially in the theory of several complex variables and operator theory, see \cite{Agl-You 2001,Agl-You 2006,Cos 2004,Abo-You-Whi 2007,Edi-Zwo 2005,Edi-Kos-Zwo 2013}, \cite{Agl-You 2000,Pal-Sha 2014,Bha 2014} and the references therein. There are many different ways of defining them. Though the original interest came from the $\mu$-synthesis, we recall the definition which suits the most with the theory developed in this article. Both of those can be defined as $2$-proper holomorphic images of some classical Cartan domains. First, we define {\it the symmetrized bidisc $\mathbb G_2$} as the $2$-proper holomorphic image of the bidisc $\mathbb D^2$ under the mapping $\pi$  (see \cite{Agl-You 2001}):
\begin{equation*}
\pi:(\lambda_1,\lambda_2)\to (\lambda_1+\lambda_2,\lambda_1\lambda_2),
\end{equation*}
whereas {\it the tetrablock } is defined as the image of the Cartan domain of type $III,$ $\mathcal R_{III}(2)$ (defined in Subsection \ref{car}) under the mapping (see \cite{Abo-You-Whi 2007})
\begin{equation*}
\Phi:A\to(a_{11},a_{22},\operatorname{det}A), \text{ where } A=\begin{pmatrix} a_{11} & a \\ a & a_{22}\end{pmatrix}.
\end{equation*}
Below we pursue this idea to produce many classes of domains which are the images of $2$-proper holomorphic images of Cartan domains and observe that in a quite natural sense, we could reduce those domains only to two new classes of domains. We show that those family of domains deliver similar geometric properties as the symmetrized bidisc and the tetrablock. Therefore, it is expected to attract attention from experts in at least two areas: several complex variables and operator theory. We begin with presenting some basic properties of classical Cartan domains.
   
\subsection{Classical Cartan domains.}\label{car} 
E. Cartan completely classified the irreducible bounded symmetric domains in \cite{Car 1935} up to a biholomorphic isomorphism. Those domains are referred as irreducible {\it classical Cartan domains} in this article. We reproduce that list  below. In the list, we omit the term `irreducible' since there is no ambiguity

 \begin{itemize}
\item $\mathcal R_I(m\times n) :$ The {\it classical Cartan domain of type $I$} consists of $m\times n$ complex matrices $A$ such that the matrix $\mathbb I_m-AA^*$ is positive definite, (that is, $\mathbb I_m-AA^*>0$ equivalently, $||A||<1$) where $A^*$ denotes the adjoint of $A$ and $\mb I_m$ denotes the identity matrix of order $m$.
\item $\mathcal R_{II}(n) : $  The {\it classical Cartan domain of type $II$} consists of the $n \times n$ skew-symmetric matrices such that $||A||<1.$
\item $\mathcal R_{III}(n) : $ The {\it classical Cartan domains of type $III$} is the set of $n \times n$ symmetric matrices such that $||A||<1.$
\item $\mathcal R_{IV}(n) : $ The {\it classical Cartan domains of type $IV$} (alternatively, {\it the Lie ball} $L_n$) is the following domain :
\begin{equation}
L_n:=\left\{z\in\mathbb B_n: \sqrt{\left(\sum_{j=1}^n|z_j|^2\right)^2-\left|\sum_{j=1}^nz_j^2\right|^2}<1-\sum_{j=1}^n|z_j|^2\right\}.
 \end{equation}
\end{itemize}
We refer to the Cartesian products of above mentioned domains as {\it classical Cartan domains}.
Additionally, there are two special irreducible symmetric bounded domains of dimensions $16$ and $27$ that we skip describing here in details. In the sequel, we collectively refer to these exceptional domains along with the irreducible classical Cartan domains by  {\it irreducible Cartan domains}.

Let denote $z\bullet z:=\sum_{j=1}^nz_j^2$ and $||z||:=\sqrt{\sum_{j=1}^n|z_j|^2}$ for  $z\in\mathbb C^n.$ A direct computation shows that $z\in L_n$ if and only if $||z||<1$ and $||z||^4-|z\bullet z|^2<(1-||z||^2)^2$ if and only if $||z||<1$ and $2||z||^2<1+|z\bullet z|^2.$

Recall that above domains are bounded, balanced and convex. There is some ambiguity in the numbering of types of classical domains in the literature. In the sequel, we follow the one presented above.

\subsection{Images of $2$-proper holomorphic mappings of classical domains} Having in mind the construction of the symmetrized bidisc and the tetrablock as the images of $2$-proper holomorphic mappings defined on classical Cartan domains $\mathbb D^2$ and $\mathcal R_{III}(2)$, respectively, we look at the possible images of other classical Cartan domains under $2$-proper holomorphic mappings. These mappings are (among others) determined by the set of fixed points of a deck automorphism (being \textcolor{black}{the involutive mapping}).
 
It is already known that any \textcolor{black}{involutive} automorphism of the classical Cartan domain has non-empty set of fixed points (cf. Proposition~\ref{proposition:fix-non-empty}). Additionally, since the group of automorphisms of classical domain is transitive, we assume that the deck automorphism $F$ of the given $2$-proper holomorphic mapping defined on the classical domain fixes the origin. Cartan theorem lets us conclude that $F$ is actually the linear mapping in this case. As we have already seen in nice domains (such as the bidisc and the Euclidean ball) if the set of fixed points of such $F$ is small (in the sense that it is of co-rank bigger than $1$) then the corresponding $2$-proper image of the domain is a complex space (and not a domain). Therefore, from this point to ensure that we are not leaving the category of domains of the Euclidean space, we assume that the set $\Fix(F)$ is a linear hyperspace. Then the group of automorphisms $\{\id,F\}$ factors the proper holomorphic mapping onto a domain as a consequence of the Chevalley-Todd-Shephard theorem \cite[Subsection 3.1.1]{Bis-Dat-Gho-Shy 2022}. In this situation, $F$ is {\it a reflection}, that is a linear \textcolor{black}{mapping of order $2$}, leaving a hyperplane invariant. Note that in the general situation the deck automorphism of a $2$-proper holomorphic mapping between domains in $\mathbb C^n$ can generate an \textcolor{black}{involutive} automorphism that is invariant on a complex submanifold of co-dimension $1$. Thus while the dominating domain is a classical Cartan domain, we lose no generality assuming that the deck automorphism is linear.

In fact the assumption that the non-trivial deck automorphism $F$ in the case of classical Cartan domains is a reflection, is not restrictive at all. This follows from the following observation.

\begin{remark}\label{remark:only-reflections} \it Note that in the situation of $2$-proper holomorphic mapping $\pi:D\to G$, where $D,G$ are domains in $\mathbb C^n$ and the set of fixed points of the non-trivial deck automorphism is non-empty and the analytic set $\mathcal J$ has dimension $n-1$.


Consequently, we lose no generality assuming only those $F$ which are linear \textcolor{black}{involutive} isomorphisms of classical Cartan domains having hyperplanes as their set of fixed points. Our next aim is to identify the domains that can be produced as images of corresponding $2$-proper holomorphic mappings from classical Cartan domains.
\end{remark}


\textcolor{black}{We recall following result from \cite[p. 18, Main Theorem]{Mes 1988} modified suitably for our purpose.
\begin{theorem}\cite[p. 18, Main Theorem]{Mes 1988} \label{Meschiari}
For $n>1,$ every proper holomorphic map
from an irreducible bounded symmetric domain of classical type into $\mathbb C^n$ is isomorphic to any basic polynomial map associated to some finite pseudoreflection group. 
\end{theorem}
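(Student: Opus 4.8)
The plan is to realise the image $G:=\pi(D)$ as a quotient $D/\Gamma$ of $D$ by a finite \emph{linear} pseudoreflection group $\Gamma$, and then to identify $\pi$ with the associated basic polynomial map through the Chevalley--Shephard--Todd theorem.

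First I would record, from the structure theorem of Rudin recalled above \cite[Chapter 15]{Rud 1980}, that $\pi$ restricts to a finite holomorphic covering of some multiplicity $m$ over the regular values, whose group of deck automorphisms $\Gamma:=\operatorname{Deck}(\pi)$ is a finite subgroup of $\Aut(D)$. The decisive step -- and, I expect, the genuine obstacle -- is to prove that this covering is \emph{Galois}, so that $\Gamma$ acts transitively on generic fibres and $G$ becomes biholomorphic to $D/\Gamma$. This is exactly the rigidity underlying Rudin's theorem for the ball \cite[Theorem 1.6]{Rud 1982}; its extension to irreducible bounded symmetric domains rests on the strong rigidity of holomorphic maps between such domains together with a careful analysis of the codimension-one branch locus $\pi(\mathcal J_\pi)$, and it is here that the real work of \cite{Mes 1988} is concentrated. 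The hypothesis $n>1$ is essential precisely at this stage, failing already for generic Blaschke products on the disc.

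Granting the Galois property, I would next linearise $\Gamma$. Endowed with its Bergman metric, $D$ is a Hadamard manifold on which the finite group $\Gamma$ acts by isometries, so the Cartan fixed-point theorem yields a point fixed by all of $\Gamma$; transporting it to the origin by an automorphism (permissible since we argue only up to isomorphism of proper maps, cf. Definition~\ref{iso}), the isotropy group at the origin is the whole of $\Gamma$, and Cartan's linearity theorem for bounded circular domains forces every element of $\Gamma$ to be a linear automorphism of $\mathbb{C}^n$ preserving $D$.

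It remains to see that $\Gamma$ is generated by pseudoreflections. Since $G$ is a domain in $\mathbb{C}^n$, the quotient $D/\Gamma\cong G$ is a smooth $n$-dimensional complex manifold; as the origin is fixed by all of $\Gamma$, a neighbourhood of its image is modelled on $\mathbb{C}^n/\Gamma$, and by the geometric form of the Chevalley--Shephard--Todd theorem such a quotient is smooth precisely when $\Gamma$ is generated by pseudoreflections (consistent with $\pi(\mathcal J_\pi)$ having codimension one, cf. Remark~\ref{remark:only-reflections}). Thus $\Gamma$ is a finite pseudoreflection group. Chevalley--Shephard--Todd then provides algebraically independent homogeneous generators $\theta_1,\ldots,\theta_n$ of the ring of $\Gamma$-invariants, and the universal property of the basic polynomial map recorded above -- any proper map with deck group $\Gamma$ is isomorphic to $\theta=(\theta_1,\ldots,\theta_n)$, with image biholomorphic to $\theta(D)$ \cite{Gho 2021} -- shows that $\pi$ is isomorphic to $\theta$, which is the assertion.
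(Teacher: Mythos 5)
First, note that the paper does not actually prove this statement: it is recalled verbatim (in slightly adapted form) from \cite[p.~18, Main Theorem]{Mes 1988}, so your attempt must be judged as a standalone reconstruction of Meschiari's argument rather than against an in-paper proof.

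Your architecture is the right one, and indeed the standard one: linearize the deck group $\Gamma=\operatorname{Deck}(\pi)$ by combining the Cartan fixed-point theorem (for the Bergman metric) with Cartan's linearity theorem for circular domains, deduce from smoothness of $G\cong D/\Gamma$ near the image of the fixed point that $\Gamma$ is generated by pseudoreflections (geometric Chevalley--Shephard--Todd, consistent with Remark~\ref{remark:only-reflections}), and conclude via the universal property of the basic polynomial map \cite{Gho 2021}. But there is a genuine gap, and you have named it yourself without filling it: the entire argument is conditional on the covering being \emph{Galois}, i.e.\ on $\Gamma$ acting transitively on generic fibres, and you defer exactly this step to ``the real work of \cite{Mes 1988}''. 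Since the statement being proven \emph{is} Meschiari's theorem, this makes the proof circular: without transitivity one cannot even write $G\cong D/\Gamma$, and every subsequent step collapses. The Galois property is automatic only for multiplicity $2$ (Proposition~\ref{proposition:idempotent-automorphism} and Corollary~\ref{galois}); for higher multiplicity it genuinely fails for general domains --- degree-$3$ Blaschke products on $\mathbb D$ and the ellipsoid examples of \cite{Dini-Sel-Pri 1991} are the counterexamples recalled in the paper --- so a real argument specific to irreducible bounded symmetric domains of dimension $>1$ is required. That argument (in \cite{Rud 1982} for the ball and \cite{Mes 1988} in general) proceeds through a fine local analysis of $\pi$ along the branch locus: one shows each irreducible component of $\mathcal J_\pi$ of codimension one gives rise to a local deck transformation fixing that component, uses the structure of the automorphism group of the symmetric domain to extend these local reflections to global automorphisms, and then shows the group they generate already acts transitively on fibres. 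None of this is present in your proposal; supplying it is not a refinement but the core of the theorem.
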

In other words, the deck automorphism group of any proper holomorphic mapping from an irreducible classical Cartan domain in $\mb C^n,\,\,n>1,$ to some domain in $\mb C^n$ is a finite pseudoreflection group.} Our next proposition states that it is enough to identify pseudoreflections (or reflections) on irreducible classical Cartan domains up to a conjugation in order to determine such proper holomorphic mappings (or $2$-proper holomorphic mappings) up to an isomorphism.

\begin{proposition} \label{isomorphic}
Let $D\subset \mb C^n,\,\,n>1,$ be an irreducible classical Cartan domain. Suppose that $\phi_1: D\to D_1$ and $\phi_2:D \to D_2$ be two proper holomorphic mapping with the deck automorphism groups $\Gamma_1$ and $\Gamma_2,$ respectively. Then $\phi_1$ is isomorphic to $\phi_2$ if and only if $\Gamma_1$ is a conjugate to $\Gamma_2$ in $\Aut(D).$
\end{proposition}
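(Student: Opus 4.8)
The plan is to prove the two implications separately. The forward direction (isomorphic $\Rightarrow$ conjugate) will be a formal consequence of the definition of isomorphism, a pure diagram chase, while the reverse direction will rest on the structure theory recalled earlier, namely that a proper holomorphic mapping out of $D$ is determined up to isomorphism by its deck group.

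For the \emph{only if} direction, suppose $\phi_1$ and $\phi_2$ are isomorphic. Since both mappings share the source domain $D$, the first biholomorphism in Definition \ref{iso} is an automorphism $\alpha\in\Aut(D)$, and there is a biholomorphism $\beta:D_1\to D_2$ with $\beta\circ\phi_1=\phi_2\circ\alpha$, equivalently $\phi_2=\beta\circ\phi_1\circ\alpha^{-1}$. I would then take an arbitrary $\rho\in\Gamma_1=\operatorname{Deck}(\phi_1)$ and compute, using $\phi_1\circ\rho=\phi_1$,
\[
\phi_2\circ(\alpha\rho\alpha^{-1})=\beta\circ\phi_1\circ\alpha^{-1}\circ\alpha\rho\alpha^{-1}=\beta\circ(\phi_1\circ\rho)\circ\alpha^{-1}=\beta\circ\phi_1\circ\alpha^{-1}=\phi_2 .
\]
This shows $\alpha\Gamma_1\alpha^{-1}\subseteq\Gamma_2$. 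Running the identical argument on the inverse isomorphism, which is furnished by $\alpha^{-1}:D\to D$ and $\beta^{-1}:D_2\to D_1$ satisfying $\beta^{-1}\circ\phi_2=\phi_1\circ\alpha^{-1}$, yields the reverse inclusion $\Gamma_2\subseteq\alpha\Gamma_1\alpha^{-1}$. Hence $\alpha\Gamma_1\alpha^{-1}=\Gamma_2$ with $\alpha\in\Aut(D)$, which is exactly the asserted conjugacy.

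For the \emph{if} direction, assume $\Gamma_2=a\Gamma_1a^{-1}$ for some $a\in\Aut(D)$. The idea is to transport $\phi_1$ by $a^{-1}$ and then invoke uniqueness. Set $\psi:=\phi_1\circ a^{-1}:D\to D_1$, which is again proper holomorphic. By the fact recorded earlier that a deck automorphism $\rho$ of $\pi$ gives the deck automorphism $a^{-1}\rho a$ of $\pi\circ a$ (applied here with $a$ replaced by $a^{-1}$), I obtain $\operatorname{Deck}(\psi)=a\Gamma_1a^{-1}=\Gamma_2=\operatorname{Deck}(\phi_2)$. Now $\psi$ and $\phi_2$ are two proper holomorphic mappings out of the fixed irreducible classical Cartan domain $D$ carrying the \emph{same} deck group $\Gamma_2$; by Theorem \ref{Meschiari} this group is a finite pseudoreflection group, so each of $\psi$ and $\phi_2$ is isomorphic to the basic polynomial map $\theta$ associated to $\Gamma_2$, and therefore to one another \cite{Gho 2021, Bis-Dat-Gho-Shy 2022}. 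Since $\psi=\phi_1\circ a^{-1}$ is isomorphic to $\phi_1$ (composition with an automorphism of $D$), transitivity of the isomorphism relation gives $\phi_1\sim\psi\sim\phi_2$, as required.

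The principal obstacle is the \emph{if} direction, and specifically the step asserting that two proper holomorphic mappings out of $D$ sharing a common deck group must be isomorphic. This is not automatic: a priori the deck automorphisms are only biholomorphisms of $D$, not linear maps, so the Chevalley–Shephard–Todd/basic-polynomial-map machinery does not apply directly. The resolution is precisely Theorem \ref{Meschiari} (Meschiari), which guarantees that every such deck group is a finite pseudoreflection group and that the mapping is isomorphic to the corresponding basic polynomial map; with this in hand the common-deck-group uniqueness result closes the argument. The forward direction, by contrast, uses no structure theory and follows purely from chasing the defining relation $\beta\circ\phi_1=\phi_2\circ\alpha$.
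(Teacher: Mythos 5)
Your proof is correct, and for one of the two implications it takes a genuinely different (and more elementary) route than the paper. For the forward implication (isomorphic $\Rightarrow$ conjugate) the paper does not do your diagram chase: it invokes Meschiari's theorem (Theorem \ref{Meschiari}) to write $\phi_1=\psi_1\circ\theta_1\circ h_1$ with $\theta_1$ a basic polynomial map, and then reads off that $\operatorname{Deck}(\phi_2)=h_2^{-1}\Gamma_1 h_2$ with $h_2=h_1\circ h$. Your argument --- conjugating an arbitrary $\rho\in\Gamma_1$ by $\alpha$, checking $\phi_2\circ(\alpha\rho\alpha^{-1})=\phi_2$, and obtaining the reverse inclusion from the inverse isomorphism --- uses no structure theory at all: it is valid for proper holomorphic maps between arbitrary domains and shows, in effect, that the Meschiari detour in the paper's own proof of this direction is unnecessary. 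For the reverse implication (conjugate $\Rightarrow$ isomorphic) the two arguments are essentially the same transport-plus-uniqueness scheme: the paper quotes Proposition \ref{proposition:conjugate}, whose proof considers $\theta_1\circ P^{-1}$ and identifies both images with $D/\Gamma_2$ via \cite{Bis-Dat-Gho-Shy 2022}, while you consider $\psi=\phi_1\circ a^{-1}$ directly and match $\psi$ with $\phi_2$ through the basic polynomial map of $\Gamma_2$ via \cite{Gho 2021}. One point you were right to flag, and which deserves emphasis: the uniqueness step (two proper maps out of $D$ with the same deck group $\Gamma_2$ are isomorphic) needs the fibers to be exactly the $\Gamma_2$-orbits, i.e.\ the maps to be Galois; this is false for general proper maps (the deck group of a generic degree-$3$ Blaschke product is trivial), and it is precisely Meschiari's theorem (or, in the $2$-proper case, Corollary \ref{galois}) that supplies it, so your placement of Theorem \ref{Meschiari} before the uniqueness appeal is what makes the step legitimate.
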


\begin{proof}
If $\Gamma_1$ is a conjugate to $\Gamma_2$ then the result follows from Proposition \ref{proposition:conjugate}. 

Suppose that $\phi_1$ is isomorphic to $\phi_2,$ that is, there exist $h \in \Aut(D)$ and $\psi:D_2\to D_1$ biholomorphism such that $$\phi_1\circ h = \psi_\circ \phi_2.$$ Using \cite[p. 18, Main Theorem]{Mes 1988} (cf. Theorem \ref{Meschiari}), we get the following representation of $\phi_1$ : \begin{eqnarray}
\phi_1 = \psi_1 \circ \theta_1 \circ h_1
\end{eqnarray}
where $\theta_1$ denotes the basic polynomial mapping associated to $\Gamma_1,$ $\psi_1 : \theta_1(D) \to D_1$ is a biholomorphism and $h_1 \in \Aut(D).$ Then $\phi_2 = \psi^{-1}\circ  \phi_1\circ h = \psi^{-1}\circ  \psi_1 \circ \theta_1 \circ h_1\circ h.$ Clearly, $h_2 = h_1\circ h \in \Aut(D)$ and the deck automorphism group of $\phi_2$ is given by $h_2^{-1}\Gamma_1 h_2$ which is $\Gamma_2$ by assumption. This completes the proof.
\end{proof}

In particular, if $\phi_1$ and $\phi_2$ are $2$-proper holomorphic mappings then $\Gamma_i$'s are reflection groups (cf. Remark~\ref{remark:only-reflections}). Recall that in \cite[p. 703, Theorem 3]{Got 1969}, a classification of all reflections, unique up to a conjugation in $\Aut(D),$ is given for irreducible classical Cartan domains $D.$ Following that classification, we enlist below basic polynomial maps associated to the groups which are generated by those reflections in $D.$ Each proper holomorphic mapping of the list is non-isomorphic to each other. 

\textcolor{black}{\begin{proposition}\label{list}
    Let $D$ be an irreducible classical Cartan domain and $G$ be a domain in $\mb C^n.$ A  proper holomorphic mapping $\phi: D \to G$ of multiplicity $2$ is isomorphic (as in Definition \ref{iso}) to exactly one of the following:
    \begin{enumerate}
        \item[1.] $\phi_1 :\mathbb D\to\mathbb D$ such that $\phi_1(\lambda)=\lambda^2.$
        \item[2.] For $n>1,$ the $2$-proper holomorphic mapping $\phi_2 :\mathbb B_n \to \mathcal E(1/2,1,\ldots,1)$ which is defined by $$ \phi_2(z)= (z_1^2,z_2,\ldots,z_n),$$ where $\mathcal E(1/2,1,\ldots,1):=\{z\in\mathbb C^n:|z_1|+|z_2|^2+\ldots+|z_n|^2<1\}$ denotes the complex ellipsoid. 
        \item[3.] The $2$-proper holomorphic mapping $\phi_3:\mathcal R_{III}(2)\to\mathbb E$ which is defined by \begin{eqnarray}\label{rf}\phi_3(A)=(a_{11},a_{22}, a_{11}a_{22}-a^2)\,\,\, \text{ for } A = \begin{pmatrix}
          a_{11} & a\\
          a & a_{22}
        \end{pmatrix} \in \mathcal R_{III},\end{eqnarray} where  $\mb E:=\phi_3(\mathcal R_{III}(2))$ refers to the tetrablock.
        \item[4.]  The $2$-proper holomorphic mapping $\phi_4:\mathcal R_I(2\times 2)\to\phi_4(\mathcal R_I(2\times 2)):=\mathbb F$ which is defined by $$\phi_4(A) = (a_{11},a_{22}, a_{11}a_{22}-a_{12}a_{21},a_{12}+a_{21})\,\,\, \text{ for } A = \begin{pmatrix}
          a_{11} & a_{12}\\
          a_{21} & a_{22}
        \end{pmatrix} \in \mathcal R_I(2\times 2).$$
        \item[5.] For $n\geq 5,$ the $2$-proper holomorphic mapping $\Lambda_n:L_n \to \Lambda_n(L_n):=\mathbb L_n$ which is defined by \begin{eqnarray}\label{rfff} \Lambda_n(z)= (z_1^2,z_2,\ldots,z_n).\end{eqnarray}
    \end{enumerate}
\end{proposition}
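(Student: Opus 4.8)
The plan is to reduce the whole classification to a group-theoretic statement about reflections and then read the answer off Gottschling's list. First I would dispose of the one-dimensional case separately, since Theorem~\ref{Meschiari} requires $n>1$: the only irreducible classical Cartan domain in $\mathbb{C}^1$ is the unit disc $\mathbb{D}$, and the dimension-one examples already show that every $2$-proper holomorphic mapping on $\mathbb{D}$ is isomorphic to $\phi_1(\lambda)=\lambda^2$. This gives case~$1$.

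For $n>1$, let $\phi:D\to G$ be $2$-proper with $\Deck(\phi)=\{\id,F\}$ (Corollary~\ref{galois}). Since $D$ is convex, $\Fix(F)\neq\emptyset$ (Proposition~\ref{proposition:fix-non-empty}), and because $G$ is a domain in $\mathbb{C}^n$ the fixed-point set has dimension $n-1$ (Remark~\ref{remark:only-reflections}); moving a fixed point to the origin and invoking Cartan's theorem forces $F$ to be a linear reflection. By Theorem~\ref{Meschiari} the map $\phi$ is then isomorphic to the basic polynomial map of the reflection group $\{\id,F\}$, and by Proposition~\ref{isomorphic} two such maps are isomorphic exactly when their reflections are conjugate in $\Aut(D)$. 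Thus the entire classification becomes the problem of listing reflections of irreducible classical Cartan domains up to conjugacy.

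At this point I would invoke \cite[p.~703, Theorem~3]{Got 1969}, which classifies precisely these reflections, and then, for each conjugacy class, compute the associated basic polynomial map by diagonalizing $F$ so that $F:(z_1,z')\mapsto(-z_1,z')$ in suitable linear coordinates and taking the invariants $(z_1^2,z')$. Concretely I would verify that this procedure produces: the complex ellipsoid $\mathcal{E}(1/2,1,\dots,1)$ from $\mathbb{B}_n=\mathcal{R}_I(1\times n)$ (case~$2$); the tetrablock from the reflection $a\mapsto-a$ on $\mathcal{R}_{III}(2)$, after rewriting $a^2=a_{11}a_{22}-\det A$ (case~$3$); the domain $\mathbb{F}$ from the transpose involution $a_{12}\leftrightarrow a_{21}$ on $\mathcal{R}_I(2\times 2)$, whose invariants are $a_{11},a_{22},a_{12}+a_{21},\det A$ (case~$4$); and $\mathbb{L}_n$ from $z\mapsto(-z_1,z_2,\dots,z_n)$ on $L_n$ (case~$5$). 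The low-dimensional coincidences $\mathcal{R}_{III}(2)\cong L_3$ and $\mathcal{R}_I(2\times 2)\cong L_4$, together with $\mathcal{R}_{IV}(2)\cong\mathbb{D}^2$ being reducible and hence outside the scope, explain why the genuinely new type~$IV$ family only begins at $n\geq 5$.

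Finally, for the \emph{exactly one} assertion I would argue that the five maps are pairwise non-isomorphic: distinct cases have source domains of different dimension or different biholomorphism type (the Euclidean ball $\mathbb{B}_n$ is not biholomorphic to the Lie ball $L_n$, nor to $\mathcal{R}_{III}(2)$ or $\mathcal{R}_I(2\times 2)$), so by Definition~\ref{iso} no isomorphism between them exists, while within a single domain Gottschling's uniqueness up to conjugacy combined with Proposition~\ref{isomorphic} forbids two non-isomorphic maps. The main obstacle I anticipate is the bookkeeping in the third step: correctly extracting Gottschling's list, confirming that no other irreducible classical Cartan domain (in particular no higher type~$II$ or type~$III$ domain) admits a reflection, and matching each image with the ellipsoid, tetrablock, $\mathbb{F}$, and $\mathbb{L}_n$ through the low-dimensional isomorphisms. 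Everything else is a formal consequence of Theorem~\ref{Meschiari} and Proposition~\ref{isomorphic}.
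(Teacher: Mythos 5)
Your proposal is correct and follows essentially the same route as the paper's proof: Meschiari's theorem (Theorem \ref{Meschiari}) reduces the classification to listing reflections up to conjugacy in $\Aut(D)$ via Proposition \ref{isomorphic}, Gottschling's theorem \cite[p. 703, Theorem 3]{Got 1969} supplies that list, and the basic polynomial maps are then computed case by case. You in fact treat two points more carefully than the paper's own write-up --- the one-dimensional disc case (which Meschiari's theorem, stated only for $n>1$, does not cover) and the explicit pairwise non-isomorphism argument via non-biholomorphic source domains --- but these are refinements of the same argument rather than a different approach.
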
    }
    


\textcolor{black}{ \begin{proof}
    As a particular case of \cite[p. 18, Main Theorem]{Mes 1988} (cf. Theorem \ref{Meschiari}), we get that a proper holomorphic mapping $\phi$ of multiplicity $2$ from an irreducible classical Cartan domain is isomorphic to the basic polynomial of maps of some reflection group of order $2.$ So it is enough to find reflections in $\Aut(D),$ unique up to a conjugation in $\Aut(D)$ (cf. Proposition \ref{isomorphic}). Invoking Gottschling's result \cite[p. 703, Theorem 3]{Got 1969} for irreducible classical Cartan domains $D,$ a classification of all reflections in $\Aut(D)$ can be obtained. This provides the above list of irreducible classical Cartan domains $D$ such that there exist reflections in $\Aut(D)$.  The above list contains basic polynomial maps associated to the groups generated by those reflections.
    Combining it with Theorem \ref{Meschiari}, we conclude that the proper holomorphic map $\phi$ has to be isomorphic to at least one of them and since those are non-isomorphic to each other, the result follows. 
\end{proof}}
\textcolor{black}{\begin{remark}
\begin{enumerate}
    \item  Since there does not exist any reflection on the exceptional Cartan domains of dimensions $16$ and $27$ (cf. \cite{Mes 1972}), the above list is exhaustive for all irreducible Cartan domains.
    \item Results in similar direction as in Proposition \ref{list} can be found in \cite[p. 33, Corollary]{Mes 1988} and \cite{Mok-Ng-Tu 2010}. 
\end{enumerate}
\end{remark} }

We use Proposition \ref{list} and Gottschling's result to prove the following for reducible Cartan domains.
\begin{proposition}
   \textcolor{black}{ Let $D$ be a bounded symmetric domain such that $D= D_1^{k_1} \times \cdots \times D_r^{k_r}$ for non-equivalent irreducible Cartan domains $D_i : i=1,\ldots,r$ and $G$ be a domain in $\mb C^n.$ Suppose that there exists a proper holomorphic mapping $\phi: D \to G$ of multiplicity $2$ then $G$ is biholomorphic to exactly one of the following:
\begin{enumerate}
    \item[1.] $\text{ the unit disc } \mb D,$
    \item[2.] $\text{ the complex ellipsoid }\mathcal E(1/2,1,\ldots,1),$
    \item[3.] $\text{ the tetrablock } \mb E,$
    \item[4.] the domain $\mb F,$
    \item[5.] the domains $\mb L_n, n\geq 5,$
    \item[6.] the symmetrized bidisc $\mb G_2\text{ and,}$
    \item[7.] the Cartesian product of any one of the above domains and a bounded symmetric domain.
\end{enumerate} }
\end{proposition}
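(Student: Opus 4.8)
The plan is to reduce the general bounded symmetric domain $D = D_1^{k_1} \times \cdots \times D_r^{k_r}$ to the irreducible case already handled in Proposition \ref{list}, by analyzing how the deck automorphism (an involution $g \in \Aut(D)$ with $\Fix(g) = \mathcal J_\phi$ a hypersurface, guaranteed by Proposition \ref{proposition:idempotent-automorphism} and Remark \ref{remark:only-reflections}) interacts with the product decomposition. First I would invoke the structure theory of automorphisms of bounded symmetric domains: since $\Aut(D)$ is transitive, I assume $g$ fixes the origin, so $g$ is linear by Cartan's theorem. The key structural fact is that $\Aut(D)$ permutes the irreducible factors among those that are mutually biholomorphic, and within each equivalence class $D_i^{k_i}$ the automorphism group is (up to the connected identity component) the wreath-type product $\Aut(D_i) \wr S_{k_i}$. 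Thus the involution $g$ decomposes into a permutation part (swapping biholomorphic factors) and a part acting within factors.

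The main case analysis then proceeds by asking where the rank-one defect of $\id - g$ lives. Since $\Fix(g)$ is a hyperplane, $g$ is a reflection, so $g$ acts nontrivially on exactly one ``coordinate direction'' and as the identity elsewhere. I would argue that $g$ cannot genuinely swap two distinct irreducible factors of positive dimension: such a swap would force $\Fix(g)$ to have codimension at least $\dim D_i \geq 1$ coming from the antidiagonal, and if $\dim D_i > 1$ this violates the hyperplane condition (this is exactly the obstruction that ruled out $g = -\id$ on $\B_2$ and $\D^2$ in Subsection \ref{couext}, where the generalized Neil parabola failed to be a domain). The only way a swap is compatible with $\rk(\id - g) = 1$ is when the swapped factors are one-dimensional, i.e.\ two copies of $\mathbb D$; this produces precisely the symmetrization map and yields the symmetrized bidisc $\mathbb G_2$ (case 6). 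If instead $g$ acts nontrivially only within a single irreducible factor $D_i$ (the identity on all other factors), then the restriction $g|_{D_i}$ is itself a reflection of that irreducible classical Cartan domain, so Proposition \ref{list} applies directly and the $2$-proper image is $\mathbb D$, $\mathcal E(1/2,1,\ldots,1)$, $\mathbb E$, $\mathbb F$, or $\mathbb L_n$ ($n \geq 5$) — and the remaining untouched factors ride along as a Cartesian product, giving case 7.

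The hard part will be rigorously justifying that these are the only two scenarios and that they are mutually exclusive and exhaustive. Concretely, I must show that a reflection of $D$ cannot mix a nontrivial within-factor action with a factor swap, and that the eigenspace for eigenvalue $-1$ (which is one-dimensional) must lie entirely inside a single irreducible summand's tangent space or inside an antidiagonal of two one-dimensional summands. I expect to handle this by examining the action of $g$ on the tangent space $T_0 D = \bigoplus_i (T_0 D_i)^{\oplus k_i}$: because $g$ commutes with the isotropy representation structure and permutes isotypic blocks, the $(-1)$-eigenspace is a single $g$-irreducible piece, and a one-dimensional such piece forces either a within-factor reflection (the content of Gottschling's classification \cite{Got 1969} via Proposition \ref{list}) or the swap of two one-dimensional factors. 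Finally I would note exhaustiveness follows because every reflection is accounted for by one of these normal forms, and exclusivity follows since the listed target domains are pairwise non-biholomorphic (as recorded in Proposition \ref{list}), while any extra idle factors are absorbed into the Cartesian-product clause, completing the classification.
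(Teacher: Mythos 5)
Your proposal is correct, and its skeleton matches the paper's: reduce to the involutive deck transformation, normalize it (via transitivity and Cartan's theorem) to a linear reflection, use the structure of $\Aut(D)$ for a product of irreducible factors, obtain the dichotomy ``reflection inside a single factor'' versus ``swap of two copies of $\mathbb D$'', then invoke Proposition \ref{list}, with the untouched factors giving the Cartesian-product clause. The genuine difference lies in how the dichotomy is established. The paper takes it directly from Gottschling \cite[p. 702, Theorem 2; p. 703]{Got 1969}: a reflection of $D$ is either the identity on all but one irreducible factor, or (when disc factors are present) acts on a pair of disc factors. You instead derive it from the hyperplane condition by a codimension count on the tangent space: writing the linear involution as a permutation of mutually biholomorphic factors composed with factor-wise automorphisms, each $2$-cycle swapping $d$-dimensional factors contributes exactly $d$ to $\rk(\id-g)$ (involutivity forces the form $(x,y)\mapsto(\sigma(y),\sigma^{-1}(x))$, whose fixed set is the graph of $\sigma$), while each factor on which $g$ acts nontrivially contributes at least $1$; so rank one forces exactly one within-factor reflection or one swap of two one-dimensional factors. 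This is a correct and more self-contained route --- it needs only the semi-direct product structure of $\Aut(D)$, not Gottschling's classification of reflections in reducible domains --- and it explains, rather than cites, why nothing like the generalized Neil parabola can arise here. What the paper's citation buys is brevity and an external guarantee that no case is overlooked. Two points to tighten in your write-up: replace the vague claim that ``the $(-1)$-eigenspace is a single $g$-irreducible piece'' by the explicit rank-additivity computation sketched above, and note that a twisted swap of two discs is conjugate in $\Aut(\mathbb D^2)$ to the plain transposition (cf. Remark \ref{nonex}), so that its quotient is indeed $\mathbb G_2$.
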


 \begin{proof}
 \textcolor{black}{Recall from Proposition \ref{proposition:idempotent-automorphism} and Corollary \ref{galois} that if there exists a $2$-proper holomorphic mapping $\phi: D \to G$ then there exists an involutive automorphism $g$ in $\Aut(D)$ which fixes a hyperplane. Moreover, ${\rm Deck}(\phi)$ is the group generated by $g$ and $G$ is biholomorphic to $D/{\rm Deck}(\phi)$ and thus $G$ is biholomorphic to the image of $D$ under the basic polynomial map of ${\rm Deck}(\phi)$ by analytic Chevalley-Shephard-Todd theorem. Now we mention the only possibilities of existing such an element $g$ in $\Aut(D)$ for the bounded symmetric domains $D$ of afore-mentioned form.}
 
     \textcolor{black}{Note that $\Aut(D)$ is given by the semi-direct product of $\mathfrak S(k_1) \times \cdots \times \mathfrak S(k_r)$ and  $\Aut(D_1)^{k_1} \times \cdots \times \Aut(D_r)^{k_r},$ where $\mathfrak S(n)$ denotes the permutation group on $n$ symbols \cite[p. 702, Theorem 1]{Got 1969}. From \cite[p. 702, Theorem 2; p. 703]{Got 1969}, we observe that such involutive automorphism $g$ (reflection) can occur in $\Aut(D)$ in the only two manners: $1)$ if none of $D_i$'s is biholomorphic to the unit disc $\mb D$ in the complex plane, the reflection can occur in $\Aut(D_1)^{k_1} \times \cdots \times \Aut(D_r)^{k_r}$ and such a reflection must be the identity on all but one of the factors of $D$. $2)$ Suppose that one of $D_i$ is biholomorphic to $\mb D,$ then if $k_i=1$ then a reflection can occur only in $\Aut(D_i)$ and if $k_i\geq 2$ then additionally a reflection can occur in $\Aut(D_i)^2$ as well. We enlist below basic polynomial maps associated to the groups generated by those reflections. \begin{enumerate}
    \item[1.] If exactly one (say $D_{1}$) is biholomorphic to $\mathbb D$ with $k_{1}=1$ or none of $D_i$'s is biholomorphic to the unit disc $\mathbb D$ but at least one $D_i$ is biholomorphic to any domain considered in 2-5 then any $2$-proper holomorphic mapping $\widetilde{\Phi} : D \to G$ is isomorphic to $$(w,z)\to (\Phi(w),z)\in \Phi(D_1)\times D_1^{k_1-1} \times \cdots \times D_r^{k_r} ,$$ where $D_1$ is any domain considered in 1-5 of Proposition \ref{list} and $\Phi$ is the corresponding $2$-proper holomorphic mapping.
     \item[2.] Recall the symmetrization map defined by \begin{eqnarray}\label{rff} \pi:\mathbb D^2\owns z\to (z_1+z_2,z_1z_2)\in\mathbb G_2.\end{eqnarray} If some $D_i$  is biholomorphic to $\mathbb D$ with $k_i\geq 2,$ then a $2$-proper holomorphic mapping $\widetilde{\Phi} : D \to G$ is isomorphic to either \begin{eqnarray*}
        (w,z)&\to& (\pi(w),z)\in \mb G_2\times D_1^{k_1-2}\times \cdots \times D_r^{k_r}\end{eqnarray*} where $D_1$ is biholomorphic to $\mb D$ or \begin{eqnarray*}
        (w,z)\to (\Phi(w),z)\in \Phi(D_1)\times D_1^{k_1-1} \times \cdots \times D_r^{k_r}
    \end{eqnarray*} where $D_1$ is any domain considered in 1-5 of Proposition \ref{list} and $\Phi$ is the corresponding $2$-proper holomorphic mapping.
\end{enumerate} Thus we get the above list of possible images of $2$-proper holomorphic mappings from bounded symmetric domains (up to biholomorphisms).}\end{proof}

\begin{remark}\label{clear} The unit disc $\mathbb D$ and the complex ellipsoid $\mathcal E(1/2,1,\ldots,1)$ are standard domains in literature. Recently, the geometry of $\mathbb G_2$ and $\mathbb E$ has been extensively studied and a lot is known. Motivated by those, a similar study is conducted to verify the same properties for the new domain $\mathbb F$ and the family of domains $\mathbb L_n,\,n\geq 5$. Nevertheless, before doing that, below we establish biholomorphisms between lower dimensional $\mb L_n$ for $n=2,3,4$ (defined appropriately) and $\mathbb G_2$, $\mathbb E,\mathbb F$ respectively. Thus our study can be reduced to the family of domains $\mathbb L_n$, $n\geq 2,$ since it includes all the domains which are not standard and arising in the above list. 
\end{remark}

\subsection{Lower dimensional domains}
It follows from Siegel's result that the lower dimensional Lie balls are biholomorphic to other classical domains. More precisely, $L_2$ is biholomorpic to $\mathbb D^2$, $L_3$ is biholomorphic to $\mathcal R_{III}(2)$ and $L_4$ is biholomorphic to $\mathcal R_I(2\times 2)$. This observation is crucial in the reduction of the discussion of the $2$-proper holomorphic images of classical Cartan domains to the case of the Lie balls. We therefore present the explicit transformations of the domains in the following lemma which is also a direct generalization of \cite[Lemma 3]{Coh-Col 1994}.

\begin{lemma}\label{bimap}
    Let $z\in\mathbb C^n$. Then the following hold :

    \begin{itemize} 
    \item   $z\in L_2$ if and only if $(z_1+iz_2,-z_1+iz_2)\in\mathbb D^2$,\\
    \item  $z\in L_3$ if and only if $\begin{bmatrix} z_1+iz_2 & z_3 \\ z_3 & -z_1+iz_2\end{bmatrix}\in\mathcal R_{III}(2)$,\\
    \item  $z\in L_4$ if and only if $\begin{bmatrix} z_1+iz_2 & z_3+iz_4 \\ z_3-iz_4 & -z_1+iz_2\end{bmatrix}\in\mathcal R_{I}(2\times 2)$.
    \end{itemize} 
\end{lemma}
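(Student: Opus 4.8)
The plan is to prove all three equivalences at once by exploiting the reformulation of the Lie-ball condition recorded just after the definition of $L_n$: namely $z\in L_n$ if and only if $\|z\|<1$ and $2\|z\|^2<1+|z\bullet z|^2$. In each of the three items the target object is a $2\times2$ matrix $A=A(z)$; for the first item I would regard the pair $(z_1+iz_2,-z_1+iz_2)\in\mathbb D^2$ as the diagonal matrix $A=\mathrm{diag}(z_1+iz_2,\,-z_1+iz_2)$, using that $(w_1,w_2)\in\mathbb D^2$ if and only if $\|A\|<1$. In this way membership in the bidisc becomes the same operator-norm condition $\mathbb I_2-AA^*>0$ that defines $\mathcal R_I(2\times2)$ and $\mathcal R_{III}(2)$, so the three items are handled by one computation.

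First I would record two algebraic identities common to all cases. A direct expansion gives $\det A=-z\bullet z$ throughout: in the $n=2$ item $\det A=(z_1+iz_2)(-z_1+iz_2)=-(z_1^2+z_2^2)$, while in the $n=3,4$ items the off-diagonal products $z_3^2$ and $(z_3+iz_4)(z_3-iz_4)=z_3^2+z_4^2$ combine with $(z_1+iz_2)(-z_1+iz_2)=-(z_1^2+z_2^2)$ to give $-z\bullet z$. For the Hilbert--Schmidt norm, using $|z_1+iz_2|^2+|-z_1+iz_2|^2=2(|z_1|^2+|z_2|^2)$ and the analogous identity $|z_3+iz_4|^2+|z_3-iz_4|^2=2(|z_3|^2+|z_4|^2)$, one finds $\operatorname{tr}(AA^*)=2\|z\|^2$ in every item. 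Thus in all three cases $\det A=-z\bullet z$ and $\operatorname{tr}(AA^*)=2\|z\|^2$.

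The core step is then the positive-definiteness criterion for a $2\times2$ Hermitian matrix: $\mathbb I_2-AA^*>0$ if and only if $\operatorname{tr}(\mathbb I_2-AA^*)>0$ and $\det(\mathbb I_2-AA^*)>0$ (positive trace and positive determinant force both eigenvalues positive). Substituting the identities above, and using $\det(\mathbb I_2-M)=1-\operatorname{tr}(M)+\det(M)$ together with $\det(AA^*)=|\det A|^2=|z\bullet z|^2$, I obtain
\[
\operatorname{tr}(\mathbb I_2-AA^*)=2-2\|z\|^2,\qquad \det(\mathbb I_2-AA^*)=1-2\|z\|^2+|z\bullet z|^2.
\]
These two quantities are positive precisely when $\|z\|<1$ and $2\|z\|^2<1+|z\bullet z|^2$, which by the reformulation in Subsection \ref{car} is exactly $z\in L_n$. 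This simultaneously establishes the three equivalences.

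The computations are entirely routine, so there is no serious obstacle; the only points requiring care are the bookkeeping in the Hilbert--Schmidt and determinant expansions and, for the bidisc item, the observation that membership in $\mathbb D^2$ coincides with the operator-norm condition on the diagonal matrix, so that the same Hermitian criterion applies. If one prefers to avoid the diagonal embedding, the $n=2$ case can instead be closed by the elementary equivalence $(a<1\text{ and }b<1)\iff(a+b<2\text{ and }(1-a)(1-b)>0)$ applied to $a=|z_1+iz_2|^2$ and $b=|-z_1+iz_2|^2$, which reproduces the same two inequalities.
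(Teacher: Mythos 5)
Your proof is correct and follows essentially the same route as the paper: both reduce matters to the positive definiteness of $\mathbb I_2-AA^*$, test it via the trace and determinant of a $2\times2$ Hermitian matrix, and recognize the resulting inequalities $\|z\|<1$ and $2\|z\|^2<1+|z\bullet z|^2$ as the Lie-ball condition. The only difference is minor: the paper settles $n=2$ by citing \cite{Coh-Col 1994} and verifies the inequalities for $n=4$ (deducing $n=3$ by setting $z_4=0$), whereas you treat all three cases uniformly via the diagonal-matrix view of $\mathbb D^2$ --- a self-contained touch, but the underlying computation is identical.
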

\begin{proof}
The case $n=2$ is done in \cite[Lemma 3(b)]{Coh-Col 1994}. For $n=3$ and $n=4,$ we consider a $2\times 2$ complex matrix $A$ and remark that $A$ is such that $\mathbb I_2-AA^*>0$ if and only if
\begin{align}\label{positive-definite-1}
2>&|a_{11}|^2+|a_{12}|^2+|a_{21}|^2+|a_{22}|^2,\\
    1+|a_{11}a_{22}-a_{12}a_{21}|^2>&|a_{11}|^2+|a_{22}|^2+|a_{12}|^2+|a_{21}|^2.\label{positive-definite-2}
\end{align}
Now we make the substitution $A=\begin{bmatrix} z_1+iz_2 & z_3+iz_4 \\ z_3-iz_4 & -z_1+iz_2\end{bmatrix}$ to get that the formulae in Equation \eqref{positive-definite-1} and Equation \eqref{positive-definite-2} which read as
\begin{align*}
 1>&||z||^2,\\
    1+|z\bullet z|^2&> 2||z||^2,   
\end{align*}
    which proves the case $n=4$. Substituting $z_4=0,$ we get the result for $n=3.$
\end{proof}
A direct consequence of Lemma \ref{bimap}  provides formulae for explicit biholomorphisms (permutation of coordinates is required in some cases) of $\mathbb G_2$ to $\mathbb L_2$, $\mathbb E$ to $\mathbb L_3$ and $\mathbb F$ to $\mathbb L_4$. 
\begin{corollary}\label{bimap2} The following formulae provide biholomorphisms between the corresponding domains:
\begin{itemize}
    \item $\mathbb L_2\owns w\to (2iw_2,-w_1-w_2^2)\in \mathbb G_2$,
    \item $\mathbb L_3\owns w\to (w_2+iw_3,-w_2+iw_3,-w_3^2-w_2^2-w_1)\in\mathbb E$,
    \item $\mathbb L_4\owns w\to (w_3+iw_4,-w_3+iw_4,-w_2^2-w_3^2-w_4^2-w_1,2w_2)\in\mathbb F$.
    \end{itemize}
    \end{corollary}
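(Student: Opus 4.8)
The plan is to obtain each of the three biholomorphisms as the map induced on the $2$-proper quotients by the identifications of Lemma~\ref{bimap}, via Proposition~\ref{proposition:conjugate}. For $n\in\{2,3,4\}$ I would exhibit a (linear) biholomorphism $\sigma_n$ of $L_n$ onto the relevant classical Cartan domain ($\mathbb D^2$, $\mathcal R_{III}(2)$, $\mathcal R_I(2\times2)$, respectively) with the property that $\sigma_n$ carries the deck automorphism of $\Lambda_n$ onto the deck automorphism of the corresponding basic polynomial map, namely the symmetrization map onto $\mathbb G_2$, the map $\phi_3$ onto $\mathbb E$, and $\phi_4$ onto $\mathbb F$. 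Since $\Lambda_n(z)=(z_1^2,z_2,\ldots,z_n)$ has deck automorphism $z\mapsto(-z_1,z_2,\ldots,z_n)$, the entire task reduces to arranging that under $\sigma_n$ this single sign change is conjugated onto the generator of the deck group of the target map.

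First I would record that $L_n$ is invariant under every permutation of its coordinates, so the identifications of Lemma~\ref{bimap} may be precomposed with a coordinate permutation and still land in the same Cartan domain; this freedom is exactly what is required, because the target deck automorphisms act in structurally different positions. For the symmetrization map $\mathbb D^2\to\mathbb G_2$ it is the interchange of the two disc coordinates, for $\phi_3$ it is the sign change of the off-diagonal entry of a symmetric matrix, and for $\phi_4$ it is the transpose (equivalently, the sign change of the imaginary part of the off-diagonal entries). For $n=2$ no permutation is needed, since $z\mapsto(z_1+iz_2,-z_1+iz_2)$ sends $z\mapsto(-z_1,z_2)$ to the coordinate interchange. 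For $n=3$ I would place the squared variable $z_1$ in the off-diagonal slot by using $\sigma_3(z)=\begin{bmatrix} z_2+iz_3 & z_1\\ z_1 & -z_2+iz_3\end{bmatrix}$, so that $z_1\mapsto-z_1$ becomes the off-diagonal sign change. For $n=4$ I would use $\sigma_4(z)=\begin{bmatrix} z_3+iz_4 & z_2+iz_1\\ z_2-iz_1 & -z_3+iz_4\end{bmatrix}$, so that $z_1\mapsto-z_1$ becomes the transpose. Each such $\sigma_n$ is a coordinate permutation of the map in Lemma~\ref{bimap}, hence a biholomorphism onto the stated Cartan domain.

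Once the deck automorphisms are matched, both $\Lambda_n$ and $\theta_n\circ\sigma_n$ (where $\theta_n$ denotes the target basic polynomial map) are $2$-proper maps on $L_n$ with the \emph{same} deck group, so Proposition~\ref{proposition:conjugate} (applied with $P=\id$) yields a biholomorphism $h_n$ between $\mathbb L_n$ and the target domain satisfying $h_n\circ\Lambda_n=\theta_n\circ\sigma_n$. Evaluating at any preimage $z$ of $w=\Lambda_n(z)$ then gives the closed form $h_n(w)=\theta_n(\sigma_n(z))$, which is well defined precisely because the two deck automorphisms agree. Substituting $w_1=z_1^2$ and $w_j=z_j$ for $j\ge2$ and carrying out the short determinant-and-product computation reproduces the three displayed formulae; for instance, in the case $n=3$ one gets $\theta_3(\sigma_3(z))=(z_2+iz_3,\,-z_2+iz_3,\,-z_1^2-z_2^2-z_3^2)=(w_2+iw_3,\,-w_2+iw_3,\,-w_1-w_2^2-w_3^2)$, and analogously for $n=2,4$.

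The only genuine point requiring care—the main obstacle—is selecting the coordinate permutation inside $\sigma_n$ so that the generator $z_1\mapsto-z_1$ of $\operatorname{Deck}(\Lambda_n)$ is conjugated exactly onto the generator of the deck group of the target map (interchange, off-diagonal sign, or transpose). Once this matching is verified, both the existence of $h_n$ and its explicit form follow formally from Proposition~\ref{proposition:conjugate}, and the remaining verifications are routine algebraic substitutions.
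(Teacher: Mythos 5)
Your proposal is correct and follows essentially the same route as the paper: the paper derives Corollary \ref{bimap2} directly from Lemma \ref{bimap}, noting only that ``permutation of coordinates is required in some cases,'' and your argument fills in precisely those details (choosing the permutation so that the deck automorphism $z_1\mapsto -z_1$ of $\Lambda_n$ is carried onto the deck automorphism of the symmetrization map, $\phi_3$, or $\phi_4$, and then passing to the induced map on the $2$-proper quotients). The explicit matrices $\sigma_3$, $\sigma_4$ and the resulting formulae you compute agree with the ones stated in the corollary.
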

The above formula gives another way of representing the symmetrized bidisc, see \cite{Agl-You 2001} or a recent paper \cite{Bha-Bis-Mai 2022} where an unexpected link between the symmetrized bidisc and the domain from Isaev's list of Kobayashi hyperbolic domains with big group of automorphisms is established. In other words, to study these newly defined domains along with the symmetrized bidisc and the tetrablock, it is enough to consider $\mathbb L_n$, $n\geq 2$. Though it is not required to introduce the domain $\mathbb F$ separately, it seems that this representation of $\mathbb L_4$ is easier to construct many objects and properties, for example, Carath\'eodory extremal functions or the proof of $\mathbb C$-convexity of $\mathbb F$.

\section{Geometry of $\mathbb L_n$}
This section is dedicated on the study of the complex geometry of $\mathbb L_n$. Evidently, $w\in\mathbb L_n$ if and only if $w\in\mathcal E(1/2,1\ldots,1)$ and 
\begin{equation*}
\sqrt{\left(|w_1|+\sum_{j=2}^n|w_j|^2\right)^2-\left|w_1+\sum_{j=2}^nw_j^2\right|^2}<1-\left(|w_1|+\sum_{j=2}^n|w_j|^2\right).
\end{equation*}
Let $k_j,\,j=1,\ldots,n$ be positive integers. A domain $D\subset \mathbb C^n$ is said to be a {\it $(k_1,\ldots,k_n)$-balanced} domain if for any $z\in D$ and $|\lambda|\leq 1,$ $(\lambda^{k_1}z_1,\ldots,\lambda^{k_n}z_n)\in D$ (cf. \cite{Nik 2006}). The domain $\mathbb L_n$ is a bounded hyperconvex $(2,1,\ldots,1)$-balanced domain.  
\subsection{Shilov boundary}
  Let $\partial_s D$ denote the Shilov boundary of the domain $D.$ 

\begin{proposition} The Shilov boundary of $\mathbb L_n$ is given by
\begin{equation*}
\partial_s \mathbb L_n=\Lambda_n(\partial_s L_n),
\end{equation*}
where $\Lambda_n$ is as defined in Equation \eqref{rfff}.
\end{proposition}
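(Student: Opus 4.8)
The plan is to prove the set equality $\partial_s \mathbb{L}_n = \Lambda_n(\partial_s L_n)$ by exploiting the general principle that Shilov boundaries transform predictably under proper holomorphic mappings. The key conceptual tool is that the pullback of a holomorphic function on $\mathbb{L}_n$ via $\Lambda_n$ is a holomorphic function on $L_n$ (and conversely, $\Gamma$-invariant functions on $L_n$ descend to $\mathbb{L}_n$, where $\Gamma = \{\id, g\}$ is the deck group from Corollary \ref{galois}). First I would recall that $L_n$ is a bounded symmetric domain, so its Shilov boundary $\partial_s L_n$ is well understood and is invariant under $\Aut(L_n)$; in particular it is invariant under the reflection $g$ generating $\operatorname{Deck}(\Lambda_n)$, so $\Lambda_n(\partial_s L_n)$ is well-defined as the image of a $g$-invariant set.

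The forward inclusion $\partial_s \mathbb{L}_n \subseteq \Lambda_n(\partial_s L_n)$ and its reverse are handled via the maximum modulus principle on both sides. For the inclusion $\Lambda_n(\partial_s L_n) \subseteq \partial_s \mathbb{L}_n$: given a function $f \in \mathcal{O}(\mathbb{L}_n) \cap \mathcal{C}(\overline{\mathbb{L}_n})$, its pullback $f \circ \Lambda_n$ is continuous on $\overline{L_n}$ and holomorphic on $L_n$, so $|f \circ \Lambda_n|$ attains its maximum on $\partial_s L_n$. Since $\Lambda_n$ is proper and surjective (hence maps $\partial L_n$ onto $\partial \mathbb{L}_n$ appropriately), the maximum of $|f|$ over $\overline{\mathbb{L}_n}$ equals the maximum of $|f \circ \Lambda_n|$, which is realized on $\partial_s L_n$; pushing forward shows $|f|$ achieves its max on $\Lambda_n(\partial_s L_n)$. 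This makes $\Lambda_n(\partial_s L_n)$ a closed boundary, so it contains the Shilov boundary. For the reverse inclusion, I would argue minimality: one shows that no proper closed subset of $\Lambda_n(\partial_s L_n)$ can be a boundary for $\mathcal{O}(\mathbb{L}_n)$, using that $\partial_s L_n$ is the minimal closed boundary for $L_n$ together with the fact that separating functions on $L_n$ can be chosen $g$-invariant (symmetrizing $f(z) + f(g(z))$ or taking the basic polynomials) so that they descend to functions on $\mathbb{L}_n$ that separate the corresponding points.

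The main obstacle I anticipate is the reverse inclusion, specifically verifying that the symmetrization/descent of peak functions preserves the peaking property. Concretely, given a peak point $p \in \partial_s L_n$ with peak function $h$, one must produce a function on $\mathbb{L}_n$ peaking at $\Lambda_n(p)$; the naive symmetrized function $h(z) + h(g(z))$ need not peak if $g(p) \neq p$ lies in a different fiber. The careful resolution is to note that the two preimages $\{p, g(p)\}$ of a generic boundary point both lie in $\partial_s L_n$ (by $g$-invariance), and one constructs the invariant peak function by multiplying rather than adding, e.g. using $h(z)\cdot h(g(z))$ or a product-type construction, so that the peaking set on $L_n$ is exactly the fiber $\Lambda_n^{-1}(\Lambda_n(p))$. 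Care is needed at the critical locus $\mathcal{J}_{\Lambda_n} = \{0\} \times L_{n-1}$, where fibers degenerate to single points; there one uses that $g$ fixes this set pointwise (Proposition \ref{proposition:idempotent-automorphism}), so invariance is automatic and no symmetrization is lost.

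Finally I would assemble the two inclusions. The cleanest write-up invokes the known description of $\partial_s L_n$ for the Lie ball explicitly and computes $\Lambda_n(\partial_s L_n)$ directly in coordinates, cross-checking against the defining inequalities of $\mathbb{L}_n$ stated at the start of this section; this concrete computation serves as an independent verification of the abstract argument and locates exactly which boundary points of $\mathbb{L}_n$ constitute the Shilov boundary.
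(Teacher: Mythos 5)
Your first half is sound: since $\Lambda_n(\overline{L_n})=\overline{\mathbb{L}_n}$ and $|f\circ\Lambda_n|$ attains its maximum over $\overline{L_n}$ on $\partial_s L_n$ for every $f\in\mathcal{O}(\mathbb{L}_n)\cap\mathcal{C}(\overline{\mathbb{L}_n})$, the compact set $\Lambda_n(\partial_s L_n)$ is a closed boundary and therefore contains $\partial_s\mathbb{L}_n$ (you label this inclusion the wrong way around, but the argument proves the correct containment). The genuine gap is exactly at the step you yourself flag as the main obstacle, and your proposed resolution fails. If $p\in\partial_s L_n$ has $g(p)\neq p$ and $h$ peaks at $p$, the invariant product $h(z)\,h(g(z))$ does \emph{not} have the fiber $\Lambda_n^{-1}(\Lambda_n(p))$ as its maximum-modulus set: its modulus at $p$ (and at $g(p)$) equals $|h(g(p))|<1$, and its maximum can sit over a completely different fiber. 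Already in the one-dimensional model of the situation ($\mathbb{D}$, $\Lambda(z)=z^2$, $g(z)=-z$, $p=1$, $h(z)=(1+z)/2$) one gets $h(z)h(-z)=(1-z^2)/4$, which descends to $w\mapsto (1-w)/4$; this attains its maximum modulus at $w=-1$ and actually \emph{vanishes} at $w=\Lambda(p)=1$. So the minimality inclusion $\Lambda_n(\partial_s L_n)\subseteq\partial_s\mathbb{L}_n$ is not established; the concluding coordinate computation you propose cannot repair this, because computing the set $\Lambda_n(\partial_s L_n)$ is not the issue --- proving it is the \emph{minimal} closed boundary is.

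The step can be repaired by symmetrizing high powers additively instead of multiplying: put $s_N(w):=h(z)^N+h(g(z))^N$ for $\Lambda_n(z)=w$, which lies in $\mathcal{O}(\mathbb{L}_n)\cap\mathcal{C}(\overline{\mathbb{L}_n})$. If $g(p)\neq p$, then $h(g(p))^N\to 0$ as $N\to\infty$, while $\sup_{\overline{L_n}\setminus U}|h|^N\to 0$ for any $g$-invariant neighborhood $U$ of the fiber of $p$; hence for large $N$ the modulus of $s_N$ at $\Lambda_n(p)$ exceeds its supremum outside any prescribed neighborhood of $\Lambda_n(p)$ (the case $g(p)=p$ is immediate). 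By the standard localization characterization of the Shilov boundary this yields $\Lambda_n(p)\in\partial_s\mathbb{L}_n$. Note that the paper does none of this: its proof is a two-line application of Kosi\'nski's theorem \cite[Theorem 3]{Kos 2011} on Shilov boundaries under proper polynomial self-maps of $\mathbb{C}^n$, applied to $\Lambda_n$, which encapsulates precisely the minimality argument you are missing; your route, once corrected as above, amounts to reproving that theorem in this special case.
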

\begin{proof}
\textcolor{black}{Inferring Equation \eqref{rfff}, we obtain that $\Lambda_n : \mb C^n \to \mb C^n$ is a proper holomorphic mapping. Then if $L$ is a domain relatively compact in $\mb C^n$ and $K = \Lambda_n^{-1}(L),$ from \cite[Theorem 3]{Kos 2011} we get $\partial_s L = \Lambda_n (\partial_s K).$ We consider $K=\overline{L_n}$ and $ L=\Lambda_n(K)$ then the above description of $\partial_s \mathbb L_n$ is a direct consequence of \cite[Theorem 3]{Kos 2011}.}
\end{proof}
Since the Shilov boundary of any classical Cartan domain is well-known  \cite{Hua 1963}, the above description is complete.
\subsection{(Non-)convexity} We prove below that $\mathbb L_n$ cannot be exhausted by domains biholomorphic to convex ones. This extends the results of \cite{Cos 2004, Edi 2004, Edi-Kos-Zwo 2013} on the symmetrized bidisc and the tetrablock.

\textcolor{black}{From the definition of $\mathbb L_n$, we get $\mathbb L_2\times\{0\}^{n-2}=\{z\in\mathbb L_n:z_j=0,\;j=3,\ldots,n\}.$ In \cite[Lemma 3]{Coh-Col 1994}, it is shown that if $(z_1,\ldots,z_k) \in L_k,$ then for $m<k,$ $(z_1,\ldots,z_m) \in L_m.$ Thus the projection of $\mathbb L_n$ on $\mathbb C^2\times\{0\}^{n-2}$ is $\mathbb L_2\times\{0\}^{n-2}.$ It is crucial in the proof of the next result.}

\textcolor{black}{For a domain $D,$ we denote by $S(D)$ the set of all holomorphic mappings $F:D\times D \to D$ such that $F(z,z)=z$ and $F(z,w) = F(w,z)$ for $z,w \in D.$ Moreover, the property $S(D) = \varnothing$ is biholomorphically invariant. In \cite[Corollary 3]{Edi 2013}, Edigarian proved that for the symmetrized bidisc $\mb G_2,$ $S(\mb G_2) = \varnothing.$  Since $\mathbb L_2$ is biholomorphic to $\mathbb G_2,$ $S(\mathbb L_2\times\{0\}^{n-2}) = \varnothing.$ Now we directly apply  \cite[Proposition 2.1]{Zap 2015} to prove the following.}

\begin{theorem}\label{convex} The domain $\mathbb L_n$ cannot be exhausted by domains biholomorphic to convex ones, $n\geq 2$. 
\end{theorem}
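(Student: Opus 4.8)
The plan is to reduce the statement to an obstruction that is already known for the symmetrized bidisc and to import it through the projection structure laid out just above the theorem. First I would recall the two facts assembled in the paragraph preceding the statement: that $\mathbb L_2\times\{0\}^{n-2}=\{z\in\mathbb L_n:z_j=0,\;j=3,\ldots,n\}$ coincides with the image of $\mathbb L_n$ under the coordinate projection $P:\mathbb C^n\owns w\to(w_1,w_2,0,\ldots,0)$, and that $S(\mathbb L_2\times\{0\}^{n-2})=\varnothing$ because $\mathbb L_2$ is biholomorphic to $\mathbb G_2$ and Edigarian proved $S(\mathbb G_2)=\varnothing$ in \cite[Corollary 3]{Edi 2013}. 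The key structural observation is that $P$ restricts to a holomorphic retraction of $\mathbb L_n$ onto $\mathbb L_2\times\{0\}^{n-2}$: it maps $\mathbb L_n$ into itself (by the projection property just cited) and is the identity on the subdomain.

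Next I would invoke \cite[Proposition 2.1]{Zap 2015} as the engine of the argument. The relevant content of that result is that the property of being (exhaustible by domains biholomorphic to) convex passes to holomorphic retracts, equivalently that it forces $S$ to be nonempty on any retract. Concretely, if $\mathbb L_n$ could be exhausted by domains biholomorphic to convex ones, then since a convex domain $C$ admits the symmetric fixed-point-free map $(z,w)\mapsto\tfrac{1}{2}(z+w)\in S(C)$, the invariance of $S(\cdot)=\varnothing$ under biholomorphism together with a limiting/retraction argument would produce a nonempty $S$ on the retract $\mathbb L_2\times\{0\}^{n-2}$. I would set this up by transporting the midpoint maps on the exhausting convex pieces through the retraction $P$ and the biholomorphisms, taking a normal-family limit to obtain an element of $S(\mathbb L_2\times\{0\}^{n-2})$, contradicting $S(\mathbb L_2\times\{0\}^{n-2})=\varnothing$.

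The step I expect to be the crux is verifying that \cite[Proposition 2.1]{Zap 2015} applies verbatim here, i.e.\ that the hypotheses of Zapałowski's proposition are exactly that one has a holomorphic retract of a domain exhaustible by biholomorphic copies of convex domains and one wants to conclude $S\neq\varnothing$ on the retract. Checking that $P$ is genuinely a retraction onto $\mathbb L_2\times\{0\}^{n-2}$ (rather than merely a projection into a larger set) is the one place where the explicit defining inequalities of $\mathbb L_n$, and the Cohen--Colonna containment $L_k\ni(z_1,\ldots,z_k)\Rightarrow(z_1,\ldots,z_m)\in L_m$ for $m<k$, must be used; this is precisely the content singled out as \emph{crucial} in the paragraph above the theorem. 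Once the retraction is in place and the cited proposition is matched to it, the contradiction is immediate and the theorem follows for all $n\geq2$.
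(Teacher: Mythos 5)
Your proposal is correct and follows essentially the same route as the paper: project onto the first two coordinates to realize $\mathbb L_2\times\{0\}^{n-2}$ as a holomorphic retract of $\mathbb L_n$ (via the Cohen--Colonna containment), note that $S(\mathbb L_2\times\{0\}^{n-2})=\varnothing$ because $\mathbb L_2$ is biholomorphic to $\mathbb G_2$ and $S(\mathbb G_2)=\varnothing$ by Edigarian, and then invoke \cite[Proposition 2.1]{Zap 2015}. The extra sketch you give of the internal mechanism of Zapa\l owski's proposition (midpoint maps on the convex exhaustion plus a normal-family limit through the retraction) is not needed, since the paper applies that proposition as a black box, exactly as you do in your concluding step.
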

\begin{proof}  \textcolor{black}{From the above discussion, we get that $\{(z_1,z_2): (z_1,\ldots,z_n)\in \mathbb L_n\}=\mathbb L_2$ and $\mathbb L_2\times\{0\}^{n-2}$ is a holomorphic retract of $\mathbb L_n.$ Also, it is taut and bounded with $S(\mathbb L_2\times\{0\}^{n-2})= \varnothing.$ Hence, from \cite[Proposition 2.1]{Zap 2015} we conclude that $\mb L_n$ cannot be exhausted by domains biholomorphic to convex ones.}
\end{proof} 

 In our proof, we rely on \cite[Proposition 2.1]{Zap 2015} which is a continuation of the results (and methods) introduced in \cite{Cos 2004} (which are later developed in \cite{Edi 2004, Edi 2013}).

\begin{remark} Recall that  both $\mathbb G_2$ and $\mathbb E$ are $\mathbb C$-convex domains \cite{Nik-Pfl-Zwo 2008, Zwo 2013}. Since these domains are biholomorphic to $\mathbb L_2$ and $\mathbb L_3$ it seems natural to pose a question whether all the domains $\mathbb L_n$ are biholomorphic to $\mathbb C$-convex domains. Perhaps the first step in answering the problem would be to show $\mathbb C$-convexity of $\mathbb F$ which is biholomorphic to $\mathbb L_4.$
\end{remark}

\subsection{Zeroes of the Bergman kernel and inhomogeneity} Another point of interest in complex analysis is to conclude whether the Bergman kernel of a domain has any zero. A closed form of the Bergman kernel is specifically useful for it. We use the transformation formula in \cite{Bel 1982} or a direct version in \cite{Gho 2021} for the Bergman kernels under a proper holomorphic mapping to find the Bergman kernel for $\mathbb L_n.$ Recall that the Bergman kernel of $L_n$ is given by \cite{Ara 1995}\begin{eqnarray*}
    K_{L_n}(z,w) = (1+\sum_{j=1}^n z_j^2\sum_{j=1}^n \overbar{w_j}^2-2\sum_{j=1}^n z_j\overbar{w_j} )^{-n}
\end{eqnarray*} and the Bergman kernel of $\mathbb L_n$ is \begin{eqnarray}\label{formla}
    \nonumber&&K_{\mathbb L_n}(\Lambda_n(z),\Lambda_n(w))\\ \label{ker} &=& \frac{1}{4z_1\overbar{w_1}}(K_{L_n}(z,w) - K_{L_n}(\sigma \cdot z,w))  \\ &=&\nonumber \frac{1}{4z_1\overbar{w_1}}((1+\sum_{j=1}^n z_j^2\sum_{j=1}^n \overbar{w_j}^2-2\sum_{j=1}^n z_j\overbar{w_j} )^{-n} - (1+\sum_{j=1}^n z_j^2\sum_{j=1}^n \overbar{w_j}^2-2\sum_{j=2}^n z_j\overbar{w_j} + 2z_1\overbar{w_1} )^{-n}),\end{eqnarray} 
 where the action of the reflection $\sigma$ is defined by $\sigma \cdot (z_1,z_2,\ldots,z_n) = (-z_1,z_2,\ldots,z_n).$   
 Using binomial expansion, we get the following:    
\begin{eqnarray}\label{formula}
    K_{\mathbb L_n}(\Lambda_n(z),\Lambda_n(w)) = \begin{cases} & \frac{\binom{n}{1}X_n^{n-1} +\binom{n}{3}X_n^{n-3} A^2 + \ldots + \binom{n}{1}X_n A^{n-2}}{(X_n^2 - A^2)^n}, \,\, n \text{ even;} \\ & \frac{\binom{n}{1}X_n^{n-1} +\binom{n}{3}X_n^{n-3} A^2 + \ldots + A^{n-1}}{(X_n^2 - A^2)^n}, \,\, n \text{ odd}; \end{cases}
\end{eqnarray} where $X_n = 1+\sum_{j=1}^n z_j^2\sum_{j=1}^n \overbar{w_j}^2-2\sum_{j=2}^n z_j\overbar{w_j} $  and $A = 2z_1\overbar{w_1}.$ This formula gives a closed form for the Bergman kernel $K_{\mathbb L_n}$.


    If the Bergman kernel of a domain $D$ has no zero in $D\times D,$ we say $D$ is {\it Lu Qi-Keng domain}. It is known that $\mathbb L_2$ is a Lu Qi-Keng domain \cite{Edi-Zwo 2005}. However, the result is otherwise for $n\geq 3.$ 
\begin{proposition}\label{Lu Qi-Keng}
   The domain $\mathbb L_n$ is not a Lu Qi-Keng domain for $n\geq 3.$
\end{proposition}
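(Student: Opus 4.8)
The plan is to exploit the explicit closed form of the Bergman kernel established in Equation \eqref{formula} and exhibit a zero directly. By the transformation properties of the Bergman kernel under the proper map $\Lambda_n$, it suffices to find points $z,w\in L_n$ (with $z_1\overbar{w_1}\neq 0$) at which the numerator of $K_{\mathbb L_n}(\Lambda_n(z),\Lambda_n(w))$ vanishes while the denominator does not. Writing $X=X_n=1+\sum_{j=1}^n z_j^2\sum_{j=1}^n\overbar{w_j}^2-2\sum_{j=2}^n z_j\overbar{w_j}$ and $A=2z_1\overbar{w_1}$, the numerator is, up to the common factor, the polynomial
\begin{equation*}
P_n(X,A)=\frac{(X+A)^n-(X-A)^n}{2A},
\end{equation*}
which is exactly $\binom{n}{1}X^{n-1}+\binom{n}{3}X^{n-3}A^2+\cdots$. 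Thus the Bergman kernel vanishes precisely when $(X+A)^n=(X-A)^n$ with $X+A\neq X-A$, i.e. when the ratio $(X-A)/(X+A)$ is a nontrivial $n$-th root of unity.

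First I would reduce the problem to a one-parameter search. Since $\mathbb L_2$ is known to be Lu Qi-Keng while the claim concerns $n\geq 3$, the mechanism must be the appearance of genuinely complex roots of unity when $n\geq 3$: for $n=2$ the only solution of $(X+A)^2=(X-A)^2$ forces $A=0$ or $X=0$, whereas for $n\geq 3$ there is a primitive $n$-th root $\zeta\neq 1$ giving $(X-A)/(X+A)=\zeta$, equivalently $A/X=(1-\zeta)/(1+\zeta)$, a nonzero purely imaginary-type quantity. So the heart of the matter is to realize the required value of the ratio $A/X$ by an honest choice of points inside $L_n$. I would look for diagonal-type points, say $w=\overline{z}$ or $w$ a simple scalar multiple of a fixed vector, so that $X$ and $A$ become concrete functions of a few real parameters, and then solve $A/X=(1-\zeta)/(1+\zeta)$.

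The key steps, in order, are: (1) record that $K_{\mathbb L_n}$ vanishes iff $P_n(X,A)=0$ with $X^2\neq A^2$; (2) factor $P_n$ and identify the vanishing locus as $(X-A)/(X+A)\in\{\zeta:\zeta^n=1,\ \zeta\neq1\}$; (3) for $n\geq 3$ choose a target root $\zeta$ and compute the prescribed value $c:=(1-\zeta)/(1+\zeta)$ of $A/X$; (4) produce explicit $z,w\in L_n$ with $2z_1\overbar{w_1}=cX$ and verify membership using the characterization $z\in L_n\iff \|z\|<1$ and $2\|z\|^2<1+|z\bullet z|^2$ from Subsection \ref{car}; and (5) check $X^2\neq A^2$ and $z_1\overbar{w_1}\neq 0$ so the exhibited point is a genuine zero of the kernel, not a removable artifact of the formula. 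A clean way to run steps (3)--(4) is to take $w$ near $z$ on a real slice so that $X\to$ a positive real number and $A$ can be tuned through imaginary values by rotating $z_1$ relative to $w_1$.

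The hard part will be step (4): ensuring that the points forcing the root-of-unity condition actually lie in $L_n$, since the Lie ball is a fairly constrained domain and the ratio $A/X$ must hit a specific complex value while $\|z\|,\|w\|$ stay below the $L_n$ threshold. The natural safeguard is to work with small points near the origin, where $X\approx 1$ and $L_n$ looks like a neighborhood of $0$, so the membership conditions are automatically satisfied and one only has to match $A\approx c$ by selecting $z_1,w_1$ appropriately; the denominator $X^2-A^2\approx 1-c^2$ is then manifestly nonzero for the chosen root. I expect that this local-at-the-origin choice makes steps (4)--(5) routine, leaving the genuinely content-bearing input to be the factorization in step (2), which is where the distinction between $n=2$ and $n\geq 3$ enters.
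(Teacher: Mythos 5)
Your steps (1)--(3) coincide with the paper's own mechanism: by \eqref{formla}, $K_{\mathbb L_n}(\Lambda_n(z),\Lambda_n(w))=0$ exactly when $K_{L_n}(z,w)=K_{L_n}(\sigma\cdot z,w)$, i.e. $(X-A)^{-n}=(X+A)^{-n}$, which is your condition that $(X+A)/(X-A)$ be a nontrivial $n$-th root of unity; the paper realizes it as $\bigl((1+z_0)/(1-z_0)\bigr)^{2n}=\omega_{2n}^{2n}=1$ for a primitive $2n$-th root of unity $\omega_{2n}$. The genuine gap is in steps (4)--(5), precisely the part you label routine. Your safeguard --- small points near the origin, so that membership in $L_n$ is automatic, $X\approx 1$, and ``one only has to match $A\approx c$'' --- is impossible: $A=2z_1\overbar{w_1}$ is \emph{quadratically} small near the origin (if $\|z\|,\|w\|<\varepsilon$ then $|A|<2\varepsilon^2$), so $A/X\to 0$ and can never reach the fixed nonzero target $c=(1-\zeta)/(1+\zeta)=-i\tan(\pi k/n)$. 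Moreover the smallest attainable $|c|$ is $\tan(\pi/n)$, which is $\geq 1$ for $n=3,4$, so the witnessing points are necessarily far from the origin, and membership in $L_n$ --- the constraint you hoped to trivialize --- is exactly where the work lies. Even the ``$X\approx1$'' heuristic is quantitatively wrong for the actual solutions: on the slice $z=(z_1,0,\ldots,0)$, $w=(w_1,0,\ldots,0)$ one has $X=1+u^2$ with $u=z_1\overbar{w_1}$, so $A/X=c$ becomes $2u/(1+u^2)=c$, forcing $u=-i\tan(\pi k/2n)$ and hence $X=1-\tan^2(\pi k/2n)$ (for instance $X=2/3$ when $n=3$, $k=1$), not a value near $1$.

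The repair is exactly the paper's choice of points, and it is short: work on that axis slice, where membership is trivial because $(\lambda,0,\ldots,0)\in L_n$ if and only if $|\lambda|<1$ (by the characterization in Subsection \ref{car}, since $2|\lambda|^2<1+|\lambda|^4$ holds automatically). Put $z_0=i\tan(\pi/2n)=(\omega_{2n}-1)/(\omega_{2n}+1)$, pick $r\in(|z_0|,1)$, and take $\widehat z=(z_0/r,0,\ldots,0)$, $\widehat w=(r,0,\ldots,0)$; then $A=2z_0$, $X=1+z_0^2$, so $X\pm A=(1\pm z_0)^2$ and $\bigl((1+z_0)/(1-z_0)\bigr)^{2n}=\omega_{2n}^{2n}=1$, while $A\neq0$ and $X^2\neq A^2$, so the kernel vanishes at $(\Lambda_n(\widehat z),\Lambda_n(\widehat w))$. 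The hypothesis $n\geq3$ enters exactly here: $|z_0|=\tan(\pi/2n)<1$ if and only if $n>2$ (for $n=2$ the would-be witness is $z_0=i$, on the boundary, consistent with $\mathbb L_2$ being Lu Qi-Keng). So your factorization step (2) is the same content-bearing input as the paper's, but the point-selection step is where the dimension restriction and the geometry of $L_n$ actually enter, and as planned it would not go through.
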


\begin{proof}
Let $z_0 = \frac{\omega_{2n} - 1}{\omega_{2n} + 1}$ for $\omega_{2n}=cos(\pi/n)+isin(\pi/n).$ 
For $n\geq 3,$ $0<|z_0| <1.$ Choose $r \in (0,1)$ such that $z_0/r \in \mathbb D.$ Note that for $\widehat{\bl z}=(z_0/r,0,\ldots,0)$ and $\widehat{\bl w}=(r,0,\ldots,0),$ we get $X_n = 1+z_0^2 $ and $A = 2z_0$ and thus \begin{eqnarray*}
  K_{L_n}(\widehat{\bl z},\widehat{\bl w})&=&  \frac{1}{(1-z_0)^{2n}}=\frac{(1+\omega_{2n})^{2n}}{2^{2n}} ,\\
K_{L_n}(\sigma \cdot \widehat{\bl z},\widehat{\bl w})&=& \frac{1}{(1+z_0)^{2n}}=\frac{(1+\omega_{2n})^{2n}}{(2\omega_{2n})^{2n}}.  
\end{eqnarray*}

Since $\omega_{2n}^{2n}=1,$ we get from Equation \eqref{formla} \begin{eqnarray*}
        K_{\mathbb L_n}(\Lambda_n(\widehat{\bl z}),\Lambda_n(\widehat{\bl w}))=0,
    \end{eqnarray*}
 which proves the result.
\end{proof}

\begin{remark} Trybula showed $\mathbb L_3$ is not a Lu Qi-Keng domain \cite{Try 2013}. The above observation is a generalization of it.
\end{remark}

Note that Equation \eqref{formula} shows that
\begin{equation}
    K_{\mathbb L_n}(0,z)=n
\end{equation}
for every $z \in \mathbb L_n.$ Therefore, an immediate consequence of Proposition \ref{Lu Qi-Keng} is the following result.
\begin{corollary}
    \label{homogennot}
    The domain $\mathbb L_n$ is inhomogeneous for $n \geq 3.$
\end{corollary}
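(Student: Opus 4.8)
The plan is to deduce inhomogeneity of $\mathbb L_n$ for $n\geq 3$ directly from the zero of the Bergman kernel established in Proposition~\ref{Lu Qi-Keng}, using the biholomorphic transformation rule for Bergman kernels. The key structural observation, already recorded in the excerpt via Equation~\eqref{formula}, is that $K_{\mathbb L_n}(0,z)=n$ for \emph{every} $z\in\mathbb L_n$; that is, the kernel evaluated against the distinguished point $0$ never vanishes and is in fact constant. This constancy is what makes the argument work: it singles out $0$ as a point whose Bergman-geometric behaviour is incompatible with that of the points exhibited in Proposition~\ref{Lu Qi-Keng}.

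First I would suppose, toward a contradiction, that $\mathbb L_n$ is homogeneous, so that $\Aut(\mathbb L_n)$ acts transitively on $\mathbb L_n$. By Proposition~\ref{Lu Qi-Keng}, for $n\geq 3$ there exist points $p=\Lambda_n(\widehat{\bl z})$ and $q=\Lambda_n(\widehat{\bl w})$ in $\mathbb L_n$ with $K_{\mathbb L_n}(p,q)=0$. The main tool is the transformation law: if $\Psi\in\Aut(\mathbb L_n)$ then
\begin{equation*}
K_{\mathbb L_n}(\Psi(u),\Psi(v))=\overline{\det\Psi'(v)}^{-1}\,\det\Psi'(u)^{-1}\,K_{\mathbb L_n}(u,v),
\end{equation*}
which shows that the vanishing locus of $K_{\mathbb L_n}$ in $\mathbb L_n\times\mathbb L_n$ is invariant under the diagonal action of the automorphism group. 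The idea is that homogeneity would let me move one of the two points, say $p$, to the distinguished point $0$ by some $\Psi$, while the kernel-zero relation is preserved.

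The key steps in order are as follows. By transitivity pick $\Psi\in\Aut(\mathbb L_n)$ with $\Psi(p)=0$, and set $q':=\Psi(q)$. Since $\det\Psi'$ is nowhere zero on $\mathbb L_n$, the transformation law gives $K_{\mathbb L_n}(0,q')=\overline{\det\Psi'(q)}^{-1}\det\Psi'(p)^{-1}K_{\mathbb L_n}(p,q)=0$. But $K_{\mathbb L_n}(0,q')=n\neq 0$ by Equation~\eqref{formula}, a contradiction. Hence $\mathbb L_n$ cannot be homogeneous for $n\geq 3$.

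The step I expect to require the most care is justifying that the constant value $n$ in $K_{\mathbb L_n}(0,z)=n$ really holds for every $z\in\mathbb L_n$, including the points $q'$ produced by the automorphism; this is read off from the even/odd formula in Equation~\eqref{formula} by setting one argument to $0$, so that $A=2z_1\overline{w_1}=0$ and only the leading term $\binom{n}{1}X_n^{n-1}/X_n^{n}=n/X_n$ survives, with $X_n=1$ when one variable is the origin. Everything else is a formal consequence of the biholomorphic invariance of the Bergman kernel, so the whole corollary is essentially a one-line deduction once Proposition~\ref{Lu Qi-Keng} and the constancy $K_{\mathbb L_n}(0,\cdot)\equiv n$ are in hand. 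I would note that this is a standard mechanism: a homogeneous domain has a zero-free Bergman kernel after the usual reduction, so failure of the Lu Qi-Keng property immediately obstructs homogeneity.
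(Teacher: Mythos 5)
Your proof is correct and takes essentially the same route as the paper: the paper presents the corollary as an immediate consequence of the constancy $K_{\mathbb L_n}(0,z)=n$ (read off from Equation \eqref{formula}) together with Proposition \ref{Lu Qi-Keng}, which is exactly the transformation-law/transitivity argument you spell out. One small caveat: your closing aside that a homogeneous domain is automatically Lu Qi-Keng "after the usual reduction" is not a standard fact and is not needed---what your argument actually uses is the nonvanishing of $K_{\mathbb L_n}(0,\cdot)$ at the single point $0$, propagated to all pairs by transitivity via the transformation rule.
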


\subsubsection{Holomorphic functions on $(k_1,\ldots,k_n)$-quasi balanced domains (An alternative attitude to Lu Qi-Keng problem and inhomogeneity of $\mathbb L_n$)} 
\textcolor{black}{Consider non-negative integers $k_1,\ldots,k_n$ and $N$. A complex polynomial $P$ in $n$ variables is called {\it $(k_1,\ldots,k_n)$-homogeneous} of degree $N$ if $P(\lambda^{k_1} z_1,\lambda^{k_2} z_2,\ldots,\lambda^{k_n} z_n)=\lambda^N P(z_1,z_2,\ldots,z_n),$ $(z_1,\ldots,z_n)\in\mathbb C^n$, $\lambda\in\mathbb C$.} Any $(k_1,\ldots,k_n)$-homogeneous polynomial of degree $N$ can be (uniquely) presented as the sum of monomials
\begin{equation*}
    P(z)=\sum_{k_1\alpha_1+\ldots k_n\alpha_n=N}c_{\alpha}z^{\alpha},
\end{equation*}
where the sum is taken over all $\alpha\in\mathbb N^n$ such that $k_1\alpha_1+\ldots+k_n\alpha_n=N$, $c_{\alpha}\in\mathbb C$ and $z^{\alpha}:=z_1^{\alpha_1}\cdot\ldots\cdot z_n^{\alpha_n}$.

Following the standard reasoning as in balanced domains, we obtain the expansion of holomorphic functions in $(k_1,\ldots,k_n)$-quasi balanced domains (compare e. g. \cite{Jak-Jar 2001}). 
\begin{theorem}
    Let $F$ be a holomorphic function defined on the $(k_1,\ldots,k_n)$-balanced pseudoconvex domain. Then 
    \begin{equation*}
    F(z)=\sum_{\nu=0}^{\infty}Q_{\nu}(z),
    \end{equation*}
    where $Q_{\nu}$ is a $(k_1,\ldots,k_n)$-homogeneous polynomial of degree $\nu$ and the convergence is locally uniform in $D$. 
    
    Additionally, if $F$ is $L^2$-integrable then $Q_{\nu}$ is from $L^2(D)$ the convergence is in $L^2$, polynomials $Q_{\nu}$ are orthogonal. 
    
    Moreover, there exists a complete orthonormal system of $(k_1,\ldots,k_n)$-homogeneous polynomials $\{P_j\}_j$ of the Bergman space  $L_h^2(D)$. Consequently, the Bergman kernel $K_D$ is given by the formula
    \begin{equation*}
        K_D(0,z)=\frac{1}{Vol(D)},\; z\in D.
    \end{equation*}
\end{theorem}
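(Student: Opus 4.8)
The plan is to mimic the classical Hartogs-type expansion for balanced domains, adapting it to the $(k_1,\ldots,k_n)$-balanced setting. First I would fix a holomorphic $F$ on the $(k_1,\ldots,k_n)$-balanced pseudoconvex domain $D$ and consider, for each fixed $z \in D$, the one-variable function $\lambda \mapsto F(\lambda^{k_1}z_1,\ldots,\lambda^{k_n}z_n)$. Because $D$ is $(k_1,\ldots,k_n)$-balanced, this map is defined and holomorphic on a disc in the $\lambda$-plane containing $\overline{\mathbb D}$ (indeed on an open neighborhood, by the openness of $D$ and compactness), so it admits a convergent power series $\sum_{\nu=0}^\infty a_\nu(z)\lambda^\nu$. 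Collecting the coefficient of $\lambda^\nu$ via the Cauchy integral $a_\nu(z)=\frac{1}{2\pi i}\int_{|\lambda|=1}\frac{F(\lambda^{k_1}z_1,\ldots,\lambda^{k_n}z_n)}{\lambda^{\nu+1}}\,d\lambda$ exhibits $a_\nu$ as a holomorphic function of $z$, and the substitution $z\mapsto(\mu^{k_1}z_1,\ldots,\mu^{k_n}z_n)$ shows directly that $Q_\nu:=a_\nu$ is $(k_1,\ldots,k_n)$-homogeneous of degree $\nu$; a one-variable homogeneous holomorphic function on such a domain is forced to be a polynomial of the stated weighted-homogeneous type. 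Setting $\lambda=1$ gives $F(z)=\sum_\nu Q_\nu(z)$, and the locally uniform convergence follows from the standard Cauchy estimates on the $\lambda$-discs, uniformly over compact subsets of $D$.

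Next I would treat the $L^2$ claims. The orthogonality of the $Q_\nu$ in $L^2(D)$ is the crucial structural point and follows from the invariance of $D$ (and hence of Lebesgue measure) under the rotations $T_\theta(z):=(e^{ik_1\theta}z_1,\ldots,e^{ik_n\theta}z_n)$ for real $\theta$: since $Q_\nu\circ T_\theta=e^{i\nu\theta}Q_\nu$, averaging the inner product $\langle Q_\nu,Q_\mu\rangle$ over $\theta\in[0,2\pi]$ and using the change of variables $z\mapsto T_\theta(z)$ produces the factor $\frac{1}{2\pi}\int_0^{2\pi}e^{i(\nu-\mu)\theta}\,d\theta=\delta_{\nu\mu}$, forcing orthogonality for $\nu\neq\mu$. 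The same averaging argument, applied to a single $L^2$ function and its expansion, shows that when $F\in L^2(D)$ each $Q_\nu\in L^2(D)$ and the series converges in $L^2$; the Parseval/Bessel machinery then gives that orthonormalizing within each homogeneity level (which keeps homogeneity) yields a complete orthonormal system $\{P_j\}$ of weighted-homogeneous polynomials for the Bergman space $L_h^2(D)$.

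Finally I would extract the Bergman kernel identity. Writing $K_D(0,z)=\sum_j \overline{P_j(0)}\,P_j(z)$, I observe that $P_j(0)=0$ for every $P_j$ of positive degree, since a weighted-homogeneous polynomial of degree $\nu\geq 1$ vanishes at the origin; hence only the degree-$0$ member survives. The unique normalized degree-$0$ element is the constant $P_0\equiv \mathrm{Vol}(D)^{-1/2}$, so $K_D(0,z)=\overline{P_0(0)}\,P_0(z)=\mathrm{Vol}(D)^{-1}$ for all $z\in D$, as claimed. The main obstacle I anticipate is not any single step but the careful justification that the finitely many monomials at each homogeneity level are $L^2$-integrable and that the rotation-averaging interchange of integration and summation is legitimate; this rests on the local uniform convergence established in the first paragraph together with the boundedness of $D$ implicit in the Bergman-space context, so I would make those measure-theoretic interchanges precise via dominated convergence on compact exhaustions of $D$.
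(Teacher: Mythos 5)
Your proposal is correct, and it is exactly the ``standard reasoning as in balanced domains'' that the paper itself invokes without writing out (it only cites Jak\'obczak--Jarnicki): the Hartogs-type expansion via Cauchy integrals in the auxiliary variable $\lambda$, weighted-homogeneity of the coefficients forcing them to be polynomials, orthogonality from averaging over the rotations $(e^{ik_1\theta}z_1,\ldots,e^{ik_n\theta}z_n)$, and the collapse of $K_D(0,z)$ to the constant term. Your explicit flagging of the finite-volume/boundedness assumption needed for the Bergman-kernel formula is a point the paper likewise leaves implicit, so there is no substantive divergence.
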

The above theorem shows that if $D$ is a bounded $(k_1,\ldots,k_n)$-quasi balanced transitive domain then it is Lu Qi-Keng domain, which gives another proof of Corollary~\ref{homogennot}.

\section{Proper holomorphic self-maps of $\mathbb L_n$ and the group $\Aut(\mathbb L_n)$} Two basic differences between $L_2$ and $L_n$, $n\geq 3$ are that the first domain is reducible and the existence of proper holomorphic self-maps which are not automorphisms. It turns out that the (non)-rigidity property of proper holomorphic self-maps is passed on to the domains $\mathbb L_n$ accordingly. Recall that the structure of proper holomorphic self-mappings is known for $\mathbb L_2$ and those are determined by the proper holomorphic self-mappings on the locus set (that is the unit disc) \cite{Edi-Zwo 2005, Edi 2004b}. For the tetrablock or $\mathbb L_3,$ all the proper holomorphic self-mappings are automorphisms \cite{Kos 2011} and the group of automorphisms of the tetrablock is determined in \cite{You 2008}. Moreover, the automorphisms are determined by automorphisms on the locus set (biholomorphic to $\mathbb D^2$ or $L_2$). We observe that the same phenomenon holds for $\mathbb L_n$ for all $n\geq 3$.

We note that the rigidity property of $\mathbb L_n$, $n\geq 3$ in the next proposition which is a direct consequence of \cite{Mok-Ng-Tu 2010}.

\begin{proposition}\label{alexander}
    Suppose that $n\geq 3.$ Every proper holomorphic map $\phi : \mathbb L_n \to \mathbb L_n$ is an automorphism. 
\end{proposition}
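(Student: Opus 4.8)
The plan is to leverage the factorization structure of $\mathbb L_n$ through the proper holomorphic map $\Lambda_n : L_n \to \mathbb L_n$ and reduce the rigidity question to a known rigidity statement on the Lie ball $L_n$. Given any proper holomorphic self-map $\phi : \mathbb L_n \to \mathbb L_n$, I would first form the composition $\phi \circ \Lambda_n : L_n \to \mathbb L_n$, which is again proper (a composition of proper maps is proper). The key idea is to \emph{lift} this map through the covering $\Lambda_n$ to obtain a proper holomorphic self-map $\widetilde{\phi} : L_n \to L_n$ satisfying $\Lambda_n \circ \widetilde{\phi} = \phi \circ \Lambda_n$. The existence of such a lift is where the Galois property from Corollary~\ref{galois} enters: since $\Lambda_n : L_n \to \mathbb L_n$ is a Galois $2$-proper covering with deck group $\{\id, \sigma\}$, the lifting is governed by whether $(\phi \circ \Lambda_n)_*$ respects the subgroup structure of the fundamental group (or, more concretely in this holomorphic setting, whether the fibers can be matched consistently off the branch locus and then extended across it by the Riemann removability theorem, exactly as in Proposition~\ref{proposition:idempotent-automorphism}).

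Once the lift $\widetilde{\phi} : L_n \to L_n$ is constructed, the heart of the argument is to invoke the rigidity of proper holomorphic self-maps of irreducible bounded symmetric domains of rank $\geq 2$. For $n \geq 3$, the Lie ball $L_n$ (the classical Cartan domain of type IV) is an irreducible bounded symmetric domain, and by the result of Mok--Ng--Tu \cite{Mok-Ng-Tu 2010} — cited in the statement as the source — every proper holomorphic self-map of such a domain is in fact an automorphism. Hence $\widetilde{\phi} \in \Aut(L_n)$. It then remains to descend this conclusion back to $\mathbb L_n$: since $\widetilde{\phi}$ is a biholomorphism of $L_n$ intertwining with $\phi$ via $\Lambda_n$, and since $\Lambda_n$ identifies $\mathbb L_n$ with $L_n / \Deck(\Lambda_n)$, I would show that $\widetilde{\phi}$ normalizes the deck group $\{\id,\sigma\}$ (i.e. $\widetilde{\phi} \circ \sigma \circ \widetilde{\phi}^{-1}$ is again a deck automorphism), which forces $\phi$ to be a well-defined biholomorphism of the quotient, hence an automorphism of $\mathbb L_n$.

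The main obstacle I anticipate lies in the \emph{lifting step}: constructing $\widetilde{\phi}$ rigorously and verifying it is proper and globally defined across the branch locus $\mathcal J_{\Lambda_n} = \{z \in L_n : z_1 = 0\}$. The difficulty is that the naive fiberwise definition of the lift is only guaranteed on the complement of the critical values; one must check that the two possible local lifts glue into a single-valued holomorphic map and then extend holomorphically across the branch divisor via Riemann removability, as well as confirm that $\widetilde{\phi}$ does not collapse dimension (properness). A clean way to handle the normalization of the deck group is to note that $\sigma$ is characterized intrinsically as the unique nontrivial deck automorphism whose fixed-point set is the hyperplane $\{z_1 = 0\}$; since a biholomorphism $\widetilde{\phi}$ must send this distinguished totally geodesic fixed-point set to another such set, and since $\Aut(L_n)$ acts in a controlled way on such reflections (by Gottschling's classification \cite{Got 1969}, the reflection is unique up to conjugation in $\Aut(L_n)$), the conjugate $\widetilde{\phi}\,\sigma\,\widetilde{\phi}^{-1}$ must coincide with $\sigma$ up to the covering, yielding the required descent.
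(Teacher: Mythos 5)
Your opening move coincides with the paper's: form $\phi\circ\Lambda_n\colon L_n\to\mathbb L_n$ and invoke the rank~$\geq 2$ theorem of Mok--Ng--Tu. But the route you then take has a genuine gap at its central step: the existence of a lift $\widetilde\phi\colon L_n\to L_n$ with $\Lambda_n\circ\widetilde\phi=\phi\circ\Lambda_n$. A lift of a proper map through a branched double cover need not exist, and nothing in your sketch establishes it here. Already in dimension one this fails: for the cover $\mathbb D\ni z\mapsto z^2\in\mathbb D$ and a M\"obius automorphism $\phi$ of $\mathbb D$ with $\phi(0)\neq 0$, a lift would be a holomorphic square root of $\phi(z^2)$, which does not exist since $\phi(z^2)$ has two simple zeros. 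The covering-space criterion you allude to is exactly the obstruction, and unwinding it is not routine: writing $B=\{w\in\mathbb L_n\colon w_1=0\}$ for the branch locus, the criterion amounts to requiring that $\phi_1$ vanish to \emph{even} order along every irreducible component of $\phi^{-1}(B)$ other than $B$ itself. That is an a priori structural constraint on $\phi$ of precisely the kind the proposition is trying to establish, and it does not follow from Proposition~\ref{proposition:idempotent-automorphism}: that proposition constructs the deck involution of a given proper map, whose fibers are permuted by definition --- a much weaker task than lifting a \emph{different} proper map, $\phi\circ\Lambda_n$, through $\Lambda_n$. Note also that for $n=2$ every proper self-map of $\mathbb L_2\cong\mathbb G_2$ does lift to $\mathbb D^2$, but this is a theorem of Edigarian \cite{Edi 2004b} proved by other means; lift existence is an output of the structure theory, not an input to it.

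The paper avoids lifting altogether. It applies \cite[p.~815, Theorem 1]{Mok-Ng-Tu 2010} directly to the composition $\phi\circ\Lambda_n\colon L_n\to\mathbb L_n$, concluding that this proper map has multiplicity $2$; since multiplicities are multiplicative under composition, $2$ equals twice the multiplicity of $\phi$, so $\phi$ has multiplicity $1$ and is therefore an automorphism. In other words, the rank~$\geq 2$ input is used as a statement about arbitrary proper maps \emph{defined on} $L_n$, not about self-maps of $L_n$, and this is what renders the lift unnecessary. For what it is worth, if you could construct $\widetilde\phi$, the rest of your argument would close, and could even be shortened: once $\widetilde\phi\in\Aut(L_n)$, the same multiplicity count finishes the proof with no need for the normalization-of-the-deck-group discussion (and the commutation $\widetilde\phi\circ\sigma=\sigma\circ\widetilde\phi$ follows from the identity principle applied to $\Lambda_n\circ\widetilde\phi\circ\sigma=\Lambda_n\circ\widetilde\phi$, rather than from Gottschling's conjugacy classification \cite{Got 1969}). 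But as it stands, the lifting step is the heart of your argument and is unproved, so the proposal does not yet constitute a proof.
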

\begin{proof}
First we observe that $\phi \circ \Lambda_n : L_n \to \mathbb L_n$ is a proper holomorphic mapping. By \cite[p. 815, Theorem 1]{Mok-Ng-Tu 2010}, the multiplicity of $\phi \circ \Lambda_n$ is $2.$ Since the multiplicity of $\Lambda_n$ is also $2,$ it is evident that $\phi$ is an automorphism.
\end{proof}

In subsequent subsections, we determine the group of automorphisms of $\mathbb L_n$. Initially, we construct automorphisms of $\mathbb L_n$ extending from the locus set $\{0\}\times L_{n-1}$ and subsequently, prove that there are no other automorphisms. The latter follows from the inhomogeneity of $\mathbb L_n$. 

\subsection{Extension of automorphisms from the locus set to $\mathbb L_n$}
 We start with observing the construction of automorphisms of $\mathbb G_2$ and $\mathbb E$ more intrinsically. (All) the automorphisms of these two domains are induced by automorphisms of some lower dimensional sets. 
 Those lower dimensional sets are $\{(2\lambda,\lambda^2):\lambda\in\mathbb D\}$ (biholomorphic to $\mathbb D$) for $\mathbb G_2$ and the set $\{(a,b,ab):a,b\in\mathbb D^2\}$ (biholomorphic to $\mathbb D^2$) for $\mathbb E$ which are critical values of the proper holomorphic maps $\pi$ defined in Equation \eqref{rff} and $\phi_3$ defined in Equation \eqref{rf}, respectively. For $\mathbb L_n,$ we focus on the set of critical values of $\Lambda_n,$ that is, $\mathcal J_{\Lambda_n}:=\{0\}\times L_{n-1}$. We use it to produce automorphisms of $\mathbb L_n$ extending the automorphisms of $L_{n-1}$. Before doing that, we make one general observation. 

\begin{remark}\label{remark:extension} We consider a $2$-proper holomorphic mapping $\pi:D\to G$ between domains $D,G\subset\mathbb C^n$ with $\mathcal J_\pi:=\{z\in D:\operatorname{det}\pi^{\prime}(z)=0\}$. Assume that 
\begin{enumerate}
    \item[1.] $a$ is an automorphism of $D$ such that $a|_{\mathcal J_\pi}$ is an automorphism of $\mathcal J_\pi$ and
    \item[2.] $a$ preserves the fibers $\pi^{-1}(w)$, $w\in G,$
\end{enumerate}  then $a$ induces an automorphism on $G$ extending from $\pi(\mathcal J_\pi).$ We apply this observation to prove the following result. 
\end{remark} 

\begin{theorem}\label{theorem:extension} Any automorphism of the locus set $\mathcal J_{\Lambda_{n+1}}=\{0\}\times L_{n}$ extends to an automorphism of $\mathbb L_{n+1}$, $n\geq 1$.
 \end{theorem}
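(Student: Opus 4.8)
The plan is to verify the hypotheses of Remark~\ref{remark:extension} for the $2$-proper map $\Lambda_{n+1}:L_{n+1}\to\mathbb L_{n+1}$, whose deck involution is the linear reflection $\sigma(z)=(-z_1,z_2,\ldots,z_{n+1})$ and whose locus set is $\mathcal J_{\Lambda_{n+1}}=\{0\}\times L_n=\Fix(\sigma)$. The starting observation is that an $a\in\Aut(L_{n+1})$ preserves every fiber $\Lambda_{n+1}^{-1}(w)=\{z,\sigma(z)\}$ exactly when $a\circ\sigma=\sigma\circ a$, and that any such $a$ then satisfies $a(\Fix(\sigma))=\Fix(\sigma)$; hence commuting with $\sigma$ secures both conditions of the Remark at once. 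Since $\Lambda_{n+1}$ is the identity on $\{0\}\times L_n$, the automorphism of $\mathbb L_{n+1}$ induced by such an $a$ restricts on $\{0\}\times L_n$ to $a|_{\{0\}\times L_n}$. Thus the whole problem reduces to showing that every $\psi\in\Aut(\{0\}\times L_n)\cong\Aut(L_n)$ extends to some $a\in\Aut(L_{n+1})$ commuting with $\sigma$.

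Set $c:=\psi(0)\in L_n$. First I would produce an automorphism of $L_{n+1}$ commuting with $\sigma$ that carries $0$ to $(0,c)$. Because $\{0\}\times L_n=\Fix(\sigma)$ is the fixed-point set of a holomorphic isometry it is totally geodesic, so the (unique, by nonpositive curvature) geodesic from $0$ to $(0,c)$ lies in it and its midpoint $m$ satisfies $\sigma(m)=m$. Writing $s_p$ for the geodesic symmetry of $L_{n+1}$ at $p$, put $\tau:=s_m\circ s_0$; this is a transvection with $\tau(0)=(0,c)$. Since $\sigma$ fixes both $0$ and $m$, the conjugation law $g\,s_p\,g^{-1}=s_{g(p)}$ gives $\sigma s_0\sigma^{-1}=s_0$ and $\sigma s_m\sigma^{-1}=s_m$, so $\sigma\tau\sigma^{-1}=\tau$. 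Consequently $\tau$ commutes with $\sigma$, preserves $\{0\}\times L_n$, and restricts there to an automorphism of $L_n$ sending $0$ to $c$.

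Now $k:=(\tau|_{\{0\}\times L_n})^{-1}\circ\psi$ fixes $0$, so by Cartan's theorem and the description of $\Aut(L_n)$ in \cite{Hua 1963} it is linear of the form $k(z')=e^{i\theta}Bz'$ with $B\in O(n)$. I extend it by $\widehat k(z):=e^{i\theta}\,\widetilde B z$, where $\widetilde B:=\operatorname{diag}(1,B)\in O(n+1)$. As $e^{i\theta}O(n+1)$ preserves both $\|z\|$ and $|z\bullet z|$ it preserves $L_{n+1}$, and because $\widetilde B$ acts as a scalar on the first coordinate, $\widehat k$ commutes with $\sigma$ and restricts to $k$ on $\{0\}\times L_n$. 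Then $a:=\tau\circ\widehat k$ lies in the centralizer of $\sigma$ and satisfies $a|_{\{0\}\times L_n}=\tau|_{\{0\}\times L_n}\circ k=\psi$; applying Remark~\ref{remark:extension} yields an automorphism of $\mathbb L_{n+1}$ whose restriction to $\{0\}\times L_n$ is $\psi$, completing the proof.

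The delicate point is the transvection step: one must recognise $\Fix(\sigma)$ as a totally geodesic subdomain (so that the connecting geodesic and its midpoint remain inside it and are fixed by $\sigma$) and then invoke the symmetric-space identity $g\,s_p\,g^{-1}=s_{g(p)}$ to force $\tau$ into the centralizer of $\sigma$; the fiber/centralizer equivalence and the extension of the isotropy factor through $O(n)\hookrightarrow O(n+1)$ are routine. If one prefers to avoid the symmetric-space machinery, an alternative is to extend each generator of $\Aut(L_n)$ directly via the explicit transvection formulas for $\Aut(L_{n+1})$ recorded in \cite{Hua 1963} and check commutation with $\sigma$ by hand, but the intrinsic geodesic argument seems cleaner.
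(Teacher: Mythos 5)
Your proof is correct, but it takes a genuinely different route from the paper's. Both arguments funnel through Remark~\ref{remark:extension}, and your opening observation---that preserving the fibers of $\Lambda_{n+1}$ is equivalent to commuting with the deck reflection $\sigma$, and that commuting with $\sigma$ automatically preserves $\Fix(\sigma)$---is exactly the equivariance condition the paper verifies, just stated abstractly rather than checked on formulas. The difference is in how the extension is produced. The paper decomposes $\Aut(L_n)$ into its standard generators (linear automorphisms $\omega U$ and involutions interchanging a chosen point with the origin) and extends each generator by padding the defining matrices $A,B,C,D$ of \cite{deF 1991}, \cite{Hua 1963} with an extra row and column, verifying the displayed equivariance identity by direct computation. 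You instead work intrinsically: you realize the ``transitive part'' of the extension as a transvection $\tau=s_m\circ s_0$ along the geodesic joining $0$ to $(0,c)$ inside the totally geodesic set $\Fix(\sigma)$, force $\tau$ into the centralizer of $\sigma$ via the conjugation law $g\,s_p\,g^{-1}=s_{g(p)}$, and then reduce what remains to the linear isotropy group by Cartan's theorem, padding it exactly as the paper pads its linear generators. What the paper's method buys is explicitness: concrete closed-form extensions, with no Riemannian input beyond the cited matrix formulas. What your method buys is economy and generality: it never touches the de Fabritiis/Hua formulas for the nonlinear automorphisms, and it would work verbatim for any bounded symmetric domain equipped with a reflection whose fixed-point set is a hyperplane section, at the cost of invoking symmetric-space machinery (uniqueness of geodesics in Hadamard manifolds, total geodesy of fixed-point sets of isometries, holomorphy of the symmetries $s_p$) together with the description of the linear isotropy group of the Lie ball from \cite{Hua 1963}. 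One point worth making explicit in your write-up: $\Fix(\sigma)$ is closed and connected, hence complete, so the geodesic from $0$ to $(0,c)$ inside it exists; alternatively, uniqueness of the ambient geodesic plus $\sigma$-invariance of its endpoints already forces $\sigma$ to fix that geodesic pointwise, which yields $\sigma(m)=m$ without appealing to total geodesy at all.
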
 \begin{proof} Recall that the automorphisms of all classical Cartan domains are generated by the linear isomorphisms of the domain and the \textcolor{black}{involutions} which intertwine some point of the domain and the origin. It is required to verify the conditions described in Remark~\ref{remark:extension} for these two classes of automorphisms of $\{0\}\times L_{n}\subset L_{n+1}$.

The linear automorphisms of $L_{n}$ are of the form 
\begin{equation*}
L:\mathbb C^n\owns z\to\omega ~U(z)\in\mathbb C^n,
\end{equation*}
where $|\omega|=1$ and $U$ is a special orthogonal matrix. It induces an isomorphism of $L_{n+1}$ by the formula
\begin{equation*}
\mathbb C^{n+1}\owns (z_1,z)\to \omega \big(z_1,U(z)\big)\in\mathbb C^{n+1},
\end{equation*}
which satisfies the properties in Remark~\ref{remark:extension}.

Now we investigate the same for an \textcolor{black}{involutive} automorphism of $L_n$ which maps some point of $L_n$ to the origin. 
Let $g = \begin{pmatrix}
    A&B\\C&D
\end{pmatrix},$ where $A,B,C$ and $D$ are associated matrices in the definition of the automorphism $\Psi_g$ of $L_n$ in \cite[p. 162]{deF 1991}, see also \cite[p. 86]{Hua 1963}. We define the corresponding matrices $\tilde A,\tilde B,\tilde C,\tilde D$ defining the automorphism $\Psi_{\tilde g}$ of $L_{n+1}$ with $\tilde z:=(z_1,z^{\prime})=(z_1,z_2,\ldots,z_{n+1})$ as following: 
\begin{equation*}
\tilde A:=
\begin{bmatrix} 
1 & 0_n \\
0^n & A
\end{bmatrix},
\,\tilde B:=
\begin{bmatrix}
0_2 \\
B
\end{bmatrix},\,
\tilde C:=
\begin{bmatrix}
0^2 & C
\end{bmatrix},\,
\tilde D:=D,
\end{equation*}
where $0_n$ denotes the $1\times n$ row vector with all $0$ entries and $0^n$ denotes the $n\times 1$ column vector with all $0$ entries.
The matrices satisfy the conditions in \cite[p. 162]{deF 1991} and the corresponding automorphism $\Psi_{\tilde g}$ of $L_{n+1}$ satisfies 
\begin{equation*}
\Psi_{\tilde g}(-z_1,z^{\prime})=(-(\Psi_{\tilde g}(z_1,z^{\prime}))_1,(\Psi_{\tilde g}(z_1,z^{\prime}))_{(2,\ldots,n+1)}).
\end{equation*}
Additionally, it extends the automorphism $(0,\Psi_g)$ of \textcolor{black}{$\Lambda_{n+1}(\{0\}\times L_n)=\{0\}\times L_n$}. Consequently, Remark~\ref{remark:extension} lets us to define an automorphism of $\mathbb L_{n+1}$.
\end{proof}

\begin{remark}\label{remark:cartan}  In order to show that each automorphism of $\mathbb L_n$ leaving the set \textcolor{black}{$\Lambda_{n}(\{0\}\times L_{n-1})=\{0\}\times L_{n-1}$} invariant is uniquely determined by the automorphism of $L_{n-1},$ it is sufficient to prove the next lemma. That lemma is a kind of Cartan theorem. The main ingredients of the proof are the above construction of the automorphisms of $\mathbb L_n$ from a lower dimensional submanifold and the transitivity of the automorphisms of Lie balls. We follow the ideas from the proof of \cite[Lemma 4.2]{You 2008} where the method is adapted from \cite{Jar-Pfl 2004}.
\end{remark}

\begin{lemma}\label{lemma:uniqueness}  For $n\geq 2,$ if $F\in\Aut(\mathbb L_n)$ and $F(0,z^{\prime})=(0,z^{\prime})$ for $z^{\prime}\in L_{n-1}$,  then $F$ is the identity.
    \end{lemma}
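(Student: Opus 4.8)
The plan is to prove a Cartan-type uniqueness statement: an automorphism $F$ of $\mathbb L_n$ that fixes the locus set $\{0\}\times L_{n-1}$ pointwise must be the identity. The natural strategy is to lift $F$ to the covering Lie ball $L_n$ via the $2$-proper map $\Lambda_n$, and there invoke the classical rigidity of holomorphic self-maps of bounded symmetric domains.

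First I would lift $F$ to the Lie ball. Since $\Lambda_n:L_n\to\mathbb L_n$ is a $2$-proper Galois covering with $\Deck(\Lambda_n)=\{\id,\sigma\}$ where $\sigma\cdot(z_1,z')=(-z_1,z')$ (Corollary~\ref{galois}), and $F$ fixes the critical values $\Lambda_n(\{0\}\times L_{n-1})$, the composition $F\circ\Lambda_n:L_n\to\mathbb L_n$ is again a $2$-proper holomorphic mapping whose critical value set coincides with that of $\Lambda_n$. By the covering property there is a biholomorphic lift $\widetilde F\in\Aut(L_n)$ with $\Lambda_n\circ\widetilde F=F\circ\Lambda_n$; the lift is unique up to composition with $\sigma$, and I fix the one that restricts to the identity on the fixed locus $\{0\}\times L_{n-1}=\Fix(\sigma)$.

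Next I would pin down $\widetilde F$ using Cartan's theorem. The locus $\{0\}\times L_{n-1}$ is the fixed-point set $\Fix(\sigma)$, a complex submanifold of codimension one through which I can arrange $\widetilde F$ to fix $0\in L_n$ (after composing with the transitive automorphism group of $L_n$ if needed, following Remark~\ref{remark:cartan}). Because $\widetilde F$ is the identity on the hyperplane $\{0\}\times L_{n-1}$, its differential $d\widetilde F_0$ is the identity on the corresponding hyperplane $\{0\}\times\mathbb C^{n-1}$; it remains to control the action on the transverse direction $\partial/\partial z_1$. The compatibility relation $\Lambda_n\circ\widetilde F=F\circ\Lambda_n$ together with $F$ fixing the critical values forces $\widetilde F$ to either preserve or flip the first coordinate, so $d\widetilde F_0=\id$ or $d\widetilde F_0=\sigma$; in the latter case replacing $\widetilde F$ by $\sigma\circ\widetilde F$ yields $d\widetilde F_0=\id$. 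Cartan's uniqueness theorem then gives $\widetilde F=\id_{L_n}$ (or $\widetilde F=\sigma$), whence $F=\id_{\mathbb L_n}$.

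The main obstacle I expect is the descent step: ensuring that the lift $\widetilde F$ can be normalized to fix a point and have identity differential \emph{consistently with} being a deck-compatible lift, rather than merely as an abstract automorphism of $L_n$. Concretely, one must verify that the two possible lifts differ exactly by $\sigma$ and that the one fixing $\{0\}\times L_{n-1}$ pointwise is honestly a biholomorphism of $L_n$ (not just of $L_n\setminus\mathcal J_{\Lambda_n}$), which uses the Riemann removability argument already employed in Proposition~\ref{proposition:idempotent-automorphism}. Once the lift is established as a genuine element of $\Aut(L_n)$ agreeing with $\id$ on a codimension-one submanifold and having trivial differential at a fixed point, the conclusion is immediate from Cartan's theorem; the delicate bookkeeping lies entirely in transferring the hypothesis on $F$ through $\Lambda_n$ to a normalization of $\widetilde F$ to which Cartan applies.
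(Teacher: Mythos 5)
Your route (lift $F$ through $\Lambda_n$ to the Lie ball and apply Cartan there) is genuinely different from the paper's, which never lifts anything: the paper works directly on $\mathbb L_n$, first killing the entries $a_2,\ldots,a_n$ of the first row of $F'(0)$ by conjugating $F$ with the quasi-rotations $\rho_\omega(z)=(\omega^2z_1,\omega z_2,\ldots,\omega z_n)$ and using boundedness of the powers $(H_\omega'(0))^N$, then proving $a_1=1$ via a quasi-homogeneous dilation and the Minkowski functional, and finally invoking Cartan's uniqueness theorem on $\mathbb L_n$ itself. Your endgame, once a lift exists, is actually simpler than you anticipate: the normalizations you list as the ``main obstacle'' are automatic, since $\Lambda_n^{-1}(0)=\{(0,\ldots,0)\}$ forces $\widetilde F(0)=0$, and $\Lambda_n^{-1}(0,z')=\{(0,z')\}$ forces \emph{every} lift to restrict to the identity on $\{0\}\times L_{n-1}$ (the two lifts differ by $\sigma$, which is the identity there); Cartan's linearity theorem for the circled domain $L_n$, the $\sigma$-equivariance of $\widetilde F$, and the fact that the linear automorphisms of $L_n$ are only the maps $e^{i\theta}U$ with $U$ real orthogonal then give $\widetilde F\in\{\id,\sigma\}$, hence $F=\id$.

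The genuine gap is the sentence ``By the covering property there is a biholomorphic lift $\widetilde F$.'' This is not a covering-space formality. To lift $F\circ\Lambda_n$ through the covering $\Lambda_n:L_n\setminus\mathcal J_{\Lambda_n}\to\mathbb L_n\setminus\Lambda_n(\mathcal J_{\Lambda_n})$ one must verify the monodromy condition: $F_*$ must preserve the index-two subgroup $H:=(\Lambda_n)_*\pi_1\bigl(L_n\setminus\mathcal J_{\Lambda_n}\bigr)$ of $\pi_1\bigl(\mathbb L_n\setminus\Lambda_n(\mathcal J_{\Lambda_n})\bigr)$, and an automorphism of the base of a $2$-sheeted Galois covering can fail this. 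For instance, for the covering $(z,w)\mapsto(z^2,w)$ of $A(r,1/r)\times A(r,1/r)$ by $A(\sqrt r,1/\sqrt r)\times A(r,1/r)$, the automorphism $(w_1,w_2)\mapsto(w_2,w_1)$ of the base preserves the (empty) branch locus yet admits no lift, because $w_2$ has no global holomorphic square root on an annulus. So the existence of $\widetilde F$ is precisely where the geometry of $\mathbb L_n$ must enter, and your proposal never supplies that input. It can be supplied in two ways. Topologically: show $\pi_1\bigl(\mathbb L_n\setminus\{w_1=0\}\bigr)\cong\mathbb Z$ (project onto the first coordinate; the fibers are convex slices of $L_n$, so the punctured domain is homotopy equivalent to a circle), whence the unique index-two subgroup is characteristic and the lifting criterion holds. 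Or analytically: since $F$ fixes $\{w\in\mathbb L_n:w_1=0\}$ pointwise and $\det F'\neq0$, factor $F_1(w)=w_1G(w)$ with $G$ zero-free on $\mathbb L_n$; as $\mathbb L_n$ is $(2,1,\ldots,1)$-balanced it is contractible, so $G=g^2$ for some holomorphic $g$, and $\widetilde F(z):=\bigl(z_1\,g(\Lambda_n(z)),F_2(\Lambda_n(z)),\ldots,F_n(\Lambda_n(z))\bigr)$ is an explicit lift, biholomorphic because $F^{-1}$ lifts the same way. With either repair your argument is complete for all $n\geq2$; without one of them, the first step of the proof is unsupported.
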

\begin{proof} For $|\omega|=1,$ we define $\rho_{\omega}:\mathbb L_n\owns z\to(\omega^2z_1,\omega z_2,\ldots,\omega z_n)\in\mathbb L_n$ and the function
    \begin{equation*}
        H_{\omega}(z):=F^{-1}(\rho_{\overbar{\omega}}(F(\rho_{\omega}(z))), \; z\in\mathbb L_n.
        \end{equation*}
        By assumption, we get that
    \begin{equation*}
    F^{\prime}(0)=
    \begin{bmatrix}
        a_1 & a_2 & \cdots & a_n\\
        0^{n-1} & &  \mathbb I_{n-1} &
    \end{bmatrix}.
    \end{equation*}
    Note that $a_1\neq 0$. Direct calculations lead to the following equality
    \begin{equation*}
        H_{\omega}^{\prime}(0)=
        \begin{bmatrix}
            1 & \frac{a_2(\overbar{\omega}-1)}{a_1} & \cdots & \frac{a_n(\overbar{\omega}-1)}{a_1}\\
            0^{n-1} & & \mathbb I_{n-1} & 
        \end{bmatrix}.
    \end{equation*}
    Consequently, for any $N\geq 1$ we have
    \begin{equation*}
    (H_{\omega}^{\prime}(0))^N=
      \begin{bmatrix}
            1 & \frac{Na_2(\overbar{\omega}-1)}{a_1} & \cdots & \frac{Na_n(\overbar{\omega}-1)}{a_1}\\
            0^{n-1} & & \mathbb I_{n-1} & 
        \end{bmatrix}.  
    \end{equation*}
    But the domain $\mathbb L_n$ is bounded so the matrices $(H_{\omega}^{\prime}(0))^N$ are uniformly bounded for $N\geq 1$. Therefore, we get that $\frac{\partial F_1}{\partial z_j}(0)=a_j=0$, $j=2,\ldots,n$.
   We write
   \begin{equation*}
       F^{\prime}(0)=\begin{bmatrix}
           a_1 & 0 & \cdots & 0\\
           0^{n-1} & & \mathbb I_{n-1} & 
       \end{bmatrix},
   \end{equation*}
   where $0<|a_1|\leq 1$. 

We prove that $a_1=1$. We denote the generalized Minkowski functional of $\mathbb L_n$ by $M$ (cf. \cite{Nik 2006}). The domain $\mathbb L_n$ is a $(2,1,\ldots,1)$-balanced domain and $M$ is logarithmically plurisubharmonic. For a fixed $z\in \mathbb L_n,$  consider the following function
   \begin{equation*}
       f:\lambda\to \left( \frac{F_1(\lambda^2 z_1,\lambda z_2,\ldots, \lambda z_n)}{\lambda^2}, \frac{F_2(\lambda^2 z_1,\lambda z_2,\ldots, \lambda z_n)}{\lambda}, \ldots, \frac{F_n(\lambda^2 z_1,\lambda z_2,\ldots, \lambda z_n)}{\lambda}\right).
   \end{equation*}
   Note that as $|\lambda|\to 1^-,$ we get that $\limsup_{|\lambda|\to 1} M(f(\lambda))\leq 1$. Since $\frac{\partial F_1}{\partial z_j}(0)=0$, $j=2,\ldots,n$, we get that $f$ at $0$ has the limit equal to
   \begin{equation*}
       (a_1 z_1+P(z^{\prime}),z_2,\ldots,z_n),
   \end{equation*}
   where $P$ is a homogeneous polynomial of degree two (and independent of $z_1$).
   The maximum principle for subharmonic functions gives us that the above expression belongs to $\overline{\mathbb L_n}$. Now consider the function
   \begin{equation*}
   \mathbb L_n\owns z \to (a_1z_1+P(z^{\prime}),z^{\prime})\in\overline{\mathbb L_n},
   \end{equation*}
and since the value of this function at $0$ is $0\in\mathbb L_n,$  the values of this function lie actually in $\mathbb L_n$. We take $N$-iteration of the above mapping which equals to $(a_1^Nz_1+NP(z^{\prime}),z^{\prime})\in\mathbb L_n$. It implies that $P(z^{\prime})=0$, $z^{\prime}\in L_{n-1}$, which shows that $(a_1z_1,z^{\prime})\in \mathbb L_n$ for any $(z_1,z^{\prime})\in \mathbb L_n.$ Hence, $a_1=1$. 

Since $F^{\prime}(0)=\mathbb I_n,$ Cartan theorem implies that $F$ is the identity.
\end{proof}

\begin{theorem}\label{theorem:automorphisms}
        $\Aut(L_{n-1}) \cong \Aut(\mathbb L_n)$ for $n\geq3.$
\end{theorem}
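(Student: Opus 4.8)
The plan is to build two mutually inverse group homomorphisms between $\Aut(L_{n-1})$ and $\Aut(\mathbb L_n)$ and to reduce the whole statement to one geometric fact: that every $F\in\Aut(\mathbb L_n)$ maps the critical value set $\{0\}\times L_{n-1}$ onto itself. Granting this invariance, I would define the restriction homomorphism $\Phi\colon\Aut(\mathbb L_n)\to\Aut(L_{n-1})$ by $\Phi(F):=F|_{\{0\}\times L_{n-1}}$ (identifying $\{0\}\times L_{n-1}$ with $L_{n-1}$) and the extension homomorphism $E\colon\Aut(L_{n-1})\to\Aut(\mathbb L_n)$ furnished by Theorem~\ref{theorem:extension}. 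The map $E$ is single-valued: two extensions of the same $\alpha$ agree on $\{0\}\times L_{n-1}$, so their composite inverse fixes that set pointwise and hence is the identity by Lemma~\ref{lemma:uniqueness}. The same lemma gives $\ker\Phi=\{\id\}$, Theorem~\ref{theorem:extension} gives surjectivity of $\Phi$, and the uniqueness clause forces $\Phi\circ E=\id$ and $E\circ\Phi=\id$. Both maps are then bijective homomorphisms and $\Aut(L_{n-1})\cong\Aut(\mathbb L_n)$ follows for $n\geq 3$.

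The entire weight therefore rests on the invariance of $\{0\}\times L_{n-1}$, and this is exactly where the inhomogeneity of $\mathbb L_n$ enters. I would give an intrinsic, automorphism-invariant description of the critical value set through the Bergman kernel by setting
\[
Z:=\{q\in\mathbb L_n:\ K_{\mathbb L_n}(\cdot,q)\ \text{has no zero on}\ \mathbb L_n\}.
\]
First, $Z$ is $\Aut(\mathbb L_n)$-invariant: for $F\in\Aut(\mathbb L_n)$ the transformation rule $K_{\mathbb L_n}(z,q)=J_F(z)\,\overline{J_F(q)}\,K_{\mathbb L_n}(F(z),F(q))$ together with the non-vanishing of the Jacobian $J_F$ shows that $K_{\mathbb L_n}(\cdot,q)$ vanishes somewhere if and only if $K_{\mathbb L_n}(\cdot,F(q))$ does, so $q\in Z\iff F(q)\in Z$. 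Second, $0\in Z$, since formula~\eqref{formula} gives $K_{\mathbb L_n}(z,0)=\overline{K_{\mathbb L_n}(0,z)}=n\neq0$. Finally, the extension automorphisms of Theorem~\ref{theorem:extension} act transitively on $\{0\}\times L_{n-1}$ (the Lie ball $L_{n-1}$ is homogeneous), so invariance of $Z$ upgrades $0\in Z$ to the inclusion $\{0\}\times L_{n-1}\subseteq Z$.

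The hard part, which I expect to be the main obstacle, is the reverse inclusion $Z\subseteq\{0\}\times L_{n-1}$: for every $q=(q_1,q')\in\mathbb L_n$ with $q_1\neq0$ one must exhibit a zero of $K_{\mathbb L_n}(\cdot,q)$. Proposition~\ref{Lu Qi-Keng} supplies such a zero only at the special points used there, so the point is to propagate it to all off-locus $q$. I would argue directly from~\eqref{formula}: writing $q=\Lambda_n(w)$ with $w_1\neq0$ and taking $u=\Lambda_n(z)$ as the variable, the numerator vanishes precisely when $\bigl(\tfrac{X_n+A}{X_n-A}\bigr)^{n}=1$ with $A=2z_1\overline{w_1}\neq0$ (equivalently $A/X_n=\tfrac{\omega-1}{\omega+1}$ for an $n$-th root of unity $\omega\neq1$), which is exactly the algebraic condition driving Proposition~\ref{Lu Qi-Keng}; one then checks that for each admissible $w$ this ratio is realized by some $z$ with $\Lambda_n(z)\in\mathbb L_n$. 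Verifying this realization for arbitrary $q'$ (rather than only on the slice $w'=0$ used in Proposition~\ref{Lu Qi-Keng}) is the one genuinely technical step, and it is precisely the quantitative form of inhomogeneity (Corollary~\ref{homogennot}) combined with the failure of the Lu Qi-Keng property.

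Putting the two inclusions together yields $Z=\{0\}\times L_{n-1}$, and since $Z$ is invariant we obtain $F(\{0\}\times L_{n-1})=F(Z)=Z=\{0\}\times L_{n-1}$ for every $F\in\Aut(\mathbb L_n)$. This is exactly the invariance assumed in the first paragraph, so $\Phi$ and $E$ are well defined and mutually inverse, completing the proof. (As a contingency, should the off-locus kernel computation prove awkward, an alternative route to the same invariance is to lift $F$ through the $2$-proper map $\Lambda_n$ via Proposition~\ref{isomorphic}, obtaining $\widehat F\in\Aut(L_n)$ commuting with the deck reflection $z\mapsto(-z_1,z')$ and hence preserving its fixed-point set $\{0\}\times L_{n-1}$; this, however, requires controlling the biholomorphism produced by the factorization.)
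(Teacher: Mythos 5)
Your reduction of the theorem to a single geometric fact --- that every $F\in\Aut(\mathbb L_n)$ maps $\{0\}\times L_{n-1}$ onto itself --- together with the way you assemble the isomorphism from Theorem~\ref{theorem:extension} and Lemma~\ref{lemma:uniqueness}, is exactly the paper's skeleton and that part is correct. The genuine gap is in the invariance step itself. Your set $Z=\{q\in\mathbb L_n:\ K_{\mathbb L_n}(\cdot,q)\ \text{is zero-free}\}$ is indeed $\Aut(\mathbb L_n)$-invariant and contains $\{0\}\times L_{n-1}$, but the inclusion your argument hinges on, $Z\subseteq\{0\}\times L_{n-1}$, is never proved and does not follow from anything available. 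Proposition~\ref{Lu Qi-Keng} exhibits a zero of the kernel only for one special pair of points, both lying on the slice $z'=w'=0$; what you need is that $K_{\mathbb L_n}(\cdot,q)$ vanishes somewhere for \emph{every} $q$ with $q_1\neq0$, a far stronger assertion. Concretely, writing $q=\Lambda_n(w)$, you must realize $2z_1\overline{w_1}/X_n=i\tan(k\pi/n)$ by some $z\in L_n$; already on the slice $z'=0$ this is a quadratic in $z_1$ whose two roots have moduli with product $1/|w\bullet w|>1$, so a root inside the unit disc is not automatic, and for general $w'$ nothing controls this range problem. Your appeal to ``the quantitative form of inhomogeneity'' is also backwards: in this paper inhomogeneity (Corollary~\ref{homogennot}) is \emph{deduced} from the existence of a kernel zero together with $K_{\mathbb L_n}(0,\cdot)$ being a nonzero constant; it gives no information about zeros of $K_{\mathbb L_n}(\cdot,q)$ for arbitrary $q$. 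The fallback route via lifting $F$ through $\Lambda_n$ suffers from a circularity you partly acknowledge: to set up the covering (or factorization) argument one needs to know the critical value set of $F\circ\Lambda_n$, which is $F(\{0\}\times L_{n-1})$ --- the very set you are trying to locate.

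The paper obtains the invariance by a softer argument that avoids the Bergman kernel altogether. Let $V$ be the orbit of the origin under $\Aut(\mathbb L_n)$. The extension automorphisms of Theorem~\ref{theorem:extension} act transitively on $\{0\}\times L_{n-1}$ (because $\Aut(L_{n-1})$ is transitive), so $\{0\}\times L_{n-1}\subseteq V$; inhomogeneity (Corollary~\ref{homogennot}) gives $V\subsetneq\mathbb L_n$; and by Kaup's theorem \cite[Proposition 1]{Kaup 1970} the orbit $V$ is a closed connected complex submanifold of $\mathbb L_n$. A closed connected complex submanifold properly contained in $\mathbb L_n$ and containing the $(n-1)$-dimensional closed submanifold $\{0\}\times L_{n-1}$ must coincide with it, and then for any $F\in\Aut(\mathbb L_n)$ and any point $(0,z')=h(0,\ldots,0)$ with $h$ an extension automorphism one gets $F(0,z')=F(h(0,\ldots,0))\in V=\{0\}\times L_{n-1}$, which is the desired invariance. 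So if you want to salvage your kernel-theoretic route you would have to establish $Z=\{0\}\times L_{n-1}$ outright, which appears harder than the theorem itself; replacing $Z$ by the orbit of the origin turns your argument into the paper's proof.
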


\begin{proof}
\textcolor{black}{We showed in Theorem~\ref{theorem:extension} that every automorphism $h$ of $\{0\} \times L_{n-1}$ extends to an automorphism $F_h$ of $\mathbb L_n.$  Let $A_n =\{F_h \in \Aut(\mathbb L_n): h \in \Aut(L_{n-1}) \}.$} 

    Now we prove that each automorphism of $\mathbb L_n$ gives rise to an automorphism of $L_{n-1}.$ 
    Let us denote $\bold 0 := (0,\ldots,0) \in L_n$ and $V:=\{F(\bold 0) : F \in \Aut(\mathbb L_n)\}.$ \textcolor{black}{We note that $\mathcal J_{\Lambda_n} =  \{0\}\times L_{n-1}$ and $\Lambda_n(\mathcal J_{\Lambda_n}) = \mathcal J_{\Lambda_n}$ which is a subset of $\mathbb L_n.$  $A_n$ acts transitively on $\Lambda_n(\mathcal J_{\Lambda_n}),$ thus for every $\bl z \in \Lambda_n(\mathcal J_{\Lambda_n})$ there exists an automorphism $h_{\bl z}$ in $A_n \subseteq \Aut(\mathbb L_n)$ such that $h_{\bl z}(\bl 0) = \bl z.$ Therefore, $\{0\}\times L_{n-1}=\Lambda_n(\mathcal J_{\Lambda_n}) \subseteq V.$ Also, $\Aut(\mathbb L_n)$ acts transitively on $V.$  Since $\Aut(\mathbb L_n)$ does not act transitively on $\mathbb L_n$ (cf. Corollary~\ref{homogennot}), so $V \subsetneq \mathbb L_n.$ The set $V$ is a closed connected complex submanifold of $\mathbb L_n$ \cite[Proposition 1]{Kaup 1970} and $\{0\}\times L_{n-1} = \Lambda_n(\mathcal J_{\Lambda_n}) \subseteq V\subsetneq \mathbb L_n.$ } Hence, $V=\{0\}\times L_{n-1}.$ 
    
   \textcolor{black}{ We take $(0,z_2,\ldots,z_n) \in \{0\}\times L_{n-1}=\Lambda_n(\mathcal J_{\Lambda_n})$ and $(0,z_2,\ldots,z_n) = h(\bold 0)$ for some $h \in A_n.$ Now for all $F \in \Aut(\mathbb L_n),$ $F(0,z_2,\ldots,z_n)= F\circ h(\bold 0) \in V = \{0\}\times L_{n-1}.$ Thus we conclude $F(\Lambda_n(\mathcal J_{\Lambda_n})) = \Lambda_n(\mathcal J_{\Lambda_n})$ for every $F \in \Aut(\mathbb L_n).$} Then for every $F \in \Aut(\mathbb L_n),$ the restriction $F|_{\Lambda_n(\mathcal J_{\Lambda_n})}$ is an automorphism of $\Lambda_n(\mathcal J_{\Lambda_n}).$ Then the result follows from Lemma~\ref{lemma:uniqueness} and Remark~\ref{remark:cartan}.
\end{proof}

\section{Concluding remarks and open questions}\label{conclu} Let us summarize the problems which are potent to be the subject of future research. Some of those
 have already been mentioned in the paper.
\begin{remark} 
\begin{enumerate}
\item[1.]
It is tempting that the Lempert theorem could be proven (at least in the case when one of the points is $0$) for $\mathbb L_n$ (compare \cite{Abo-You-Whi 2007}). Probably the general case of the Lempert theorem would be much more difficult (compare the proof of the case $n=3$ in \cite{Edi-Kos-Zwo 2013}). One of the possible methods would be a construction of candidates for left inverses (some rational functions?).

\item[2.] Another problem would be to find a $\mu$-synthesis approach that would produce as a result the domains $\mathbb L_n$ as it is the case of the symmetrized bidisc and the tetrablock.

\item[3.] Solve the $3$-point Nevanlinna-Pick problem for classical Cartan domains or at least for the domains $L_n$ (compare \cite{Kos 2015} and \cite{Kos-Zwo 2018}). 

\item[4. ] Can the domains $\mathbb L_n$ (or their biholomorphic images) play an essential role in the study of (complete) spectral sets or in other areas of the operator theory as the symmetrized bidisc and the tetrablock do?  

\item[5. ] Another problem is to provide a geometric characterization of the domain $\mathbb L_n$ by the structure of its automorphism group (which is equal to $\operatorname{Aut}(L_{n-1})$) and some other geometric assumptions. Recall that a similar characterization can be found in \cite{Agl-Lyk-You 2019} for the symmetrized bidisc (that is $\mathbb L_2$).  
\end{enumerate}
\end{remark}

\subsection*{Acknowledgements} The authors are indebted to the anonymous referees for a careful reading of the manuscript and for suggesting a detailed list of valuable changes which helped in improving the presentation of the article.

\end{document}